\newtheorem{teor}{Theorem}[section]
\newtheorem{defin}[teor]{Definition}
\newtheorem{lemm}[teor]{Lemma}
\newtheorem{osse}[teor]{Remark}
\newtheorem{prop}[teor]{Proposition}
\newtheorem{defi}[teor]{Definition}
\newtheorem{coro}[teor]{Corollary}
\newtheorem{prob}[teor]{Problem}
\newtheorem{assu}[teor]{Assumption}
\newcommand{\bele}{\begin{lemm}\begin{sl}}
\newcommand{\enle}{\end{sl}\end{lemm}}
\newcommand{\bedef}{\begin{defi}\begin{sl}}
\newcommand{\eddef}{\end{sl}\end{defi}}
\newcommand{\bete}{\begin{teor}\begin{sl}}
\newcommand{\ente}{\end{sl}\end{teor}}
\newcommand{\beos}{\begin{osse}\begin{rm}}
\newcommand{\eddos}{\end{rm}\end{osse}}
\newcommand{\beas}{\begin{assu}\begin{rm}}
\newcommand{\eddas}{\end{rm}\end{assu}}
\newcommand{\bepr}{\begin{prop}\begin{sl}}
\newcommand{\empr}{\end{sl}\end{prop}}
\newcommand{\bepro}{\begin{prob}\begin{rm}}
\newcommand{\empro}{\end{rm}\end{prob}}
\newcommand{\bede}{\begin{defin}\begin{sl}}
\newcommand{\edde}{\end{sl}\end{defin}}
\newcommand{\beco}{\begin{coro}\begin{sl}}
\newcommand{\enco}{\end{sl}\end{coro}}
\newcommand{\quext}{\quad\text}
\newcommand{\qquext}{\qquad\text}
\newcommand{\de}{\partial}
\newcommand{\RR}{\mathbb{R}}
\newcommand{\NN}{\mathbb{N}}
\newcommand{\N}{{\cal N}}
\newcommand{\beeq}[1]{\begin{equation}\label{#1}}
\newcommand{\eddeq}{\end{equation}}
\newcommand{\beeqa}[1]{\begin{eqnarray}\label{#1}}
\newcommand{\eddeqa}{\end{eqnarray}}
\newcommand{\beal}[1]{\begin{align}\label{#1}}
\newcommand{\eddal}{\end{align}}
\newcommand{\bespl}[1]{\begin{split}\label{#1}}
\newcommand{\edspl}{\end{split}}
\newcommand{\bega}[1]{\begin{gather}\label{#1}}
\newcommand{\edga}{\end{gather}}
\newcommand{\beeqax}{\begin{eqnarray*}}
\newcommand{\eddeqax}{\end{eqnarray*}}
\def\qed{\ifmmode 
  \else \leavevmode\unskip\penalty9999 \hbox{}\nobreak\hfill
  \fi
  \quad\hbox{\hskip.5em\vrule width.4em height.6em depth.05em\hskip.1em}}
\def\endproofsym{\qed}
\renewenvironment{proof}[1][Proof]{\trivlist\item[\hskip\labelsep{\hskip0pt
    {\normalfont\scshape#1.}\hskip .321429\parindent}]\ignorespaces}
{\endproofsym\endtrivlist}
\def\endnobox{\def\endproofsym{}\end{proof}\def\endproofsym{\qed}}
\newcommand{\no}{\nonumber}
\newcommand{\beeqao}{\begin{eqnarray}\no}
\newcommand{\bealo}{\begin{align}\no}
\newcommand{\besplo}{\begin{split}\no}
\newcommand{\begao}{\begin{gather}\no}
\def\<#1>{\mathopen\langle #1\mathclose\rangle}
\def\Vp{{V^*}}
\def\genspazio #1#2#3#4#5{#1^{#2}(#5,#4;#3)}
\def\spazio #1#2#3{\genspazio {#1}{#2}{#3}T0}
\def\L {\spazio L}
\def\H {\spazio H}
\def\W {\spazio W}
\def\Lx #1{L^{#1}(\Omega)}
\def\Hx #1{H^{#1}(\Omega)}
\newcommand{\eps}{\varepsilon}
\newcommand{\duav}[1]{\langle{#1}\rangle}
\newcommand{\duavg}[1]{\left\langle{#1}\right\rangle}
\newcommand{\itt}{\int_0^t}
\newcommand{\io}{\int_\Omega}
\newcommand{\epsi}{\varepsilon}
\newcommand{\OO}{_{\Omega}}
\newcommand{\oo}{_{\Omega}}
\def\R{\mathbb R}
\newcommand{\bn}{\boldsymbol{n}}
\newcommand{\dn}{\partial_{\bn}}
\newcommand{\fhi}{\varphi}
\newcommand{\lhs}{left-hand side}
\newcommand{\rhs}{right-hand side}
\DeclareMathOperator{\dive}{div}
\DeclareMathOperator{\deriv}{d}
\DeclareMathOperator{\sign}{sign}
\newcommand{\LDH}{L^2(0,T;H)}
\newcommand{\LDV}{L^2(0,T;V)}
\newcommand{\LIH}{L^\infty(0,T;H)}
\newcommand{\LIV}{L^\infty(0,T;V)}
\newcommand{\LDHD}{L^2(0,T;H^2(\Omega))}
\let\TeXchi\chi
\def\chi{{\setbox0 \hbox{\mathsurround0pt
$\TeXchi$}\hbox{\raise\dp0 \copy0 }}}
\newcommand{\calH}{{\mathcal H}}
\newcommand{\calF}{{\mathcal F}}
\newcommand{\calE}{{\mathcal E}}
\newcommand{\calJ}{{\mathcal J}}
\newcommand{\calN}{{\mathcal N}}
\newcommand{\calV}{{\mathcal V}}
\newcommand{\calZ}{{\mathcal Z}}
\newcommand{\dit}{\deriv\!t}
\newcommand{\dis}{\deriv\!s}
\newcommand{\dir}{\deriv\!r}
\newcommand{\ddt}{\frac{\deriv\!{}}{\dit}}
\newenvironment{giuliorev}{\color{magenta}}{\color{black}}
\newcommand{\III}{\begin{giuliorev}}
\newcommand{\EEE}{\end{giuliorev}}
\definecolor{rosso}{rgb}{0.85,0,0}
\def\norma #1{\mathopen \| #1\mathclose \|}
\def\II {\mathbb{I}}
\def\cd{c_{\d}}
\def\d{\delta}
\def\Wn {{\cal W_{\bn}}}
\def\Accorpa #1#2 #3 {\gdef #1{\eqref{#2}-\eqref{#3}}%
  \wlog{}\wlog{\string #1 -> #2 - #3}\wlog{}}
\def\mobm{{\mathbbm{m}}}
\def\mobn{{\mathbbm{n}}}
\numberwithin{equation}{section}
\begin{document}

\title{On a Cahn--Hilliard--Keller--Segel model with generalized logistic source 
describing tumor growth}

\author{Elisabetta Rocca\\
Dipartimento di Matematica, Universit\`a di Pavia {and IMATI - C.N.R.},\\
Via Ferrata~5, I-27100 Pavia, Italy\\
E-mail: {\tt elisabetta.rocca@unipv.it}
\and
Giulio Schimperna\\
Dipartimento di Matematica, Universit\`a di Pavia {and IMATI - C.N.R.},\\
Via Ferrata~5, I-27100 Pavia, Italy\\
E-mail: {\tt giusch04@unipv.it}
\and
Andrea Signori\\
Dipartimento di Matematica, Universit\`a di Pavia,\\
Via Ferrata~5, I-27100 Pavia, Italy\\
E-mail: {\tt andrea.signori01@unipv.it}
}

\date{}

\maketitle
\begin{abstract}
 \noindent
 We propose a new type of diffuse interface model describing the evolution of a tumor mass
 under the effects of a chemical substance (e.g., a nutrient or a drug). The process is 
 described by utilizing the variables $\fhi$, an order parameter representing the local proportion of 
 tumor cells, and $\sigma$, representing the concentration of the chemical. 
 The order parameter $\fhi$ is assumed to satisfy a suitable form of the Cahn--Hilliard equation
 with mass source and logarithmic potential of Flory--Huggins type (or generalizations
 of it). The chemical concentration $\sigma$ satisfies a reaction-diffusion 
 equation where the cross-diffusion term has the same expression
 as in the celebrated Keller--Segel model. In this respect, the model we propose represents a 
 new coupling between the Cahn--Hilliard equation and a subsystem of the Keller--Segel model.
 We believe that, compared to other models, this choice is more effective in capturing the 
 chemotactic effects that may occur in tumor growth dynamics (chemically induced
 tumor evolution and consumption of nutrient/drug by tumor cells). Note that, in order 
 to prevent finite time blowup of $\sigma$, we assume a chemical source term of 
 logistic type. 
 Our main mathematical result is devoted to proving existence of weak solutions 
 in a rather general setting that covers both the two- and three- dimensional cases. 
 Under more restrictive assumptions on coefficient and data, and in some cases on the
 spatial dimension, we prove various regularity results. Finally, 
 in a proper class of smooth solutions we show uniqueness 
 and continuous dependence on the initial data in a number of significant cases. 
\end{abstract}

\noindent {\bf {Keywords:}}~~Cahn--Hilliard--Keller--Segel, chemotaxis, cross-diffusion, tumor growth, 
singular potential, well-posedness.

\vspace{2mm}

\noindent {\bf AMS (MOS) subject clas\-si\-fi\-ca\-tion:}  35D30, 
	    35K35, 
	    35K86, 
	    35Q92, 
        92C17, 
	    92C50. 

\vspace{2mm}


\section{Introduction}
\label{sec:intro}

Let $\Omega \subset \R^d$, $d \in \{2,3\}$, be a smooth and bounded domain,
and let $T>0$ be an assigned final time. 
In this paper, we consider the following Cahn--Hilliard-Keller--Segel (CHKS) 
model aimed at describing some classes of tumor growth processes:
\begin{alignat}{2}
 \label{CH1}
  & \fhi_t - \dive\big(\mobm(\fhi,\sigma) \nabla \mu\big) 
    = S(\fhi,\sigma) && \qquad \text{ in }\, Q:= \Omega \times (0,T),\\
 \label{CH2}
  & \mu = - \eps \Delta \fhi + \eps^{-1}  f(\fhi) - \chi\sigma && \qquad \text{ in }\, Q,\\
 \label{nutr}
  & \sigma_t -\dive \big(\sigma \mobn (\fhi,\sigma)  \nabla (\ln \sigma + \chi (1-\fhi) )\big) 
   = b (\fhi, \sigma) && \qquad \text{ in }\, Q,\\
 \label{BC}
  & \dn \fhi = (\mobm(\fhi,\sigma) \nabla \sigma) \cdot \bn
    = (\mobn (\fhi,\sigma)  \nabla (\ln \sigma + \chi (1-\fhi) ))  
    \cdot \bn = 0 && \qquad \text{ on }\, \Sigma := \partial\Omega \times (0,T),\\
  \label{init}
  & \fhi|_{t=0} = \fhi_0, \quad
  \sigma|_{t=0} = \sigma_0 
  && \qquad \text{ in }\, \Omega.
\end{alignat}
\Accorpa\SYS {CH1} {init}
Equations \eqref{CH1}-\eqref{CH2} correspond to a generalized 
version of the Cahn--Hilliard (CH)
system with mass source for the two unknown variables $\fhi$ and $\mu$. 
Here, $\fhi$ denotes an order parameter, or phase-field, representing
the difference between the tumor cells and healthy cells volume fractions,
and is normalized in such a way that, at least in principle,
the level sets $\{\fhi=1\}:= \{x \in \Omega : \fhi (x) = 1\}$ 
and $\{\fhi = -1\}$ describe the regions occupied by the 
pure (``tumor'' and ``healthy'') phases,
respectively. These regions are separated by a narrow transition layer
of thickness scaling as $\eps\in(0,1)$, in which $\{-1< \fhi< 1\}$. 
As we will specify below, the fact that $\fhi$ takes value in the reference
interval $[-1,1]$ is enforced by the occurrence of the function
$f$ in \eqref{CH2}, which represents the derivative of what, in the Cahn--Hilliard
terminology, is generally noted as a ``singular (configuration) potential''.
The variable $\mu$ is an auxiliary quantity denoting the chemical potential 
of the phase separation process. Since in tumor growth processes the total mass
of the tumor is not conserved, we also assume the occurrence 
of a volumic source term $S$ on the \rhs\ of \eqref{CH1}.
We shall comment on the precise expression of $S$ later on.

The Cahn--Hilliard system \eqref{CH1}-\eqref{CH2}  (cf.~\cite{CH}) is coupled with 
the reaction-diffusion equation \eqref{nutr} describing the effects of a 
chemical substance on the evolution of the tumor. This may be 
a nutrient like oxygen or glucose which constitutes the primary source 
of nourishment for the tumor cells, as well as a drug or a medicine 
preventing the tumor to grow. In either case, the concentration of 
such a substance is represented by the variable $\sigma$. 
We shall extensively comment below on the expression of equation \eqref{nutr}.
The functions $\mobm(\fhi,\sigma)$ and $\mobn(\fhi,\sigma)$ in \eqref{CH1} 
and \eqref{nutr} are nonnegative mobility functions related to the phase-field 
and the nutrient concentration, respectively. The system 
is complemented with the Cauchy conditions \eqref{init}
and with the no-flux (i.e., homogeneous Neumann)
boundary conditions \eqref{BC}, where $\bn$ is the outer unit normal vector 
to $\de\Omega$.

Diffuse interface models for tumor growth are now receiving a notable
attention among the scientific community and the recent mathematical literature
is very vast (we may quote, with no claim of completeness
\cite{CGH, CGRS1, CGRS2, Ciarletta, DFRSS17, FGR, FLR,  GLDirichlet, GLNeumann, GLSS, HZO, HMV, HKNZ15, H1, H2, GLRS, GLS, SS},
see also the reference therein). Actually, most of the models considered in these papers turn out to couple
a Cahn--Hilliard relation for the tumor cell proportion (which may be of 
multi-phase type if more than two types of cells are considered, 
cf., e.g., \cite{FLRS, KS2, GAR}) 
with other equations describing the behavior of further significant 
quantities, like nutrient concentration (as in our case), macroscopic
velocity, or even temperature \cite{Ipo}. 

Compared to previous tumor growth models of the same type (i.e., based
on the coupling of the Cahn--Hilliard system with a reaction-diffusion equation),
the main novelty in our system~\SYS\ is represented by the 
expression of the reaction-diffusion equation \eqref{nutr}, which is also
what led us to use the terminology ``Cahn--Hilliard--Keller--Segel model''.
In this direction, we are aware of the recent contribution \cite{EPP}, where 
a connection between a generalized form of the Keller--Segel system and a 
relaxed version of the Cahn--Hilliard system is rigorously shown 
through a suitable limiting procedure).
In a sense, the biological effect we would like to represent is {\sl chemotaxis}, 
basically corresponding to the active movement, in a biological sense,
of the tumor cells towards regions of high nutrient concentration. Considering for 
simplicity the case of a constant mobility $\mobn\equiv1$, in previous 
models (see, e.g., \cite{GLSS}), this ``active transport'' effect was described
utilizing a relation of the form
\begin{equation}\label{nutr:old}
  \sigma_t - \Delta \sigma + \chi \Delta \fhi = b (\fhi, \sigma),
\end{equation}
where $b$ is, as in our case, a volumic nutrient source. However, relation 
\eqref{nutr:old}, which is mathematically simpler compared to 
\eqref{nutr}, in our view seems to present several drawbacks from a modeling
perspective. First of all, in view of the fact that the term $\chi \Delta\fhi$ 
has no sign properties, \eqref{nutr:old} does not obey the minimum principle;
hence, one cannot exclude, at least in principle, that the variable
$\sigma$ might somewhere assume strictly negative values 
conflicting with the physical interpretation of $\sigma$ as a concentration.
A further issue can be observed if one integrates \eqref{nutr:old} on 
a reference volume $V\subset\Omega$. Indeed, applying the Gauss--Green
formula, one then obtains
\begin{equation}\label{nutr:V}
  \ddt \int_V \sigma = \int_{\partial V} \dn \sigma
   + \int_V b(\fhi,\sigma) - \chi \int_{\partial V} \dn \fhi,
\end{equation}
and we may notice that the last integral prescribes that 
the variation of $\sigma$ in $V$ depends on the flux
of tumor cells across $\de V$, {\sl independently of}\/ the value of $\sigma$. 
For instance, if many tumor cells ($\fhi\sim 1$) are present outside $V$
and fewer ones ($\fhi\sim-1$) occur inside $V$ (so that $\dn \fhi$ is positive),
then there is a nutrient flux from the inside to the outside of $V$, 
but this flux is in fact independent of the actual nutrient concentration.

On the other hand, if \eqref{nutr:old} is replaced by our \eqref{nutr}, then
(still in the case $\mobn\equiv1$), \eqref{nutr:V} assumes the different form
\begin{equation}\label{nutr:V2}
  \ddt \int_V \sigma = \int_{\partial V} \dn \sigma
   + \int_V b(\fhi,\sigma) - \chi \int_{\partial V} \sigma \dn \fhi,
\end{equation}
where, as physically expected, the nutrient flux across $\de V$ driven by
consumption by tumor cells is proportional to the actual value of 
$\sigma$: the more nutrient is present, the more it flows away.
This is, indeed, the main reason that led us to consider the present expression
for the equation \eqref{nutr}.

It is clear that the above choice, corresponding in the constant
mobility case to the equation
\begin{equation}\label{nutr:simpl}
  \sigma_t - \Delta \sigma + \chi \dive (\sigma \nabla \fhi) = b (\fhi, \sigma),
\end{equation}
gives rise to a number of mathematical complications mainly due to the 
quadratic behavior of the cross-diffusion term. This is, indeed, one of 
the main sources 
of difficulty in the mathematical analysis of the Keller--Segel (KS) model \cite{KS}.
Despite the vastness of the mathematical literature dealing 
with the KS model (cf., e.g., \cite{BPCCZ,FLP, JL, Wri1,Wri2, Wri3}),
it is worth noting that, up to our knowledge, this
is the first paper where the coupling between a ``Keller--Segel-like'' expression
of the form \eqref{nutr} (or \eqref{nutr:simpl}) with the Cahn--Hilliard
system is considered. From a modeling perspective, while in true 
Keller--Segel models, a relation like \eqref{nutr} is combined 
with a {\sl second order}\/ reaction-diffusion equation describing the 
evolution of a further {\sl concentration}, in the present coupling,
relation \eqref{nutr} is coupled with a {\sl fourth order}\/ equation 
describing the evolution of a {\sl proportion}, i.e., of a {\sl normalized}\/
variable, the order parameter $\fhi$. This new type of coupling has some 
implications both on the regularity of solutions and on the mathematical
techniques we use to address the system. For instance, we may notice that,
compared to the case when the coupling variable $\fhi$ satisfies a second
order relation (like in the true KS model), here $\fhi$ enjoys 
{\sl more regularity}\/ in space, but {\sl less regularity}\/ in time. 
This leads to some modifications of the regularity scenario and 
of the expected properties of solutions compared to the standard
KS case.

It is worth noting that, as also happens in the KS model, the regularity obtained 
by the a-priori estimate corresponding to the energy balance 
principle (the variational formulation of the model starting from the free energy 
balance is presented below) seems not sufficient to prevent finite time blowup of the solution,
unless the mass source term $b$ in \eqref{nutr} is suitably designed.
In particular, as is habitual in the Keller--Segel context, we have to assume 
$b$ to present a ``generalized logistic growth'' property (see the next section
for the precise assumption); namely, it goes like $\sigma$ for $\sigma\sim 0$
(so to preserve the minimum principle), while it behaves as $-\sigma^p$ 
(for suitable $p>1$, with the reference case given by $p=2$ corresponding
to a ``true'' logistic growth) for large $\sigma$ (see \cite{HH, Wri1, Wri2} 
for examples of Keller--Segel models with logistic growth). 
With this choice, relation \eqref{nutr:V2}
prescribes that, if the nutrient concentration is high, then there 
occurs a volumic effect leading it to decrease. We believe this property be
biologically reasonable, in addition to being probably unavoidable mathematically.

As anticipated above, system~\SYS\ could be variationally derived from 
the free energy functional
\begin{align}\label{freeenergy}
  {\cal F} (\fhi, \sigma)
    = \underbrace{\frac \eps2 \int_\Omega |\nabla \fhi |^2 + \frac 1 \eps\io F(\fhi)}_{=:{\cal E}(\fhi)}  
      + \underbrace{\io \big( \sigma (\ln \sigma - 1) 
      + \chi  \sigma {(1-\fhi)} \big) }_{=: {\cal M}(\fhi,\sigma)},
\end{align}
where $F$ is an antiderivative of $f$. In particular, equation \eqref{CH1} is obtained as a balance
law by setting
\begin{align*}
  \fhi_t + \dive {\bf J}_\fhi = S(\fhi,\sigma),
\end{align*}
where, as is typical for the Cahn--Hilliard equation, the flux ${\bf J}_\fhi$ 
is prescribed as ${\bf J}_\fhi= - \mobm(\fhi,\sigma) \nabla \mu$ for a mobility 
function $\mobm(\fhi,\sigma)$, and where the chemical potential $\mu$ is 
defined as the variational derivative of the free energy with respect to the order parameter,
namely $\mu:= \delta {\cal F}/\delta \fhi$.
Note that also equation \eqref{nutr} can be obtained as a 
balance law for the nutrient flux ${\bf J}_\sigma $, i.e.,
\begin{align*}
   \sigma_t + \dive {\bf J}_\sigma = b(\fhi,\sigma), \quad 
     \text{with }\,{\bf J}_\sigma := - \sigma \mobn(\fhi,\sigma) \nabla \mu_\sigma, \quad 
      \mu_\sigma  :=  \frac {\delta {\cal F}}{\delta \sigma}
                = \frac {\delta {{\cal M}}}{\delta \sigma} = \ln \sigma + \chi (1-\fhi),
\end{align*}
where the mobility function has the expression $\sigma \mobn(\fhi,\sigma)$, hence,
in particular, degenerates (in fact linearly) as $\sigma\searrow 0$
(so guaranteeing the minimum principle).

The above expression \eqref{freeenergy} for the free energy permits us 
to remark a further peculiarity of the present model. This is related 
to the coercivity of $\calF$, which is linked to the choice of 
a ``singular potential'' $F$, with the 
most usual choice in the Cahn--Hilliard literature being given by the 
Flory--Huggins ``logarithmic potential'' given by
\begin{equation}\label{Flog}
  F(r) = (1+r)\log(1+r)+(1-r)\log(1-r)-\frac\lambda2 r^2, \quad r\in[-1,1],
  \quad \lambda \ge 0.
\end{equation}
For the standard Cahn--Hilliard model the expression
\eqref{Flog} represents a source of mathematical difficulties 
(cf., e.g., \cite{MAIMS}), due to its singular character,
and, for this reason, it is often replaced by a double well potential of controlled
growth like, e.g., $F_{{\rm reg}}(r) = (r^2 - 1)^2$. Here, instead,
the singular character of $F$ {\sl helps}\/ us to get coercivity of the energy
functional, and in particular to control the coupling term 
(i.e., the last summand in \eqref{freeenergy}).

Notice also that such a difficulty does not occur when the nutrient
equation has the form~\eqref{nutr:old}. Indeed, in that case the 
free energy takes the expression
\begin{align*}
  {\cal F}_2 (\fhi, \sigma)
    = \frac \eps2 \int_\Omega |\nabla \fhi |^2 + \frac 1 \eps\io F(\fhi)
      + \io \Big( \frac12 \sigma^2 + \chi  \sigma (1-\fhi) \Big),
\end{align*}
which keeps its coercivity because of the contribution of $\sigma^2$ (note also
that, in this case, the variational derivation of the model is similar, but
one has to consider a mobility of the form $\mobn(\sigma,\fhi)$
rather than $\sigma \mobn(\sigma,\fhi)$).

We also have to observe a further difficulty occurring in Cahn--Hilliard models
with mass source and singular potentials like \eqref{Flog}. Namely, the 
forcing term $S$ in \eqref{CH1} has to be designed in such a way to prevent
the spatial average of $\fhi$ to become larger than $1$ or smaller than $-1$,
which would be inconsistent with \eqref{CH2}. Indeed, the mass balance
(i.e., the evolutionary law ruling the spatial average of $\fhi$)
only depends on \eqref{CH1}, but at the same time
its outcome must be consistent with \eqref{CH2}.
Following the lines of \cite{FLRS}, we actually assume $S(\fhi,\sigma) 
= -m \fhi + h(\fhi,\sigma)$, where $m>0$ is ``large'' compared to the 
$L^\infty$-norm of the (bounded) function $h$, which is readily 
seen to be an appropriate choice (see Subsec.~\ref{sub:mass} below
for details). Note also that, for constant $h$, \eqref{CH1}-\eqref{CH2} reduces
to the well-known Cahn--Hilliard--Oono system (see, e.g., \cite{GGW,Mi,OP1,OP2}).

\smallskip

Our main mathematical results are devoted to proving existence of weak
solutions under mild conditions on parameters and data as well as regularity and 
uniqueness results holding in more restrictive settings.
In particular, under the sole ``energy regularity'' conditions
on the initial data (basically corresponding to the finiteness of the functional
$\calF$ at the initial time), we can prove existence of weak solutions
for nonconstant, bounded and nondegenerate mobilities $\mobm$, $\mobn$,
and for a wide class of logistic terms.
In particular, we provide, depending 
on the space dimension $d$, sufficient conditions on the growth of $b$ at
infinity in order to exclude the occurrence of blowup. This result
is proved by a-priori estimates and weak compactness methods. A possible
approximation scheme compatible with the a-priori estimates is also sketched.

In the case of true logistic growth, i.e., for $b$ behaving like 
$- \sigma^2$ at infinity, we can also present a number of regularity results
holding under additional hypotheses on the mobilities and on the other coefficients
and data. As is customary for the CH system, some regularity results
are only valid in spatial dimension $d=2$, for reasons depending 
both on the structure of equation \eqref{nutr} (and, in particular, on
the quadratic behavior of the cross-diffusion term), and on the occurrence
of the singular potential, which gives rise, in the three-dimensional
case, to an upper regularity threshold (see, e.g., \cite{MZ}).
In some cases we can also prove uniqueness; in fact, this is presented 
as a conditional result stating that two weak solutions starting 
from the same initial data and obeying some additional regularity
properties must coincide. Then, it is observed that these regularity
conditions are fulfilled for proper classes of strong solutions,
also depending on the regularity of data and on the space dimension.

\smallskip

The plan of the paper is as follows: in the next section, we introduce our precise 
assumptions and present the statements of all our mathematical results. Then, in 
Section~\ref{sec:well}, we prove existence of weak solutions, while in Section~\ref{sec:rego}
we move to the regularity results. Finally, Section~\ref{SEC:UQ} is devoted to 
uniqueness of ``strong'' solutions.


\section{Mathematical preliminaries and main results}
\label{sec:main}


\subsection{Notation}\label{SEC:NOT}

Before diving into the mathematical details, let us introduce the notation employed in the paper.
Letting $X$ be a Banach space, we denote by $\norma{\cdot}_X$ the corresponding norm, by $X^*$ the 
topological dual of $X$, and by $\< \cdot , \cdot >_X$ the related duality pairing between $X^*$ and $X$.
Standard Lebesgue and Sobolev spaces defined on $\Omega$, for every $1 \leq p \leq \infty$ and $k \geq 0$,
are indicated by $L^p(\Omega)$ and $W^{k,p}(\Omega)$, with associated norms
$\norma{\cdot}_{L^p(\Omega)}=\norma{\cdot}_{p}$ and $\norma{\cdot}_{W^{k,p}(\Omega)}$, respectively. 
When $p = 2$, these become Hilbert spaces and we use $\norma{\cdot}=\norma{\cdot}_2$ for the norm of 
$\Lx2$ and set $H^k(\Omega):= W^{k,2}(\Omega)$. 
Moreover, for brevity we introduce the following notation:
\begin{align*}
  & H := \Lx2, 
  \qquad  
  V := \Hx1, 
  \qquad 
  H_{\bf n}^2(\Omega) := \{ v \in \Hx2 : \dn v = 0 \,\, \text{on $\Gamma$} \},
\end{align*}
where we denote by $\Gamma$ the boundary of $\Omega$, that is $\Gamma= \partial \Omega.$

For every $v\in V^*$, we use 
$v\OO:=\frac1{|\Omega|}\<{v},{1}>_V$ for the 
generalized mean value of $v$. 
Let us also point out a version of the celebrated Poincar\'e--Wirtinger inequality:
\begin{align}
  \norma{v - v\OO}  \leq c_{\Omega} \norma{\nabla v},
  \quad  v\in V,
  \label{poincare}
\end{align}
where the constant $c_\Omega>0$ depends only on $\Omega$ and the spatial dimension $d$.
The norm in $V^*$ will be simply denoted by $\| \cdot \|_*$.
Identifying $H$ with $H^*$ by employing the scalar product of $H$, we obtain the 
chain of continuous and dense embeddings $V\subset H \subset V^*$. Moreover, 
we may denote as $V_0$, $H_0$, $V_0^*$ the (closed) subspaces respectively
of $V$, $H$, and $V^*$, consisting of functions (or functionals) with zero spatial
mean. Then, we observe that the weak version of the operator $-\Delta$
with homogeneous Neumann boundary conditions, i.e.,
\begin{equation}\label{neum:lap}
   (-\Delta) : V \to V^*, \qquad 
    \duavg{(-\Delta) v, z}:= \io \nabla v\cdot \nabla z,
\end{equation}
for $v,z\in V$, is invertible when it is restricted to the functions
with zero spatial mean (i.e., when it operates from $V_0$ to $V_0^*$).
Its inverse operator will be denoted by $\calN:V_0^* \to V_0$. 

Finally, we remark that, for any $v \in \Vp$ 
there exists a positive constant $c$ such that
\begin{align*}
	| v\OO |= \Big| \frac 1 {|\Omega|} \<v,1>_{V} \Big|\leq c \norma{v}_*,
\end{align*}
whence the Poincar\'e--Wirtinger inequality \eqref{poincare} yields 
\begin{align*}
	\norma{v}_{V} \leq c (\norma{\nabla v}+|v\OO| )
	\leq c (\norma{\nabla v}+\norma{ v}_* ), \quad v \in V.
\end{align*}
From now onward, we convey that the small-case symbol $c$ denotes every constant
that only relates to structural data of the problem and the norms of the involved functions; 
thus, its meaning may vary from line to line.
When an additional positive constant $\delta$ also enters the computation, we use $\cd$ to 
stress the dependency of $c$ on $\d$.


\subsection{Main results}

We describe here our basic assumptions on coefficients and data, which will
be kept for the remainder of the paper. Each assumption will be presented with
a number of comments aimed at outlining its meaningfulness in the light
of our specific application to tumor growth processes.

Moreover, we observe that more restrictive conditions, needed
for the regularity and uniqueness results, will be specified on occurrence.

\smallskip
\noindent%
{\bf (A1) - Assumptions on the potential.}~~%
We assume $F$ to be decomposed as $F=F_1 + F_2$, with 
$F_1$ denoting the ``singular'' convex part and $F_2$ the ``smooth'' nonconvex part.
The latter is simply given by $F_2(r):= - \lambda r^2/2$, $r\in\RR$, with $\lambda \ge 0$ (so including
the case $F_2\equiv0$ corresponding to a convex potential $F$). The properties
of $F_1$ are better described by using some basic notions from the theory of 
subdifferential
operators. Namely, we assume $F_1:\RR\to(-\infty,+\infty]$ be convex and lower semicontinuous
with the set $\{r\in\RR:F_1(r)<+\infty\}$ (usually indicated as {\sl domain}\/ of $F_1$
in the convex analysis terminology) coinciding either with $[-1,1]$ or with $(-1,1)$.
In such a situation it is well-known that the {\sl subdifferential} $f_1=\de F_1$ is a 
maximal monotone, possibly multivalued, operator in $\RR$ such that $\{f_1(r)\}$
is nonempty at least for $r\in(-1,1)$ and at most for $r\in[-1,1]$. Here, we are not 
interested in considering nonsmooth operators; for this reason we will also 
assume $F_1\in C^2(-1,1)$ so that $f(r)=f_1(r) +f_2(r) = F_1'(r)+F_2'(r)$ for $r\in (-1,1)$. 
Moreover, just for the sake of simplicity, we assume $F_1$ so normalized that 
$F_1'(0)=0$, which implies in particular that $F_1'(r)\ge 0$ for
$r\ge 0$ and $F_1'(r)\le 0$ for $r\le 0$. Notice that
this includes both the case of the Flory--Huggins potential \eqref{Flog} (whose 
{\sl domain}\/ is $[-1,1]$) as well as the case of ``more singular''
potentials like that considered in \cite{Sc}, i.e.,
\begin{equation}\label{Fsingsing}
  F_1(r) = - \log(1-r^2), \quad r\in(-1,1).
\end{equation}
Notice however that nonsmooth potentials, like the so-called {\sl double obstacle potential}\/
$F_1(r) = I_{[-1,1]}(r)$, with $I_{[-1,1]}$ denoting the {\sl indicator function}\/
of the interval $[-1,1]$ (cf., e.g., \cite{Br}) may be considered as well,
at least for what concerns existence of weak solutions.

\smallskip
\noindent%
{\bf (A2) - Assumptions on the mass source term.}~~%
We assume $S$ to be given by
\begin{equation}\label{hp:S}
  S(\fhi,\sigma) = - m \fhi + h(\fhi,\sigma), \quad (\fhi, \sigma) \in \RR^2,
\end{equation}
where $m > 0$ is a constant. Moreover, we assume $h$ to be uniformly bounded and Lipschitz
continuous with respect to the complex of its variables. Finally, 
the following compatibility condition is assumed to hold
\begin{align}\label{hp:h1}
  & \frac{H}{m} < 1, \quad  \text{where }\, H:= \| h \|_{L^\infty(\RR\times\RR)}.
\end{align} 
Notice that, in principle, only the behavior of $h$ over the physical reference set 
$\calH = [-1,1]\times[0,+\infty)$ is significant. On the other hand, it is worth assuming
$h$ be defined for every value of its arguments because, for instance, in an approximation,
it may happen $\fhi$ to take values outside $[-1,1]$ (cf. Subsec.\ref{subsec:appro}).

It is worth observing that, if $h$ is a constant function (still indicated as $h$ for 
notational simplicity), the expression of $S$ corresponds to that occurring in the  
so-called Cahn--Hilliard--Oono equation (see, e.g., \cite{GGW,Mi,OP1,OP2} and 
the references therein), i.e.
\begin{equation*}
  S(\fhi,\sigma) = - m \fhi + h,
  \quad 
  h \in (-1,1).
\end{equation*}
Moreover, we observe that the case $m\equiv h\equiv 0$, corresponding to the conservation
of total tumor mass, is admissible too, and in fact simpler to deal with. The 
variations needed to consider the situation with no mass source will be 
outlined on occurrence.

\smallskip
\noindent%
{\bf (A3) - Assumptions on the chemical source term.}~~%
We assume $b$ has a generalized logistic expression of the form
\begin{equation} \label{b:logi2}
   b(\fhi,\sigma) =  \beta(\fhi) (\kappa_0\sigma - \kappa_\infty \sigma^p),  
    \quad \fhi\in\RR,~~\sigma\ge 0,
\end{equation}
where $p \in (1,2]$ is a given exponent, and $\kappa_0> 0,\kappa_\infty > 0$ are positive
constants. Note that, in view of the minimum principle holding for equation \eqref{nutr}
(and preserved in the approximation) it is sufficient to specify the 
above expression for $\sigma\ge 0$. Here, the function $\beta:\RR \to \RR$ 
is assumed to be Lipschitz continuous and to satisfy 
\begin{alignat}{2}
 \label{beta:bou}
   & 0 \le \beta(r) \le B < + \infty &&\quext{for every }\,r\in\RR,\\
 \label{beta:bou2}
   & 0 < b_0 \le \beta(r) &&\quext{for every }\,r\in[-3/2,3/2],\\
 \label{beta:bou3}
   & \beta(r) \equiv 0 &&\quext{for every }\,r\not\in(-2,2),
\end{alignat}
where $b_0,B>0$ are given constants. In fact, in the limit, the only 
significant values of $\beta(r)$ will be those assumed as $r\in[-1,1]$. 
However, as in the case of $h$, it is necessary to extend $\beta$ also
outside that interval in view of an approximation.
We finally observe that the motivations underlying the choice of a logistic behavior for the 
chemical source have been extensively detailed in the introduction.

\smallskip
\noindent%
{\bf (A4) - Assumptions on the mobility functions.}~~%
We assume $\mobm\in C^0(\RR\times[0,+\infty))$ and 
$\mobn\in C^1(\RR\times[0,+\infty))$ to be globally Lipschitz continuous 
in the complex of their arguments,
and to satisfy
\begin{align} \label{mob:bou}
  & 0 < m_0 \le \mobm(\fhi,\sigma),\mobn(\fhi,\sigma) \le M < + \infty,
    \quext{for every }\,\fhi\in\RR,~~\sigma\ge 0,\\
  \label{mob:bou2}
  & | \partial_\fhi \mobn(\fhi,\sigma) | \le M < + \infty,
    \quext{for every }\,\fhi\in\RR,~~\sigma\ge 0,
\end{align}
where, again, $m_0,M>0$ are given constants. In order to properly state a weak formulation
of the system, we also set
\begin{equation*}
   N(\fhi,\sigma) := \int_0^\sigma \mobn(\fhi,s)\,\dis,
\end{equation*}
and we notice that, thanks to \eqref{mob:bou}, $N$ satisfies
\begin{equation} \label{prop:N}
   m_0 \sigma \le N(\fhi,\sigma) \le M\sigma \quext{for every }\,
    \fhi\in\RR,~~\sigma\ge 0.
\end{equation}
Moreover, it is not difficult to prove that 
\begin{align} \no
  | N(\fhi_1,\sigma_1) - N(\fhi_2,\sigma_2) | 
   & \le | N(\fhi_1,\sigma_1) - N(\fhi_1,\sigma_2) | 
     + | N(\fhi_1,\sigma_2) - N(\fhi_2,\sigma_2) | \\
 \label{Lip:N}
  & \leq L | \sigma_1 - \sigma_2 | + L \sigma_2 | \fhi_1 - \fhi_2 | 
\end{align}
where $L>0$ is a Lipschitz constant. We also need to define 
\begin{equation*}
   \mobn_1(\fhi,\sigma) := \partial_\fhi N(\fhi,\sigma)
   = \int_0^\sigma \partial_\fhi \mobn(\fhi,s)\,\dis,
\end{equation*}
whence there holds the identity 
\begin{align}	\label{nablaN:identity}
  \nabla N(\fhi,\sigma) = \mobn (\fhi,\sigma) \nabla \sigma + \mobn_1 (\fhi,\sigma) \nabla \fhi.
\end{align}
Moreover, since $\mobn$ is assumed to be $C^1$, $\mobn_1$ turns out to 
be continuous and to satisfy
\begin{equation} \label{cresc:n1}
   | \mobn_1(\fhi,\sigma) | \le M \sigma 
    \quext{for every }\, \fhi\in\RR,~~\sigma\ge 0,
\end{equation}
as a direct check shows.

\medskip

In addition to the above assumptions, we take the chemotaxis 
sensitivity $\chi$ appearing in \eqref{CH2}-\eqref{nutr} 
to be a strictly positive constant. We keep its value explicit because its magnitude will play a role 
in part of the results. On the other hand, the magnitude of the interfacial energy coefficient
$\eps>0$ has no importance for the mathematical analysis. Hence, for the sake of simplicity,
we will directly take $\eps=1$ from now onward, without further reference.

The above choices lead us to rewrite system \eqref{CH1}-\eqref{nutr} in the following
form, where, for the sake of clarity, some expressions of the source terms have
been expanded:
\begin{alignat}{2}
\label{CH1b}
  & \fhi_t -\dive \big(\mobm(\fhi,\sigma) \nabla \mu\big) = - m \fhi + h(\fhi,\sigma)
  && \quad \text{in $Q$,}
   \\
  \label{CH2b}
  & \mu = - \Delta \fhi + F_1'(\fhi) - \lambda\fhi - \chi\sigma
    && \quad \text{in $Q$,}
   \\
 \label{nutrb}
  & \sigma_t - \dive \big(\mobn (\fhi,\sigma)  \nabla \sigma \big)
  - \chi \dive \big(\sigma \mobn (\fhi,\sigma)  \nabla(1-\fhi) \big) 
    = \beta(\fhi) (\kappa_0\sigma - \kappa_\infty \sigma^p)
      && \quad \text{in $Q$.}
\end{alignat}
In particular, we have written the equation for $\sigma$ in the ``decoupled''
form \eqref{nutrb} where the {cross-diffusion} term is split between two distinct
components. Indeed, this is a necessary step in order to deal with a mathematically 
tractable weak formulation. On the other hand, it is also worth recalling the following
``coupled'' version of the equation for $\sigma$ which is more suitable for 
the derivation of the a-priori estimates:
\begin{equation}\label{nutrb2}
  \sigma_t - \dive \big(\sigma \mobn (\fhi,\sigma)  
     \nabla ( \ln \sigma + \chi (1-\fhi) ) \big)
    = \beta(\fhi) (\kappa_0\sigma - \kappa_\infty \sigma^p)
     \quad \text{in }\,Q.
\end{equation}
It is clear that, as far as ``smooth'' solutions are considered, relations
\eqref{nutrb} and \eqref{nutrb2} may be interpreted as equivalent. In particular, 
this may happen in the approximation thanks to additional regularity available
at that level. 

\smallskip

We can now present our first result for the chemotaxis system \SYS\ 
concerning the existence of weak solutions in dimensions two and three holding
under the assumptions detailed above. We observe in particular that, in
order to pass to the limit in the cross-diffusion term (and in particular
to decouple its components as expressed by equation \eqref{nutrb}), we will
be forced to restrict the admissible range of the exponents $p$ 
in~\eqref{b:logi2} in a way depending on the space dimension $d$. 
In the sequel, functions of the form $g(r)=r \ln r$, or similar,
are implicitly intended to be extended, by continuity, to $r=0$ by setting $g(0)=0$.
\begin{teor}[Existence of weak solutions, $d\in\{2,3\}$]\label{THM:WEAK}
 Suppose that\/ {\rm Assumptions {\bf (A1)-(A4)}} are satisfied, let $\chi>0$ and let $d \in \{2,3\}$.
 Moreover, assume that the initial data satisfy
 \begin{align}\label{hp:fhi0}
   & \fhi_0 \in V, 
    \qquad F(\fhi_0) \in L^1(\Omega),
    \qquad  (\fhi_0)\OO \in(-1, 1),\\
  \label{hp:sigma0}
   & \sigma_0\ge 0~~\text{a.e.~in }\,\Omega,
    \qquad \sigma_0 \ln \sigma_0 \in L^1(\Omega).
 \end{align} 
 Moreover, assume that the exponent $p$ in\/ \eqref{b:logi2} satisfies
 $p \in [3/2,2]$ for $d=2$ and $p \in [8/5,2]$ for $d=3$.
 Then, system\/ \SYS\ admits at least one weak solution; namely
 there exists a triplet $(\fhi, \mu, \sigma)$ satisfying the regularity properties
 \begin{align} 
   \label{reg:1}
   & \fhi \in {\H1 \Vp \cap L^\infty(0,T;V)} \cap  L^p(0,T;W^{2,p}(\Omega)),\\
  \label{reg:1bis} 
   & \fhi \in L^\infty(Q): \, - 1 \le \fhi(x,t) \le 1 
    \quext{for a.e.~}\,(x,t)\in Q, \\
  \label{reg:2:nonneg}
   & \sigma(x,t) \ge 0
    \quext{for a.e.~}\,(x,t)\in Q,\\
  \label{reg:2:dtsigma}
   & \sigma \in C^0([0,T];\Wn\!\!^*) \cap \L\infty {\Lx1},  \\
   \label{reg:2}
   & \sigma^p \ln \sigma  \in L^1(0,T;L^1(\Omega)),
     \quad \sigma \ln \sigma  \in L^\infty(0,T;L^1(\Omega)),\\
  \label{reg:3}
   &  \sigma^{1/2} \nabla (\ln \sigma + \chi(1-\fhi)) \in {L^2(0,T;H)},\\
   \label{reg:4}
   &  \mu \in L^2(0,T;V),\\
   \label{reg:5}
   &  F(\fhi) \in {L^\infty(0,T;L^1(\Omega))}, \quad f(\fhi) \in {L^p(0,T;\Lx p)},
 \end{align}
 together with the ``pointwise'' formulation
 \begin{equation}\label{wf:mu} 
    \mu = - \Delta \fhi + f(\fhi) - \chi\sigma \quad \text{a.e. in }\, Q,
 \end{equation}
 the boundary condition
 \begin{equation}\label{wf:mu:bc} 
   \dn \fhi = 0 \quext{in the sense of traces on }\,\Gamma\times(0,T),
 \end{equation}
 and the weak variational formulations
 \begin{align}\label{wf:phi}
   & \<\fhi_t,v>_V + \io \mobm(\fhi,\sigma) \nabla \mu\cdot \nabla v = \io S(\fhi,\sigma) v,
    \quext{a.e.~in }\,(0,T), \\ 
   \no
   & \<\sigma(t), w(t) >_{{\Wn}} - \itt\io N(\sigma,\fhi) \Delta w 
      - \itt\io \mobn_1(\fhi,\sigma) \nabla \fhi \cdot \nabla w
      - \chi \itt\io \sigma \mobn(\fhi,\sigma) \nabla \fhi\cdot \nabla w  \\ 
   \label{wf:sigma}  
   & \qquad = \<\sigma_0, w(0) >_{{\Wn}} + \itt \<\sigma, w_t >_{{\Wn}}
    + \itt \io b(\fhi,\sigma) w, \quext{for every }\, t\in[0,T], 
 \end{align}
 for all test functions $v \in V$, $w \in C^1([0,T];\Wn)$, 
 where we have set
 \begin{align*}
   \Wn : = \big\{ w \in W^{1,\infty}(\Omega) \cap W^{2,p'}(\Omega):~~\dn w  = 0~~\text{on }\, \Gamma \big\},
 \end{align*}
 with $p' $ being the conjugate exponent of $p$, i.e., the exponent such that $1/p + 1/p'= 1$.
 The space $\Wn$ is naturally endowed with the graph norm, which turns it into a Banach space. 
 Besides, the initial conditions are satisfied in the sense that
 \begin{align}\label{ini:1}
   \fhi|_{t=0} & = \fhi_0 \quad \text{a.e.~in }\, \Omega,\\ 
  \label{ini:2}
   \sigma|_{t=0} & = \sigma_0 \quad \text{in }\, \Wn\!\!^*.
 \end{align}
 Furthermore, if the source term $b$ has a standard logistic growth, i.e., $b$ 
 fulfills  \eqref{b:logi2} with $p=2$, then the solution 
 $(\fhi, \mu, \sigma)$ obtained before
 satisfies the additional regularity property
\begin{equation}
\label{rego:add}
       \sigma^{1/2}  \in L^2(0,T; V).
 \end{equation}
\end{teor}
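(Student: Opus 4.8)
Here is how I would prove Theorem~\ref{THM:WEAK}.

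\textbf{Approximation.} The plan is to obtain the solution as the limit of a regularized, finite-dimensional approximation, exploiting the free-energy/entropy structure of \SYS. First I would set up an approximate problem depending on parameters $(n,\delta)$ in which: the singular part $F_1$ is replaced by its Moreau--Yosida regularization $F_{1,\delta}\in C^2(\RR)$ with globally Lipschitz derivative $f_{1,\delta}$ (so that the constraint $\fhi\in[-1,1]$ is only recovered in the limit); the nutrient flux is desingularized by replacing $\ln\sigma$ with $\ln(\sigma+\delta)$ and the degenerate mobility $\sigma\mobn$ with $(\sigma+\delta)\mobn$, making \eqref{nutrb2} uniformly parabolic; and a Faedo--Galerkin discretization in the Neumann eigenfunctions of $-\Delta$ reduces the system to an ODE system, solvable locally by Cauchy--Lipschitz and globally thanks to the estimates below. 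The source terms $S$ and $b$ are already Lipschitz, respectively of controlled growth, so they need no regularization. One then passes to the limit first as $n\to\infty$ and then as $\delta\searrow0$.

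\textbf{A-priori estimates.} The key identity comes from testing \eqref{CH1b} by $\mu$, \eqref{CH2b} by $\fhi_t$ and \eqref{nutrb2} by $\mu_\sigma=\ln\sigma+\chi(1-\fhi)$, giving
\begin{align*}
 \ddt\calF(\fhi,\sigma)+\io\mobm|\nabla\mu|^2+\io\sigma\mobn|\nabla\mu_\sigma|^2
  =\io S\,\mu+\io b\,\mu_\sigma.
\end{align*}
By \eqref{mob:bou} the two dissipation terms bound $\nabla\mu$ in $L^2(Q)$ and $\sigma^{1/2}\nabla\mu_\sigma$ in $L^2(Q)$, which is \eqref{reg:3}. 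On the right, $\io S\,\mu$ is absorbed once $\mu\OO$ is controlled: here the mass analysis of Subsec.~\ref{sub:mass} enters, since \eqref{hp:h1} keeps $(\fhi)\OO(t)$ strictly inside $(-1,1)$, and the classical singular-potential estimate bounding $\|f_1(\fhi)\|_{L^1}$ through $\io f_1(\fhi)(\fhi-(\fhi)\OO)$ then yields $\mu\in L^2(0,T;V)$, i.e.\ \eqref{reg:4}. In $\io b\,\mu_\sigma$ the leading contribution $-\kappa_\infty\io\beta(\fhi)\sigma^p\ln\sigma$ carries the favourable sign for large $\sigma$ and produces $\sigma^p\ln\sigma\in L^1(Q)$, the first half of \eqref{reg:2}; the remaining terms are lower order. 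Coercivity of $\calF$ --- for which the singular potential \emph{helps} to dominate the coupling $\chi\sigma(1-\fhi)$, as remarked after \eqref{freeenergy} --- then gives $\fhi\in L^\infty(0,T;V)$, $F(\fhi)\in L^\infty(0,T;L^1)$ and $\sigma\ln\sigma\in L^\infty(0,T;L^1)$, hence \eqref{reg:5} and \eqref{reg:2}. A standard comparison (testing the $\sigma$-equation by the negative part and using that the reaction vanishes at $\sigma=0$) yields $\sigma\ge0$, i.e.\ \eqref{reg:2:nonneg}, and the limit constraint \eqref{reg:1bis}. From \eqref{CH1b} one reads $\fhi_t\in L^2(0,T;V^*)$, the first part of \eqref{reg:1}; testing the $\sigma$-equation by $w\in C^1([0,T];\Wn)$ and using $N\le M\sigma\in L^\infty(0,T;L^1)$ (by \eqref{prop:N}), $|\mobn_1|\le M\sigma$ (by \eqref{cresc:n1}) and \eqref{reg:3} gives $\sigma\in C^0([0,T];\Wn^*)$, i.e.\ \eqref{reg:2:dtsigma}, and identifies the weak form \eqref{wf:sigma}.

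\textbf{Compactness and the main obstacle.} Reading \eqref{CH2b} as $-\Delta\fhi+f_1(\fhi)=\mu+\lambda\fhi+\chi\sigma$, elliptic regularity for the monotone $f_1$ gives $f(\fhi)\in L^p(0,T;\Lx p)$ and $\fhi\in L^p(0,T;W^{2,p}(\Omega))$ --- the last parts of \eqref{reg:1} and \eqref{reg:5} --- using that $\chi\sigma\in L^p(Q)$ by the logistic bound on $\sigma$. Aubin--Lions then gives $\fhi_n\to\fhi$ in $C^0([0,T];H)$ and a.e. The delicate point is the strong convergence of $\sigma_n$: rewriting \eqref{reg:3} as $2\nabla\sigma^{1/2}=\sigma^{1/2}\nabla\mu_\sigma+\chi\,\sigma^{1/2}\nabla\fhi$, and combining it with the improved integrability of $\sigma$ from $\sigma^p\ln\sigma\in L^1(Q)$, one extracts spatial compactness of $\sigma$ (through $\sigma^{1/2}$, or through $N(\fhi,\sigma)$); with the time bound this yields, via Aubin--Lions--Simon, $\sigma_n\to\sigma$ in $L^2(Q)$ and a.e. The hardest step --- and the one that forces the restriction on $p$ --- is the passage to the limit in the decoupled cross-diffusion terms, namely showing
\begin{align*}
 N(\fhi_n,\sigma_n)\to N(\fhi,\sigma),\qquad
 \mobn_1(\fhi_n,\sigma_n)\nabla\fhi_n\debole\mobn_1(\fhi,\sigma)\nabla\fhi,\qquad
 \sigma_n\mobn\,\nabla\fhi_n\debole\sigma\mobn\,\nabla\fhi
\end{align*}
in the pairing against $\nabla w$ with $w\in\Wn$ (so $\nabla w\in L^\infty$). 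The lower bounds $p\ge3/2$ ($d=2$) and $p\ge8/5$ ($d=3$) are chosen precisely so that the damping $-\sigma^p$ provides, via \eqref{reg:2}, enough integrability of $\sigma$ to make these products equi-integrable; then the strong convergence of $\sigma_n$, the a.e.\ convergence of $\fhi_n$, the growth \eqref{cresc:n1} of $\mobn_1$ and Vitali's theorem identify all the limits.

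\textbf{Conclusion and the $p=2$ addendum.} Monotonicity of $f_1$ together with the a.e.\ convergence of $\fhi_n$ and the strong convergence of $\mu_n,\sigma_n$ identifies $\lim f_{1,\delta}(\fhi_n)=f_1(\fhi)$, yielding the pointwise relation \eqref{wf:mu}, while the $H^2$-regularity carries the Neumann condition \eqref{wf:mu:bc}; relation \eqref{wf:phi} follows from the weak form of \eqref{CH1b}, and the time continuities give the initial conditions \eqref{ini:1}--\eqref{ini:2}. Finally, when $p=2$ the stronger bound $\sigma^2\ln\sigma\in L^1(Q)$ lets one control the cross term $\chi\io\sigma^{1/2}\nabla\sigma^{1/2}\cdot\nabla\fhi$ and the term $\chi^2\io\sigma|\nabla\fhi|^2$ appearing in the expansion of $\io\sigma\mobn|\nabla\mu_\sigma|^2$; this upgrades $\nabla\sigma^{1/2}$ to $L^2(Q)$, which together with $\sigma^{1/2}\in L^\infty(0,T;H)$ gives the additional regularity \eqref{rego:add}.
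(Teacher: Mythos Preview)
Your overall architecture --- energy/entropy estimate, mass control of $\fhi\OO$, elliptic regularity for \eqref{CH2b}, compactness and identification of the nonlinear limits --- matches the paper's proof, and your handling of the $p=2$ addendum is in the same spirit (the paper decouples by testing \eqref{CH2b} with $-\Delta\fhi$ and \eqref{nutrb} with $\ln\sigma$, absorbing $\io\sigma^2$ into $\io\sigma^2\ln\sigma$).

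There is however a genuine gap in your approximation. You regularize $F_1$ by its Moreau--Yosida envelope and desingularize the nutrient equation via the shift $\sigma\mapsto\sigma+\delta$, but you do not truncate $\sigma$. Once $F_1$ is smoothed out, the constraint $|\fhi|\le 1$ is lost at the approximate level, and then the coupling term $\chi\io\sigma(1-\fhi)$ in $\calF$ is no longer sign-controlled: you have at best $F_{1,\delta}(\fhi)\sim\fhi^2/(2\delta)$ and $\io\sigma\ln\sigma$ on the good side, while the bad term behaves like $\io\sigma|\fhi|$, and no Young inequality closes this (you would need $\sigma^2$-control, which you do not have). So the energy inequality you write down cannot be made uniform in the approximation parameters, and the estimates do not propagate. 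You are aware of the issue --- you note that ``the singular potential helps to dominate the coupling'' --- but your scheme removes exactly the mechanism you are relying on.

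The paper's fix is to regularize $F_1$ and \emph{simultaneously} truncate $\sigma$: it introduces $T_n$ with $T_n(\sigma)\le n+1$ and replaces the coupling by $\chi T_n(\sigma)(1-\fhi)$ and the time derivative in \eqref{nutrb} by $T_n(\sigma)_t$; it also designs $F_n$ so that $F_n'(r)\sign r\ge n^3(|r|-1)$ for $|r|\ge1$. With these two choices one can show (Subsec.~\ref{subsec:appro}) that the approximate energy $\calF_n$ is coercive \emph{uniformly in $n$}, by splitting $\{|\fhi|\le2\}$ versus $\{|\fhi|>2\}$ and playing $T_n(\sigma)\le n+1$ against the $n^3$-growth of $F_n$. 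Your shift $\sigma\mapsto\sigma+\delta$ does cure the $\ln\sigma$ singularity near zero, but it does nothing about the large-$\sigma$, large-$|\fhi|$ regime where coercivity fails; you need either a truncation of $\sigma$ tied to the strength of the regularized potential, or some other device that keeps $|\fhi|$ bounded at the approximate level.
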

\noindent%
It is worth providing some further comments on the above statement. First of all, we 
notice that, due to {\bf (A1)}, the second condition in \eqref{hp:fhi0} implies in particular
that $\fhi_0\in \Lx\infty$ with $-1\leq \fhi_0\leq 1$ almost everywhere in $\Omega$.
We also observe that relations \eqref{wf:phi}-\eqref{wf:sigma} conveniently incorporate the boundary
conditions. Finally, we {observe} that there may be proved the additional regularity
property $\sigma \in BV(0,T;\Wn\!\!^*)$. 

\smallskip

The above result may be improved as soon as the source term is pure logistic, 
i.e., $b$ verifies \eqref{b:logi2} with $p=2$.
Specifically, that additional assumption, along with natural conditions on the initial data, 
suffices to improve the regularity of the weak solutions for $d=2$
without any further restriction. In the three-dimensional case, a similar property holds
provided that the chemotactic coefficient $\chi$ is assumed small enough and 
the mobility $\mobn$ is taken as a constant function (however this condition may be partially
relaxed, see Remark~\ref{rem:mobn} below). 
\begin{teor}[Regularity properties of weak solutions]\label{THM:WEAK:2d3d}
 Suppose that Assumptions\/ {\bf (A1)-(A4)} and \eqref{hp:fhi0}-\eqref{hp:sigma0} 
 hold with $p=2$ in \eqref{b:logi2},
 and assume that the initial datum $\sigma_0$ additionally satisfies
 \begin{align}\label{ass:initialsigma:2d}
   \sigma_0 \in H.
 \end{align}
 Moreover, if $d=3$, suppose also that
 \begin{align}\label{smallness}
   & \chi < \sqrt{2 \kappa_\infty b_0 },\\
  \label{n:cost}
   & \mobn(\fhi,\sigma) \equiv 1.
 \end{align} 
 Then, the weak solution $(\fhi,\mu,\sigma)$ provided by Theorem~\ref{THM:WEAK}
 satisfies the following additional regularity properties: 
 \begin{align}\label{reg:2d:1}
   \fhi & \in H^1 (0,T; \Vp) \cap L^4(0,T; H^2_{\bn}(\Omega)) \cap L^2(0,T; W^{2,q}(\Omega)),\\ 
   \label{reg:2d:2}
   \sigma &  \in H^1 (0,T; \Vp) \cap C^0([0,T]; H) \cap L^2 (0,T; V),
 \end{align}
 where $q=6$ in~\eqref{reg:2d:1} if $d=3$, whereas one can take any $q \in [1,\infty)$
 if $d=2$.  
 Moreover, the system equations are satisfied in the following sense:
 \eqref{wf:mu}-\eqref{wf:mu:bc} hold together with the variational equalities
 \begin{align}\label{wf:1}
   & \<\fhi_t,v>_{V} + \io \mobm(\fhi,\sigma) \nabla \mu\cdot \nabla v 
      = \io S(\fhi,\sigma) v,\\ 
  \label{wf:3} 
   & \<\sigma_t,v>_{V} + \io \mobn(\fhi,\sigma) \nabla \sigma \cdot \nabla v 
      - \chi \io \sigma \mobn(\fhi,\sigma) \nabla \fhi \cdot \nabla v
     =  \io b (\fhi, \sigma)v,
 \end{align}
 for every test function $v \in V$ and almost everywhere in $(0,T)$.
 Finally, the initial conditions\/ \eqref{ini:1}-\eqref{ini:2} 
 are now both satisfied almost everywhere in $\Omega$.
\end{teor}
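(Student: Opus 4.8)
The plan is to obtain the additional regularity by deriving new a priori estimates on the approximating solutions constructed for Theorem~\ref{THM:WEAK} (so that the manipulations below are rigorous at the approximation level, where the degeneracy of $\ln\sigma$ has been removed and $F_1'$ regularized), and then passing to the limit. Besides the energy bounds already exploited for existence, the scheme yields two genuinely new estimates: one improving the regularity of $\sigma$ to $\LIH\cap\LDV$, and one improving that of $\fhi$ by elliptic regularity. The remaining claims then follow by interpolation and by comparison in the equations. Throughout I would use the minimum principle \eqref{reg:2:nonneg} and the bound \eqref{reg:1bis}, which together with \eqref{beta:bou2} force $\beta(\fhi)\ge b_0>0$, so that the logistic term supplies genuine cubic dissipation.

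The key step, and the main obstacle, is the estimate for $\sigma$. I would test \eqref{nutrb} (equivalently \eqref{nutrb2}) by $\sigma$, the initial energy $\tfrac12\|\sigma_0\|^2$ being finite thanks to \eqref{ass:initialsigma:2d}. The diffusion term gives $\|\nabla\sigma\|^2$, the reaction term gives the favorable dissipation $\kappa_\infty\io\beta(\fhi)\sigma^3\ge\kappa_\infty b_0\io\sigma^3$, and all the difficulty is concentrated in the cross-diffusion (Keller--Segel) term $-\chi\io\sigma\mobn(\fhi,\sigma)\nabla\fhi\cdot\nabla\sigma$. In two dimensions the favorable Sobolev embeddings let one absorb this term into $\|\nabla\sigma\|^2$ and the cubic dissipation for any $\chi>0$ and for general (nonconstant) mobility, closing a Gronwall argument and proving \eqref{reg:2d:2} for $d=2$. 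In three dimensions this absorption fails in general, and this is exactly where \eqref{n:cost} enters: constant mobility permits the clean integration by parts $-\chi\io\sigma\nabla\fhi\cdot\nabla\sigma=\tfrac{\chi}{2}\io\sigma^2\Delta\fhi$ (using $\dn\fhi=0$), after which $\Delta\fhi$ is replaced through the pointwise identity \eqref{wf:mu}. This produces lower-order couplings (involving $\io\sigma^2\mu$ and $\io\sigma^2F_1'(\fhi)$) together with a cubic contribution of coefficient $\chi^2/2$, and it feeds the $\sigma$-estimate into the elliptic bound for $\fhi$ below, whose right-hand side in turn carries a term $\sim\chi\|\sigma\|$. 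Closing this feedback loop by Gronwall is possible only if the coupling strength is dominated by the cubic logistic dissipation measured by $\kappa_\infty b_0$, which is precisely the smallness condition \eqref{smallness}, $\chi^2<2\kappa_\infty b_0$. The outcome is the uniform bound $\sigma\in\LIH\cap\LDV$.

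With $\sigma\in\LIH\cap\LDV$ at hand, I would upgrade the regularity of $\fhi$ by reading \eqref{wf:mu} as the elliptic problem $-\Delta\fhi+F_1'(\fhi)=\mu+\lambda\fhi+\chi\sigma$ with homogeneous Neumann data. Testing by $F_1'(\fhi)$ (resp.\ by $|F_1'(\fhi)|^{q-2}F_1'(\fhi)$) and using the monotonicity of $F_1'$ controls $\|F_1'(\fhi)\|$, hence $\|\Delta\fhi\|$, by the $H$-norm (resp.\ $L^q$-norm) of the right-hand side; since $\mu\in\LDV\hookrightarrow L^2(0,T;L^6(\Omega))$, $\sigma\in\LDV\hookrightarrow L^2(0,T;L^6(\Omega))$ and $\fhi$ is bounded, this gives $\fhi\in L^2(0,T;\Hx2)\cap L^2(0,T;W^{2,q}(\Omega))$, with $q=6$ for $d=3$ and any $q<\infty$ for $d=2$, i.e.\ part of \eqref{reg:2d:1}. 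The exponent $4$ in \eqref{reg:2d:1} then comes from the Gagliardo--Nirenberg interpolation $\|\fhi\|_{\Hx2}\le c\|\fhi\|_{W^{2,6}(\Omega)}^{1/2}\|\fhi\|_V^{1/2}+\text{l.o.t.}$ together with $\fhi\in\LIV$: integrating in time, $\int_0^T\|\fhi\|_{\Hx2}^4\le\|\fhi\|_{\LIV}^2\int_0^T\|\fhi\|_{W^{2,6}(\Omega)}^2<\infty$.

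Finally, I would recover the time regularity and the strengthened formulation. By comparison in \eqref{CH1b}, $\fhi_t=\dive(\mobm\nabla\mu)+S(\fhi,\sigma)\in\LDVp$; by comparison in \eqref{nutrb}, using $\sigma\in\LDV$, the bound $\nabla\fhi\in L^2(0,T;L^\infty(\Omega))$ (from $W^{2,6}(\Omega)\hookrightarrow W^{1,\infty}(\Omega)$ for $d=3$) and the growth of $b$, one gets $\sigma_t\in\LDVp$; hence $\fhi,\sigma\in H^1(0,T;\Vp)$, and a standard interpolation result yields $\sigma\in C^0([0,T];H)$, completing \eqref{reg:2d:1}-\eqref{reg:2d:2}. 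Passing to the limit in the approximation, the improved bounds allow one to test the $\sigma$-equation against arbitrary $v\in V$, giving \eqref{wf:3} in place of the smoother formulation of Theorem~\ref{THM:WEAK}, while $\sigma_t\in\LDVp$ upgrades the attainment of $\sigma_0$ to hold almost everywhere in $\Omega$. The principal difficulty remains the three-dimensional control of the cross-diffusion term, which is the sole reason for the hypotheses \eqref{smallness}-\eqref{n:cost}.
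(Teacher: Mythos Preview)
Your overall plan is the same as the paper's, and the two-dimensional part is essentially correct. The three-dimensional argument, however, has a genuine gap in the treatment of the term you call a ``lower-order coupling'', namely $-\tfrac{\chi}{2}\io F_1'(\fhi)\sigma^2$. This term is \emph{not} lower order: $F_1'$ is singular at $\pm1$, and the only control on $\|F_1'(\fhi)\|_q$ available so far comes from the elliptic identity $-\Delta\fhi+F_1'(\fhi)=\mu+\lambda\fhi+\chi\sigma$, whose right-hand side contains $\chi\sigma$. If you feed that back, you pick up a factor like $\|\sigma\|_6$ (or $\|\sigma\|_V$) multiplying $\|\sigma\|_3\|\sigma\|$, and after Young's inequality the Gr\"onwall argument acquires a super-quadratic power of $\|\sigma\|^2$ which does not close. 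The paper avoids this feedback entirely by a sign argument: since $F_1'(0)=0$ and $F_1'$ is nondecreasing, one has $F_1'(\fhi)\ge 0$ on $\{\fhi\ge0\}$, so $-\tfrac{\chi}{2}\int_{\{\fhi\ge0\}}F_1'(\fhi)\sigma^2\le 0$ and can be dropped. On $\{\fhi<0\}$ only the negative part $(F_1')_-(\fhi)$ contributes, and for it the paper first proves an auxiliary estimate $(F_1')_-(\fhi)\in L^2(0,T;L^6(\Omega))$ by testing \eqref{CH2b} with $-(F_1')_-^5(\fhi)$; the key observation there is that the cross term $\chi\io\sigma\,(-(F_1')_-^5(\fhi))$ is nonpositive because $\sigma\ge0$, so the bound on $(F_1')_-(\fhi)$ is \emph{independent} of any unknown norm of $\sigma$. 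With this in hand, $\bigl|\tfrac{\chi}{2}\io(F_1')_-(\fhi)\sigma^2\bigr|\le \delta\|\sigma\|_3^2 + c_\delta\|(F_1')_-(\fhi)\|_6^2\|\sigma\|^2$ closes by Gr\"onwall, and the smallness condition \eqref{smallness} arises solely from comparing $\tfrac{\chi^2}{2}\|\sigma\|_3^3$ with the logistic dissipation $\kappa_\infty b_0\|\sigma\|_3^3$, not from any feedback with the elliptic bound.

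A minor point: your route to $\fhi\in L^4(0,T;H^2_{\bn}(\Omega))$ via a Gagliardo--Nirenberg interpolation between $W^{2,6}(\Omega)$ and $V$ is not correct as stated (there is no such inequality with exponent $1/2$). The paper instead tests \eqref{CH2b} by $-\Delta\fhi$: using $F_1''\ge0$ and $\fhi\in L^\infty(0,T;V)$ one gets $\|\Delta\fhi\|^2\le c(\|\sigma\|^2+\|\nabla\mu\|+1)$, whence $\|\Delta\fhi\|^4\le c(\|\sigma\|^4+\|\nabla\mu\|^2+1)$; once $\sigma\in L^\infty(0,T;H)$ is known, the right-hand side is in $L^1(0,T)$ and the $L^4(0,T;H^2)$ bound follows.
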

\noindent%
Under the assumptions of the previous theorem (including in particular\/
{\rm \eqref{smallness}} in the three-dimensional case), we can prove
additional regularity of solutions for constant {mobilities}
provided that also the initial data are smoother. This is stated 
in the following theorem.
\begin{teor}\label{THM:REG:1}
 Suppose that {Assumptions}\/ {\bf (A1)-(A4)} hold,
 with $p=2$ in \eqref{b:logi2}, together with \eqref{hp:fhi0}-\eqref{hp:sigma0}. 
 Moreover, assume 
 \begin{equation}\label{mn:cost}
   \mobm(\fhi,\sigma) \equiv \mobn(\fhi,\sigma) \equiv 1,
 \end{equation}
 and, if $d=3$, assume also\/ \eqref{smallness}. 
 If the initial data satisfy the additional conditions
 \begin{equation}\label{init:reg}
     \fhi_0 \in   H_{\bf n}^2(\Omega), 
      \qquad \mu_0 := - \Delta \fhi_0 + f(\fhi_0) - \chi\sigma_0 \in V,
   \qquad \sigma_0 \in V,
 \end{equation}
 then, the weak solution $(\fhi,\mu,\sigma)$ provided by\/ {\rm Theorem~\ref{THM:WEAK}} 
 satisfies the following additional regularity properties:
 \begin{align}\label{reg:strong:2d:1}
   & \fhi \in W^{1,\infty}(0,T;V^*) \cap H^1 (0,T;V) 
     \cap L^\infty(0,T; W^{2,q}(\Omega)),\\
  \label{reg:strong:2d:1b}
   & F_1'(\fhi) \in L^\infty(0,T;L^{q}(\Omega)),\\
  \label{reg:strong:2d:2}
   & \mu \in \LIV,\\
  \label{reg:strong:2d:3}
   & \sigma  \in H^1 (0,T; H) \cap C^0([0,T]; V) \cap \L2 {\Hx2},
\end{align}
where $q=6$ if $d=3$ and $q\in[1,\infty)$ if $d=2$. Morover, 
\eqref{wf:1}-\eqref{wf:3} can be interpreted as equations holding a.e.~in~$\Omega$
with the boundary conditions holding in the sense of traces.
\end{teor}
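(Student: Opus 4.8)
The plan is to establish all the new bounds as uniform a priori estimates on the regularized Faedo--Galerkin (or time-discrete) approximation already employed for Theorem~\ref{THM:WEAK} (see Subsec.~\ref{subsec:appro}), in which the singular part $F_1'$ is replaced by a smooth monotone approximation preserving $F_1''\ge 0$ and the $L^1$-coercivity with uniform constants, and then to pass to the limit by weak compactness and lower semicontinuity. Throughout I exploit the constant mobilities \eqref{mn:cost}, so that \eqref{CH1b}--\eqref{nutrb} collapse to $\fhi_t-\Delta\mu=S(\fhi,\sigma)$, $\mu=-\Delta\fhi+f(\fhi)-\chi\sigma$ with $f'=F_1''-\lambda$, and $\sigma_t-\Delta\sigma+\chi\,\dive(\sigma\nabla\fhi)=b(\fhi,\sigma)$; and I build freely on the regularity \eqref{reg:2d:1}--\eqref{reg:2d:2} already granted by Theorem~\ref{THM:WEAK:2d3d}. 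In particular the bound $\fhi\in L^2(0,T;\Wx{2,q})$ with $q=6$ for $d=3$, hence $\fhi\in L^2(0,T;\Cx1)$, is the workhorse for the cross-diffusion term. The estimates are organized in three ordered blocks, so as to avoid any circular dependence, and the smoothness of $\fhi_0\in H^2_{\bf n}(\Omega)$ is what makes $\mu_0\in V$ in \eqref{init:reg} a meaningful datum.

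First I upgrade $\sigma$. Testing the nutrient equation with $\sigma_t$ gives
\[
\norma{\sigma_t}^2 + \frac12\ddt\norma{\nabla\sigma}^2 + \chi\io \dive(\sigma\nabla\fhi)\,\sigma_t = \io b(\fhi,\sigma)\,\sigma_t .
\]
The two dangerous contributions are the quadratic cross-diffusion on the left and the quadratic logistic source $-\kappa_\infty\beta(\fhi)\sigma^2$ hidden in $b$ on the right. Writing $\dive(\sigma\nabla\fhi)=\nabla\sigma\cdot\nabla\fhi+\sigma\Delta\fhi$ and using $\norma{\nabla\fhi}_\infty\le c\norma{\fhi}_{\Wx{2,q}}$ together with $\norma{\sigma\Delta\fhi}\le c\norma{\sigma}_V\norma{\fhi}_{\Wx{2,q}}$ (with $q=6$ if $d=3$), the cross term is absorbed by Young's inequality into $\tfrac14\norma{\sigma_t}^2$ plus a quantity $c\,(1+\norma{\fhi}_{\Wx{2,q}}^2)\,\norma{\sigma}_V^2$ whose prefactor lies in $L^1(0,T)$; the source is controlled by exploiting the dissipative sign of $-\sigma^2$ and the smallness \eqref{smallness} (which, with \eqref{n:cost}, is exactly what already secured \eqref{reg:2d:1}), after first securing an $L^\infty(0,T;L^p(\Omega))$ bound on $\sigma$ from the logistic damping, as in the Keller--Segel theory. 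A Gronwall argument started from $\sigma_0\in V$ yields $\sigma\in H^1(0,T;H)\cap C^0([0,T];V)$; reading the equation as $-\Delta\sigma=-\sigma_t-\chi\,\dive(\sigma\nabla\fhi)+b\in L^2(0,T;H)$ with homogeneous Neumann data and invoking elliptic regularity then gives $\sigma\in L^2(0,T;\Hx2)$, i.e.\ \eqref{reg:strong:2d:3}.

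With $\sigma_t\in L^2(0,T;H)$ in hand, I run the ``second energy estimate'' for the Cahn--Hilliard part. Differentiating $\mu=-\Delta\fhi+f(\fhi)-\chi\sigma$ in time and testing with $\psi:=\fhi_t$ gives $\io\mu_t\psi=\norma{\nabla\psi}^2+\io f'(\fhi)\psi^2-\chi\io\sigma_t\psi$, while $\psi=\Delta\mu+S$ yields $\io\mu_t\psi=-\tfrac12\ddt\norma{\nabla\mu}^2+\io\mu_t S$; inserting $\io\mu_t S=\ddt\io\mu S-\io\mu S_t$ and $f'=F_1''-\lambda$ I arrive at
\[
\frac12\ddt\Big(\norma{\nabla\mu}^2-2\io\mu S\Big)+\norma{\nabla\psi}^2+\io F_1''(\fhi)\,\psi^2=\lambda\norma{\psi}^2+\chi\io\sigma_t\,\psi-\io\mu\,S_t .
\]
The crucial structural gain is that the singular potential enters only through the nonnegative, naturally weighted term $\io F_1''(\fhi)\psi^2$, which may simply be discarded; no unweighted integral of $F_1''$ survives, so no separation property is needed. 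On the right, $\norma{\psi}^2$ is interpolated as $\varepsilon\norma{\nabla\psi}^2+c_\varepsilon\norma{\psi}_*^2$ with $\norma{\psi}_*=\norma{\Delta\mu+S}_*\le\norma{\nabla\mu}+c$, the Lipschitz bounds on $h$ give $\norma{S_t}\le c(\norma{\psi}+\norma{\sigma_t})$, and the coupling uses $\sigma_t\in L^2(0,T;H)$. Gronwall's lemma, started from $\norma{\nabla\mu(0)}=\norma{\nabla\mu_0}<\infty$ guaranteed by $\mu_0\in V$ in \eqref{init:reg}, then delivers $\nabla\mu\in L^\infty(0,T;H)$ and $\fhi_t=\psi\in L^\infty(0,T;\Vp)\cap L^2(0,T;V)$, i.e.\ the time regularity in \eqref{reg:strong:2d:1}.

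It remains to upgrade the spatial regularity and to record the \emph{main obstacle}. The mean $\mu\OO=\overline{f(\fhi)}-\chi\,\sigma\OO$ is bounded once $\norma{F_1'(\fhi)}_{L^1(\Omega)}$ is, which follows from the classical estimate obtained by testing \eqref{CH2b} with $\fhi-\fhi\OO$ and using the convexity of $F_1$ together with the fact that $\fhi\OO$ stays in a compact subinterval of $(-1,1)$ --- precisely the role of the mass-source design \eqref{hp:S}--\eqref{hp:h1}. Combined with $\nabla\mu\in L^\infty(0,T;H)$ this gives $\mu\in L^\infty(0,T;V)$, i.e.\ \eqref{reg:strong:2d:2}. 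Finally, reading \eqref{CH2b} as the elliptic problem $-\Delta\fhi+F_1'(\fhi)=\mu+\lambda\fhi+\chi\sigma$ with $\dn\fhi=0$ and right-hand side in $L^\infty(0,T;L^q(\Omega))$ (here $L^6$ for $d=3$ via $V\hookrightarrow L^6$, any finite $q$ for $d=2$), the standard $W^{2,q}$-estimate for elliptic equations with monotone nonlinearity yields $F_1'(\fhi)\in L^\infty(0,T;L^q(\Omega))$ and $\fhi\in L^\infty(0,T;\Wx{2,q})$, i.e.\ \eqref{reg:strong:2d:1b} and the last part of \eqref{reg:strong:2d:1}; all bounds being uniform in the regularization parameter, the limit solution satisfies \eqref{wf:1}--\eqref{wf:3} a.e.\ and the boundary conditions in the trace sense. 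I expect the genuine difficulty to lie in the first block: closing the nutrient estimate against the quadratic cross-diffusion $\chi\,\dive(\sigma\nabla\fhi)$ and the quadratic source in three dimensions without blow-up, which is exactly what forces the constant-mobility \eqref{n:cost} and smallness \eqref{smallness} hypotheses and hinges delicately on the borderline embedding $\Wx{2,q}\hookrightarrow\Cx1$ at $q=6$; a secondary but nontrivial point is carrying out the time differentiation of the second block rigorously on the approximation while keeping every constant independent of the regularization parameter.
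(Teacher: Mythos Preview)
Your overall architecture---upgrade $\sigma$ first by testing \eqref{nutrb} with $\sigma_t$, then run the second energy estimate for the Cahn--Hilliard block---is a viable decoupled variant of what the paper does. The paper instead couples the two blocks into a single Gronwall: it tests \eqref{CH1b} by $\mu_t$, the time-differentiated \eqref{CH2b} by $\fhi_t$, and \eqref{nutrb} by $\sigma_t$ \emph{simultaneously}, collecting everything into the functional $\calJ=\tfrac12\|\nabla\mu\|^2-\io S(\fhi,\sigma)\mu+\tfrac12\|\sigma\|_V^2$ and estimating $\ddt\calJ$ at once (see \eqref{key:11}--\eqref{key:11c}). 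Your sequential route is conceptually cleaner and does close under the same hypotheses; the paper's combined route has the minor advantage of checking integrability of the Gronwall coefficients only once.

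There is, however, a genuine gap in your first block: the mechanism you invoke to control the logistic source $\io\beta(\fhi)(\kappa_0\sigma-\kappa_\infty\sigma^2)\sigma_t$ does not work. There is no ``dissipative sign of $-\sigma^2$'' here, since $\beta(\fhi)$ depends on time through $\fhi$ and hence $-\kappa_\infty\io\beta(\fhi)\sigma^2\sigma_t$ is not a time derivative of anything with a sign; the smallness \eqref{smallness} is not used at this stage either (it was already spent in Theorem~\ref{THM:WEAK:2d3d} to secure $\sigma\in L^2(0,T;V)$); and no preliminary $L^\infty(0,T;L^p)$ bound ``from the logistic damping'' is available or needed. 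The correct treatment---which \emph{does} fit your sequential scheme---is the one in \eqref{r2:14}: by Young, $\io b(\fhi,\sigma)\sigma_t\le\tfrac18\|\sigma_t\|^2+c(1+\|\sigma\|_4^4)\le\tfrac18\|\sigma_t\|^2+c(1+\|\sigma\|_V^4)$, then split $\|\sigma\|_V^4=\|\sigma\|_V^2\cdot\|\sigma\|_V^2$, use the first factor as an $L^1(0,T)$ Gronwall coefficient (available from \eqref{reg:2d:2}), and absorb the second into the quantity $\|\nabla\sigma\|^2$ being estimated (together with the already known $L^\infty(0,T;H)$ bound on $\sigma$). With this fix your Block~1 closes; Blocks~2 and~3 are then correct as written and match the paper's argument.
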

\noindent%
The next result, valid only in the two-dimensional case, extends to the 
present system a regularity property holding for the Cahn--Hilliard equation 
for those singular potentials whose convex part 
fulfills the growth condition
\begin{align}\label{F:growth}
  |F_1''(r )| \leq e ^{C_F(|F_1'(r)|+1)}, \quext{for every }\, r \in (-1,1),
\end{align}
for some positive constant $C_F$. 
It is well-known that \eqref{F:growth} is satisfied by the logarithmic potential in \eqref{Flog};
as one can directly check, it also holds for ``more singular'' potentials,
like \eqref{Fsingsing}, such that $|F_1'(r)|$
behaves like a negative power of $1-|r|$ as $|r|\nearrow 1$. It does not hold, instead, in the 
case of the {\it double obstacle}\/ potential.
Nevertheless, whenever \eqref{F:growth} holds,
we can prove that, for smoother initial data, the solution $\fhi$ is ``separated''
from the singular values $\pm1$ in the uniform norm. 
This is stated in the following theorem.
\begin{teor}\label{THM:REG:2}
 Suppose that Assumptions\/ {\bf (A1)-(A4)} hold with $p=2$ in \eqref{b:logi2},
 together with \eqref{hp:fhi0}-\eqref{hp:sigma0}. 
 Moreover, assume that $d=2$, the potential fulfills\/ \eqref{F:growth},
 and\/ \eqref{mn:cost} holds. If the initial data satisfy the 
 additional conditions 
 \begin{equation}\label{init:reg:2}
     \fhi_0 \in  H_{\bf n}^2(\Omega), 
      \qquad \mu_0 := - \Delta \fhi_0 + f(\fhi_0) - \chi\sigma_0 \in H_{\bf n}^2(\Omega), 
      \qquad \sigma_0 \in V,
 \end{equation}
 then the weak solution $(\fhi,\mu,\sigma)$ provided by\/ {\rm Theorem~\ref{THM:WEAK}},
 in addition to the regularity stated in Theorem~\ref{THM:REG:1},
 satisfies the following additional properties:
 \begin{align}\label{reg:2d:3}
   &\fhi  \in \W{1,\infty} H \cap H^1 (0,T; {\Hx2}) 
       \cap L^\infty(0,T; \Hx4 \cap W^{2,q}(\Omega)), \quad q \in [2,\infty),\\
  \label{reg:2d:4}
   &\mu  \in \L\infty {\Hx2}  \cap \L2 {\Hx3},\\
   \label{reg:2d:5}
    & F''_1(\fhi) \in \L\infty {\Lx{q}}.
 \end{align}
 Moreover, if the initial data also satisfy
 \begin{equation}\label{init:sigmareg}
   \sigma_0 \in L^\infty(\Omega),
 \end{equation}
 then one also has
 \begin{equation}\label{sig:infty}
    \sigma \in L^\infty(Q)
 \end{equation}
 and there exists a computable constant $\delta\in(0,1)$ only depending on the 
 problem data such that the following ``separation property'' holds:
 \begin{equation}\label{separation}
   -1+\delta \le \fhi \leq 1-\delta \quad \text{a.e.~in }\,Q.
 \end{equation}
\end{teor}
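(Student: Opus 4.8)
The plan is to upgrade, step by step, the regularity already furnished by Theorem~\ref{THM:REG:1}, each estimate being first derived on the smooth approximating solutions (cf.\ Subsec.~\ref{subsec:appro}) and then transferred to the limit by weak/weak-$*$ compactness and lower semicontinuity. Recall that, since here $d=2$, the starting point already provides $\fhi\in H^1(0,T;V)$, $\mu\in\LIV$, $\sigma\in H^1(0,T;H)\cap C^0([0,T];V)\cap\L2{\Hx2}$, together with $F_1'(\fhi)\in\L\infty{\Lx q}$; moreover the strengthened datum $\mu_0\in H_{\bf n}^2(\Omega)$ in \eqref{init:reg:2} is exactly what makes $\fhi_t|_{t=0}=\Delta\mu_0+S(\fhi_0,\sigma_0)\in H$, which will be needed to start the time-differentiated estimate below.

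First I would prove the key higher-order bound $\fhi_t\in\LIH\cap\LDV$. To this end I differentiate \eqref{CH1b}-\eqref{CH2b} in time, set $w:=\fhi_t$, and test the time-differentiated mass equation with $\calN(w-w\OO)$, i.e.\ with the inverse Neumann Laplacian of the mean-free part of $w$ (the mean $w\OO=-m\fhi\OO+h\OO$ being under control through the mass balance and \eqref{hp:h1}). The decisive point is that, after substituting the differentiated \eqref{CH2b}, this choice produces the two favourable contributions $\|\nabla w\|^2$ and $\io F_1''(\fhi)(w-w\OO)^2\ge0$, the latter by convexity of $F_1$; in particular, testing against the inverse Laplacian \emph{avoids} the dangerous cubic term $\io F_1'''(\fhi)w^3$ that would arise by testing with $w$ itself. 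The remaining contributions (those carrying $\lambda$, the coupling $\chi\io\sigma_t(w-w\OO)$ with $\sigma_t\in\LDH$, the mean cross-term $w\OO\io F_1''(\fhi)(w-w\OO)$ split via $F_1''(\fhi)^{1/2}$, and $\io S_t\,\calN(w-w\OO)$) are absorbed into $\|\nabla w\|^2$ after interpolation, and a Gronwall argument yields $w=\fhi_t\in\LIH\cap\LDV$ together with $\io F_1''(\fhi)\fhi_t^2\in L^1(0,T)$.

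Second, I would bootstrap the elliptic regularity. Reading \eqref{CH1b} as $-\Delta\mu=S-\fhi_t$ with right-hand side in $\LIH$, elliptic regularity with the Neumann condition and the mean control on $\mu$ yields $\mu\in\L\infty{\Hx2}$; since $d=2$, $\Hx2\emb\Lx\infty$, so in fact $\mu\in L^\infty(Q)$. Using instead $\fhi_t\in\LDV$ gives $-\Delta\mu\in\LDV$ and hence $\mu\in\L2{\Hx3}$, which is \eqref{reg:2d:4}. Next I rewrite \eqref{CH2b} as the elliptic identity $-\Delta\fhi+F_1'(\fhi)=\mu+\lambda\fhi+\chi\sigma=:g$. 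Testing with $|F_1'(\fhi)|^{q-2}F_1'(\fhi)$ (for which $\io\nabla\fhi\cdot\nabla(|F_1'(\fhi)|^{q-2}F_1'(\fhi))\ge0$) controls $F_1'(\fhi)$ in $\L\infty{\Lx q}$ in terms of $\|g\|$; the growth condition \eqref{F:growth} then converts this, through the borderline exponential (Trudinger--Moser) embedding available in dimension two, into the bound $F_1''(\fhi)\in\L\infty{\Lx q}$, i.e.\ \eqref{reg:2d:5}. A parallel bootstrap of $\sigma$ via \eqref{nutrb} using the now-regular $\fhi$, fed back into the elliptic identity together with a differentiation of it, upgrades $\fhi$ to $L^\infty(0,T;\Hx4\cap W^{2,q}(\Omega))$ and, by comparison in \eqref{CH1b}, yields the time regularity in \eqref{reg:2d:3}.

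Finally, under the extra datum $\sigma_0\in\Lx\infty$, I would establish $\sigma\in L^\infty(Q)$ by a De~Giorgi/Moser iteration on \eqref{nutrb2} (with $\mobn\equiv1$): the super-linear logistic absorption $-\kappa_\infty\sigma^2$ (here $p=2$), together with $\beta\ge b_0>0$ on the relevant range and the now-bounded drift $\nabla\fhi$, dominates the cross-diffusion term $\chi\dive(\sigma\nabla\fhi)$ on the super-level sets, so that truncations of $\sigma$ obey a closed recursion controlled by $\|\sigma_0\|_{\Lx\infty}$ and the data. Once both $\sigma\in L^\infty(Q)$ and $\mu\in L^\infty(Q)$ are available, the right-hand side $g=\mu+\lambda\fhi+\chi\sigma$ of the elliptic identity lies in $L^\infty(Q)$ uniformly in $t$, and the strict separation \eqref{separation} is obtained by the standard two-dimensional argument for singular potentials: the Moser iteration gives $L^q$ bounds on $F_1'(\fhi(t))$ with controlled growth in $q$, and \eqref{F:growth} upgrades these to a uniform-in-time $L^\infty$ bound on $F_1'(\fhi)$, which — as $F_1'$ blows up at $\pm1$ — forces $\fhi$ to remain at distance $\delta>0$ from the pure phases, with $\delta$ computable from $\|g\|_{L^\infty(Q)}$ and from $\fhi\OO\in(-1,1)$ (kept away from $\pm1$ thanks to \eqref{hp:h1}). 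I expect this concluding step, and the $\sigma$-iteration feeding it, to be the main obstacle: it is genuinely two-dimensional, resting on the borderline exponential Sobolev embedding, and it crucially requires both the growth hypothesis \eqref{F:growth} and the logistic structure with $p=2$.
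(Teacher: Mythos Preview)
There is a genuine gap in your first step. Testing the time-differentiated equation with $\calN(\fhi_t-(\fhi_t)\OO)$ places only $\|\fhi_t-(\fhi_t)\OO\|_*^2$ under the time derivative, so Gr\"onwall yields $\fhi_t\in L^\infty(0,T;V^*)\cap L^2(0,T;V)$, \emph{not} $\fhi_t\in\LIH$ as you claim. But that regularity is already furnished by Theorem~\ref{THM:REG:1} (indeed $\fhi\in H^1(0,T;V)$ there), so your estimate adds nothing, and the subsequent bootstrap to $\mu\in\L\infty{\Hx2}$ fails because you never obtain $\fhi_t\in\LIH$. Moreover, your fear of a ``cubic term $\io F_1'''(\fhi)w^3$'' is misplaced: a single time differentiation of \eqref{CH2b} produces only $F_1''(\fhi)\fhi_t$, and testing that relation by $-\Delta\fhi_t$ (with the differentiated \eqref{CH1b} tested by $\fhi_t$) yields the cross term $\io F_1''(\fhi)\fhi_t\,\Delta\fhi_t$, not anything involving $F_1'''$.

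The paper reverses your order of operations, and this is essential. Starting from the regularity of Theorem~\ref{THM:REG:1} alone (in particular $F_1'(\fhi)\in\L\infty{\Lx q}$ and $\mu\in\LIV$), the growth condition \eqref{F:growth} combined with the two-dimensional Trudinger--Moser inequality already gives $F_1''(\fhi)\in\L\infty{\Lx q}$ for every $q<\infty$ --- this is the content of \cite[Lemmas~7.3--7.4]{GGW} and requires no time-differentiated estimate. With that bound in hand, the $H$-level test just described closes via
\[
  \io F_1''(\fhi)\fhi_t\,\Delta\fhi_t \le \|F_1''(\fhi)\|_4\|\fhi_t\|_4\|\Delta\fhi_t\|
   \le c\|\fhi_t\|_V^2 + \tfrac14\|\Delta\fhi_t\|^2,
\]
and Gr\"onwall then gives $\fhi_t\in\LIH\cap L^2(0,T;\Hx2)$, whence $\mu\in\L\infty{\Hx2}\cap\L2{\Hx3}$ and $\fhi\in\L\infty{\Hx3}$ by elliptic regularity. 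Your final two steps are in the right spirit, but once $\nabla\fhi\in L^\infty(Q)$ is available the paper obtains $\sigma\in L^\infty(Q)$ by a direct application of the Lady\v{z}enskaja--Solonnikov--Ural'ceva linear theory (the right-hand side of $\sigma_t-\Delta\sigma=G$ lies in $\L\infty H$), which is simpler than a Moser iteration; the separation then follows because the right-hand side of \eqref{CH2c} is in $L^\infty(Q)$.
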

\beos\label{rem:separ0}
 A direct check shows that, if \eqref{init:reg:2} and \eqref{sig:infty} hold,
 then the separation property \eqref{separation} holds at the initial time
 (i.e., its analogue is satisfied by $\fhi_0$). Hence, \eqref{separation}
 is fully compatible with \eqref{init:reg:2}. 
\eddos
\beos\label{rem:separ}
 The separation property \eqref{separation} is  extremely important for
 singular potentials like \eqref{Flog}.  Indeed, if \eqref{separation} holds, 
 then, the singularity of $F$ is no longer an obstacle 
 for the analysis as, actually, $\fhi$ is limited to 
 range in a closed subinterval of $(-1,1)$ where $F$ has controlled growth.
\eddos
\beos\label{rem:para}
 Given the parabolic nature of system~\SYS,
 most of our regularity results could be seen as smoothing properties of weak
 solutions (i.e., of solutions starting from ``energy regular'' initial data
 as those constructed in Theorem~\ref{THM:WEAK}), holding for strictly positive 
 times (provided that the required additional assumptions on
 coefficients, like for instance constant mobilities, hold). Of course,
 since uniqueness is not known to hold for weak solutions, we can assert that
 from energy regular initial data starts at least one weak solution that
 enjoys parabolic smoothing properties. However, we cannot exclude that there
 might exist other weak solutions which {\sl do not}\/ regularize in time.
\eddos 

\noindent%
Our last result is devoted to establishing uniqueness of solutions
in the case of constant
mobility functions. We prefer to formulate the result in a general version holding both
for $d=2$ and for $d=3$ though in a conditional way.
\begin{teor}[Uniqueness]\label{THM:UNIQ:2d}
 Suppose that assumptions\/ {\bf (A1)-(A4)} hold. Moreover, let $\mobm,\mobn,\beta \equiv 1$
 and $p=2$ in\/ \eqref{b:logi2}.
 Let us consider a couple of weak solutions $\{(\fhi_i, \mu_i, \sigma_i)\}_{i=1,2}$
 additionally satisfying 
 \begin{align}\label{cond:un1}
   & \fhi_1 \in L^2(0,T;W^{2,6}(\Omega)),\\
  \label{cond:un2}
   & \sigma_1 \in L^4(0,T;H),\\
  \label{cond:un3}
   & \sigma_2 \in L^4(0,T;L^6(\Omega)),
 \end{align} 
 associated to initial data $\{(\fhi_{0,i},\sigma_{0,i})\}_{i=1,2}$ fulfilling 
 \eqref{hp:fhi0}-\eqref{hp:sigma0} and \eqref{init:reg}.
 Let us also assume that either $h$ is a constant function, or 
 $F\in C^2(-1,1)$ and there hold the additional conditions
 \begin{equation}\label{cond:un4}
    F''(\fhi_1),F''(\fhi_2) \in L^2(0,T;H)
 \end{equation}
 as well as $\{(\fhi_{0,i},\sigma_{0,i})\}_{i=1,2}$ also fulfill \eqref{init:reg:2}.
 Then, $(\fhi_{1}, \mu_{1}, \sigma_{1}) \equiv (\fhi_2, \mu_2, \sigma_2)$ almost everywhere 
 in $Q$. 
\end{teor}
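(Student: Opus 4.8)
The plan is to derive a differential inequality for a suitable \emph{difference energy} and close it by Gronwall's lemma; since the two solutions are built from the same data, this forces $\Psi\equiv0$ and hence coincidence. Set $\phi:=\fhi_1-\fhi_2$, $\eta:=\mu_1-\mu_2$, $s:=\sigma_1-\sigma_2$ and subtract the two copies of \eqref{wf:mu}, \eqref{wf:1}, \eqref{wf:3}. Because the Cahn--Hilliard part is fourth order while the nutrient equation is second order, I would measure $\phi$ in the dual norm $\|\cdot\|_*$ (through the inverse Neumann Laplacian $\calN$) and $s$ in $L^2(\Omega)$, aiming at an inequality $\tfrac{d}{dt}\Psi+(\text{dissipation})\le C(t)\Psi$ with $\Psi\sim\|\phi\|_*^2+\|s\|^2$ and $C\in L^1(0,T)$. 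The regularity carried by weak solutions together with \eqref{cond:un1}-\eqref{cond:un3} makes all the pairings and integrations by parts below legitimate.

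First the order parameter. Writing $\overline\phi:=\tfrac1{|\Omega|}\io\phi$, I would test the difference of \eqref{wf:1} by $\calN(\phi-\overline\phi)$ and insert \eqref{wf:mu}; this yields $\tfrac12\tfrac{d}{dt}\|\phi-\overline\phi\|_*^2+\|\nabla\phi\|^2$ together with $(f(\fhi_1)-f(\fhi_2),\phi)$, whose singular convex part $f_1$ is monotone and therefore only needs to be kept with its favorable sign. The concave part $-\lambda\phi$ and the coupling $\chi(s,\phi-\overline\phi)$ are absorbed via $\|\phi-\overline\phi\|^2\le\varepsilon\|\nabla\phi\|^2+C\|\phi-\overline\phi\|_*^2$ into $\varepsilon\|\nabla\phi\|^2+C(\|\phi\|_*^2+\|s\|^2)$. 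The spatial mean is treated separately: integrating the difference of \eqref{wf:1} gives $\tfrac{d}{dt}\overline\phi=-m\overline\phi+\tfrac1{|\Omega|}\io\big(h(\fhi_1,\sigma_1)-h(\fhi_2,\sigma_2)\big)$. If $h$ is constant the last term vanishes and, the data being equal, $\overline\phi\equiv0$; otherwise the Lipschitz continuity of $h$ closes a Gronwall inequality for $|\overline\phi|^2$. The sole remaining $f$-contribution is then the mean term $\overline\phi\io(f(\fhi_1)-f(\fhi_2))$, which I would bound using $|f_1(\fhi_1)-f_1(\fhi_2)|\le(F_1''(\fhi_1)+F_1''(\fhi_2))\,|\phi|$ (a consequence of the convexity of $F_1$) together with \eqref{cond:un4}, so that its coefficient lies in $L^1(0,T)$.

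For the nutrient equation I would test the difference of \eqref{wf:3} by $s$, obtaining $\tfrac12\tfrac{d}{dt}\|s\|^2+\|\nabla s\|^2$ plus the logistic contribution $\kappa_\infty\io(\sigma_1+\sigma_2)\,s^2\ge0$, which by \eqref{reg:2:nonneg} keeps a favorable sign and is retained on the left, while $\kappa_0\|s\|^2$ is a harmless Gronwall term. The cross-diffusion difference, after integration by parts, reads $-\chi\io(\sigma_1\nabla\fhi_1-\sigma_2\nabla\fhi_2)\cdot\nabla s$, and I would split the vector field as $s\,\nabla\fhi_1+\sigma_2\nabla\phi$. The first piece is controlled by
\[
  -\chi\io s\,\nabla\fhi_1\cdot\nabla s\le\varepsilon\|\nabla s\|^2+C\,\|\nabla\fhi_1\|_{L^\infty(\Omega)}^2\,\|s\|^2,
\]
whose coefficient is integrable in time precisely because \eqref{cond:un1} and the embedding $W^{2,6}(\Omega)\emb W^{1,\infty}(\Omega)$ (valid for $d\le3$) give $\nabla\fhi_1\in L^2(0,T;L^\infty(\Omega))$.

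The heart of the argument, and the step I expect to be hardest, is the genuinely quadratic Keller--Segel term $-\chi\io\sigma_2\nabla\phi\cdot\nabla s$: it couples the two difference variables at gradient level, neither factor being at energy level, so a crude Cauchy--Schwarz bound produces a term that the available dissipation cannot absorb. The plan is to exploit the tailored integrability \eqref{cond:un2}-\eqref{cond:un3}: using $\sigma_2\in L^4(0,T;L^6(\Omega))$ and Hölder's inequality one reduces to estimating $\|\nabla\phi\|_{L^3(\Omega)}$, which by a Gagliardo--Nirenberg interpolation is split between the dissipation $\|\nabla\phi\|$ and a higher gradient norm available a priori for both solutions through $\fhi_i\in L^4(0,T;H^2(\Omega))$ and \eqref{cond:un1}; the time exponents are matched so that, after Young's inequality, only $\varepsilon$ times the dissipations $\|\nabla\phi\|^2,\|\nabla s\|^2$ survives together with a remainder whose coefficient belongs to $L^1(0,T)$. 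Once every right-hand side term is either absorbed by the dissipation or dominated by $C(t)\Psi$ with $C\in L^1(0,T)$, summing the two estimates and invoking Gronwall's lemma with $\Psi(0)=0$ gives $\Psi\equiv0$, whence $\phi\equiv0$ and $s\equiv0$; finally \eqref{wf:mu} yields $\eta\equiv0$, so $(\fhi_1,\mu_1,\sigma_1)=(\fhi_2,\mu_2,\sigma_2)$ a.e.\ in $Q$.
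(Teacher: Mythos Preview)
Your overall architecture for the $\phi$-part is essentially the paper's, but the treatment of the nutrient difference has a genuine gap.

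You test the $\sigma$-difference equation by $s$ itself, i.e.\ you measure $s$ in $H$, and then try to control the critical term $-\chi\io\sigma_2\,\nabla\phi\cdot\nabla s$ by interpolating $\|\nabla\phi\|_{3}$ between $\|\nabla\phi\|$ and ``a higher gradient norm available a priori for both solutions''. But Gagliardo--Nirenberg gives $\|\nabla\phi\|_{3}\le c\,\|\nabla\phi\|^{1/2}\|\phi\|_{H^2(\Omega)}^{1/2}$ with $\phi=\fhi_1-\fhi_2$; if you then bound $\|\phi\|_{H^2(\Omega)}\le\|\fhi_1\|_{H^2(\Omega)}+\|\fhi_2\|_{H^2(\Omega)}$, Young's inequality leaves you with a residual term
\[
   c\,\|\sigma_2\|_{6}^4\big(\|\fhi_1\|_{H^2(\Omega)}^2+\|\fhi_2\|_{H^2(\Omega)}^2\big)
\]
that does \emph{not} carry any factor of the difference energy $\Psi$. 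This is a forcing term, not a Gronwall term: after integrating you obtain $\Psi(t)\le e^{\int C}\Psi(0)+\int(\text{forcing})\,e^{\int C}$, and the second summand does not vanish when the data coincide. (There is also a time-integrability issue: $\|\sigma_2\|_{6}^4\in L^1(0,T)$ and $\|\fhi_i\|_{H^2(\Omega)}^2\in L^2(0,T)$ do not in general yield an $L^1$-product.) No alternative H\"older split at the $L^2$-level for $s$ avoids this, since any $\|\nabla\phi\|_q$ with $q>2$ forces a norm of $\phi$ beyond the available dissipation.

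The fix, and this is what the paper does, is to measure the $\sigma$-difference in $V^*$ as well: one splits $\sigma=\sigma-\sigma_\Omega$ plus mean, and tests the zero-mean equation by $\calN(\sigma-\sigma_\Omega)$. The cross-diffusion term then becomes
\[
   \chi\io\sigma_2\,\nabla\phi\cdot\nabla\calN(\sigma-\sigma_\Omega)
   \le c\,\|\sigma_2\|_{6}\,\|\nabla\phi\|\,\|\sigma-\sigma_\Omega\|_*^{1/2}\|\sigma-\sigma_\Omega\|^{1/2},
\]
using $\|\nabla\calN v\|_{3}\le c\,\|\calN v\|_V^{1/2}\|\calN v\|_{H^2(\Omega)}^{1/2}\le c\,\|v\|_*^{1/2}\|v\|^{1/2}$. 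After Young this yields $\eta\|\nabla\phi\|^2+\eta\|\sigma-\sigma_\Omega\|^2+c_\eta\|\sigma_2\|_{6}^4\|\sigma-\sigma_\Omega\|_*^2$: the remainder now multiplies $\|\sigma-\sigma_\Omega\|_*^2$, which \emph{is} part of $\Psi$, and $\|\sigma_2\|_{6}^4\in L^1(0,T)$ by \eqref{cond:un3}. The logistic difference term and the $s\,\nabla\fhi_1$ piece are handled the same way, with \eqref{cond:un2} supplying the missing $L^1$-in-time coefficient. This is the step you should rework.
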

\noindent%
\beos\label{oss:quanduniq}
We notice that conditions \eqref{cond:un1}-\eqref{cond:un2} 
are verified under the regularity setting of Theorem~\ref{THM:WEAK:2d3d}. 
The main obstacle is represented by \eqref{cond:un3}, which holds
only under the more restrictive conditions in Theorem~\ref{THM:REG:1}. 
Finally, the validity of \eqref{cond:un4} is limited to the two-dimensional
case under assumption \eqref{F:growth} and in the regularity
setting of Theorem~\ref{THM:REG:2}. 
\eddos
\beos\label{oss:dipcon}
It is worth noticing that, under the assumptions of the above theorem, a continuous
dependence estimate also holds. For instance, in the case of $h$ constant, one has
\begin{align}\notag
  & \big\| \fhi_1-\fhi_2-((\fhi_1)\OO - (\fhi_2)\OO) \big\|_{L^\infty(0,T;V^*)}^2
   + \| (\fhi_1)\OO - (\fhi_2)\OO \|_{L^\infty(0,T)}^2
   + \| (\fhi_1)\OO - (\fhi_2)\OO \|_{L^\infty(0,T)}\\
 \no  
  & \quad\qquad + \big\| \sigma_1-\sigma_2-((\sigma_1)\OO - (\sigma_2)\OO) \big\|_{L^\infty(0,T;V^*)}^2
   + \| (\sigma_1)\OO - (\sigma_2)\OO \|_{L^\infty(0,T)}^2\\
 \no
  & \quad\qquad
  + \| \fhi_1-\fhi_2 \|_{L^2(0,T;V)}^2
  +\| \sigma_1-\sigma_2 \|_{L^2(0,T;H)}^2 \\
 \no
  & \quad \leq K \Big( \big\| \fhi_{0,1} - \fhi_{0,2}- ((\fhi_{0,1})\OO - (\fhi_{0,2})\OO) \big\|_{V^*}^2
  + | (\fhi_{0,1})\OO - (\fhi_{0,2})\OO |^2
  + | (\fhi_{0,1})\OO - (\fhi_{0,2})\OO |\\
 \label{cont:dep1}
  & \quad\qquad
  + \big\| \sigma_{0,1} - \sigma_{0,2}- ((\sigma_{0,1})\OO - (\sigma_{0,2})\OO) \big\|_{V^*}^2
  + | (\sigma_{0,1})\OO - (\sigma_{0,2})\OO |^2 \Big),
\end{align} 
for some $K$ depending only on the known data, including the 
norms in \eqref{cond:un1}-\eqref{cond:un2}. 
\eddos
\beos\label{oss:altuniq}
Aiming at reducing the technical burden, Theorem~\ref{THM:UNIQ:2d} is proved by
considering the two- and three-dimensional cases together. As a consequence,  
it is worth noticing that conditions \eqref{cond:un1}-\eqref{cond:un3}
are unlikely to be optimal, especially in dimension two where better inequalities hold,
and may be in fact replaced by other similar assumptions. For instance, it will
be noted in the proof that \eqref{cond:un3} might be replaced by 
\begin{equation*}
   \sigma_2 \in L^\infty(0,T;L^{3+\epsi}(\Omega)) 
    \quext{for some }\,\epsi>0. 
\end{equation*}
\eddos


\section{Well-posedness}
\label{sec:well}
This section is devoted to the proof of Theorem \ref{THM:WEAK}, which will be split
into several parts presented in separate subsections.


\subsection{Mass dynamics}

\label{sub:mass}

The main tool in the existence proof consists in the derivation of suitable 
a-priori estimates. For the sake of simplicity, these will be presented 
by working on a triplet $(\fhi,\mu,\sigma)$ solving the original 
system~\eqref{CH1b}-\eqref{nutrb} plus the initial and boundary conditions, 
without referring to any explicit approximation or regularization of it. 
In Subsection~\ref{subsec:appro} below we will propose a regularization
of the system and explain how the formal estimates derived here may
be adapted to the rigorous framework.

In this respect, it is worth observing from the very beginning that 
a crucial point stands in the fact that the coercivity of the energy functional
$\calF$ (cf.~\eqref{freeenergy}) is tied to the choice of a ``singular''
potential $F$. Hence, dealing with the original (i.e., non-regularized
system) and assuming in particular 
that the component $\fhi$ of the solution
satisfies the a-priori information
\begin{equation}\label{constr}
  - 1 \le \fhi(x,t) \le 1 
   \quext{for a.e.~}\,(x,t)\in Q
\end{equation}
represents a real simplification at this level. 
Indeed, let us recall the expression of the energy functional $\calF$,
namely (recall that $\eps=1$)
\begin{equation*}
  {\cal F}(\fhi, \sigma) = \frac 12 \int_\Omega |\nabla \fhi |^2 
   + \io F(\fhi)
   + \io \sigma (\ln \sigma - 1) 
   + \chi \io \sigma (1-\fhi).
\end{equation*}
Then, it is clear that, as far as \eqref{constr} holds, 
the last term is nonnegative due to the expected positive sign of the variable $\sigma$. 
As a consequence, $\calF$ turns out to be
coercive: it is easy to check that there exists a computable 
constant $C>0$ depending only on the known data such that
\begin{equation}\label{co:1a}
 \big( C + {\cal F}(\fhi, \sigma) \big)
   \ge \frac 12 \| \fhi \|_V^2 + \frac12 \| \sigma \ln \sigma \|_{{1}}.
\end{equation}
Notice that the above still holds when $\fhi\in L^\infty$, even if \eqref{constr}
is not known to hold. Indeed, the coupling term can then be controlled as follows
\begin{equation}\label{coerc:F}
  \chi \bigg| \io \sigma (1-\fhi) \bigg|
   \le \chi \| \sigma \|_{{1}} \big( 1 + \| \fhi \|_{{\infty}} \big)
   \le \frac12 \| \sigma \ln \sigma \|_{{1}} - c,   
\end{equation}
where the last $c$ also depends on the $L^\infty$-norm of $\fhi$.

On the other hand, if the singular potential $F$ is replaced by a function
of controlled growth, then the boundedness of $\fhi$ is lost, \eqref{coerc:F}
cannot be used, and consequently the energy functional loses its coercivity.
This is the main issue we will need to fix when detailing the 
approximation in Subsection~\ref{subsec:appro}.

That said, we first show that, under assumptions \eqref{hp:S}-\eqref{hp:h1} the spatial
mean of $\fhi$ is constrained to take values in the  physical interval $(-1,1)$ 
for every $t\ge 0$. Actually, testing \eqref{CH1} by $|\Omega|^{-1}$ and setting
for simplicity $y=\fhi\OO$, we deduce the ODE-like relation
\begin{equation*}
  y ' + m y = \frac1{|\Omega|} \io h(\fhi,\sigma),
\end{equation*}
whence, using \eqref{hp:h1}, we obtain the differential inequalities
\begin{equation*}
  - H \le y ' + m y \le H.
\end{equation*}
Consequently, it holds that, for every $t \in [0,T]$,
\begin{equation*}
  y(0) e^{-mt} + ( 1 - e^{-mt}) \Big( - \frac{H}{m} \Big)
   \le y(t) 
   \le y(0) e^{-mt} + ( 1 - e^{-mt}) \frac{H}{m}.
\end{equation*}
Using again \eqref{hp:h1} and recalling the last assumption in \eqref{hp:fhi0}, 
we then deduce that, for some $\delta > 0$ depending only on $\fhi_0$, $H$ and $m$, there holds
\begin{equation}\label{medie}
  \big|(\fhi(t))\OO\big|\le 1 - \delta \quext{for every }\,t\in[0,T],
\end{equation}
which entails that the total mass of $\fhi$ is prevented to reach the critical values $\pm1$.
Of course the same property \eqref{medie} holds also when one takes $S\equiv 0$
because the spatial mean of $\fhi$ is conserved in that case.


\subsection{Formal energy estimate}
\label{subsec:ene}

First of all, we observe that, using assumption \eqref{hp:sigma0} and applying 
a standard minimum principle argument, there follows that $\sigma\ge 0$ almost
everywhere in~$Q$.
Then, testing \eqref{CH1b} by $\mu$, \eqref{CH2b} by $\fhi_t$, and taking
the difference, we infer
\begin{equation}\label{co:11}
  \ddt \calE(\fhi) - \chi\io \sigma \fhi_t + \io \mobm(\fhi,\sigma) | \nabla \mu |^2 
   = \io S(\fhi,\sigma) \mu,
\end{equation}
where we recall that $\calE$ denotes the standard Ginzburg--Landau
energy introduced in \eqref{freeenergy} (with $\eps=1$).
Next, testing \eqref{nutrb2} by $\ln \sigma + \chi (1-\fhi)$, 
and setting for the sake of simplicity $L(\sigma):= \sigma (\ln \sigma - 1)$, we obtain 
\begin{align}\no
  & \ddt \io L(\sigma)
   + \chi \io \sigma_t (1-\fhi)
   + \io \sigma \mobn(\fhi,\sigma) \big| \nabla ( \ln \sigma + \chi (1-\fhi) ) \big|^2 \\
 \label{co:12}   
  & \mbox{}~~~~~  
   = \io \beta(\fhi) (\kappa_0\sigma - \kappa_\infty \sigma^p)   
   \big( \ln \sigma + \chi (1-\fhi) \big).
\end{align}
Adding \eqref{co:11} with \eqref{co:12} and rearranging, 
using also \eqref{mob:bou}, we deduce
\begin{align}\no
  & \ddt \bigg[ \calE(\fhi) 
  + \io \big( L(\sigma) + \chi \sigma (1-\fhi) \big) \bigg]\\ 
  & \notag \qquad
   + m_0 \io \sigma \big| \nabla ( \ln \sigma {+ \chi (1-\fhi)} ) \big|^2 
   + m_0 \| \nabla \mu \|^2 
   + \kappa_\infty \io \beta(\fhi) \sigma^p \ln \sigma \\ 
 \label{co:13}
  & \mbox{}~~~~~ \leq \io S(\fhi,\sigma) \mu
   + \kappa_0 \io \beta (\fhi)\sigma \ln \sigma 
     + \chi \io \beta(\fhi) (\kappa_0\sigma - \kappa_\infty \sigma^p)   (1-\fhi).
\end{align}
To control the first term on the \rhs, we observe that, replacing the 
expression of $\mu$ given by equation \eqref{CH2b} and using~{\bf (A2)} 
along with \eqref{constr} and the Poincar\'e--Wirtinger inequality \eqref{poincare}, there follows
\begin{equation}\label{co:14}
  \io S(\fhi,\sigma) \mu 
   = \io S(\fhi,\sigma) ( \mu - \mu\OO ) 
    + \mu\OO \io S(\fhi,\sigma)
    \le (m + H) c_\Omega \| \nabla \mu \|
    + | \Omega | (m + H) | \mu\OO |,
\end{equation}
where $c\OO>0$ is a Poincar\'e constant.
Now, integrating \eqref{CH2b} over $\Omega$ and recalling that $\sigma \ge 0$
almost everywhere in $Q$ and \eqref{BC}, we have
\begin{equation}\label{co:15}
  | \Omega |  | \mu\OO | 
   \le \| f(\fhi) \|_{1}
   + \chi \io \sigma.
\end{equation}
Replacing \eqref{co:14} and \eqref{co:15} into
\eqref{co:13}, using \eqref{beta:bou} and \eqref{beta:bou2}
with the fact $1-\fhi\ge 0$ (which is, in turn,
a consequence of \eqref{constr}),
it is not difficult to obtain
\begin{align}\no
  & \ddt {\cal F}(\fhi, \sigma)
   + m_0 \io \sigma \big| \nabla ( \ln \sigma {+ \chi (1-\fhi)} ) \big|^2
   + m_0 \| \nabla \mu \|^2
   + \kappa_\infty b_0 \io ( \sigma^p \ln \sigma + 2) \\
 \no
   & \mbox{}~~~~~ 
   \le c 
    + c \io \sigma 
    + c \| \nabla \mu \| 
    + (m + H) \| f(\fhi) \|_{1}\\
 \no
   & \mbox{}~~~~~~~~~~  
    + \kappa_0 \io \beta(\fhi) \sigma \ln \sigma 
    + \chi \io \beta(\fhi) (\kappa_0\sigma - \kappa_\infty \sigma^p)   (1-\fhi)\\
 \label{co:19b}
   & \mbox{}~~~~~ \le 
    c 
    + c \io \sigma 
    + \frac{m_0}4 \| \nabla \mu \|^2 
    + (m + H) \| f(\fhi) \|_{{1}} 
    + \kappa_0 B \io | \sigma \ln \sigma |
     + \chi \kappa_0 B \io \sigma (1-\fhi),
\end{align}
where we recall that $\cal F$ was defined in \eqref{freeenergy}
(with $\eps=1$).
In order to control the norm of $f(\fhi)$ on the \rhs, we 
test \eqref{CH2b} by $\fhi - \fhi\OO$ to get
\begin{equation}\label{co:16}
   \io f(\fhi) (\fhi - \fhi\OO)
    + \| \nabla \fhi \|^2
    = \io \mu(\fhi-\fhi\OO) 
    + \chi \io \sigma(\fhi-\fhi\OO).
\end{equation}
Using the mass property \eqref{medie} and proceeding similarly as in \cite{MZ} 
(see also, e.g., \cite{FG,KNP}) we then deduce 
that there exist two constants $\alpha>0$ (small) and $c>0$ (large),
both depending on the constant $\delta$ in \eqref{medie}, such that
\begin{equation}\label{co:17}
   \io f(\fhi) (\fhi - \fhi\OO)
   \ge \alpha\| f(\fhi) \|_{{1}} - c.
\end{equation}
Hence, recalling \eqref{constr} and operating 
straightforward manipulations in \eqref{co:16} leads to
\begin{align}\no
  \alpha\| f(\fhi) \|_{1} 
   & \le c \| \nabla \mu \| \| \nabla \fhi \|
    + 2\chi  \io \sigma 
    + c\\
 \label{co:18}   
   & \le c_{\eps} \| \nabla \fhi \|^2
    + \eps \| \nabla \mu \|^2
    + 2\chi  \io \sigma 
    + c,
\end{align}
where $\eps >0$ can be taken arbitrarily small, in a way that will be 
specified later on, and $c_\eps>0$ depends on the choice of $\eps$.

Next, we multiply \eqref{co:18} by $M>0$ large enough such that 
$M\alpha\ge m + H + 1$; 
then, we choose $\eps$ small enough such that 
$M\eps\le m_0/4$. Finally, we add the result of this
operation to \eqref{co:19b} deducing that
\begin{align*}\no
  & \ddt 
    {\cal F}(\fhi, \sigma)
   + m_0 \io \sigma \big| \nabla ( \ln \sigma {+ \chi (1-\fhi)}) \big|^2
   + \frac{m_0}2 \| \nabla \mu \|^2
   + {\alpha}M \| f(\fhi) \|_{1}
      + \kappa_\infty b_0 \io (\sigma^p \ln \sigma + 2) \\
   & \mbox{}~~~~~ \le c \| \nabla \fhi \|^2
   + c
   + c \io \sigma
   + \kappa_0 B \io | \sigma \ln \sigma |
   + \chi \kappa_0 B \io \sigma (1-\fhi).
\end{align*}
In order to get an estimate from the above relation, we observe that,
since $p>1$, we have
\begin{equation*}
  c \io \sigma
    + \kappa_0 B \io | \sigma \ln \sigma |
    + \chi \kappa_0 B \io \sigma (1-\fhi)
   \le c + \frac{\kappa_\infty b_0}2 \io (\sigma^p \ln \sigma + 2).
\end{equation*}
Then, we may set 
\begin{equation*}
  \calV :=  C + 
   {\cal F}(\fhi, \sigma),
\end{equation*}
where $C>0$ is chosen such that the coercivity property \eqref{co:1a}
holds. As noted above, here we are using the constraint
\eqref{constr} in an essential way. With these choices, 
we arrive at the differential inequality 
\begin{equation}\label{en:es}
  \ddt\calV   
   + m_0 \io \sigma \big| \nabla ( \ln \sigma {+ \chi (1-\fhi)} ) \big|^2
   + \frac{m_0}2 \| \nabla \mu \|^2
   + {\alpha}M \| f(\fhi) \|_{1}
   + \frac{\kappa_\infty b_0}2 \io( \sigma^p \ln \sigma + 2)
   \le c \calV.
\end{equation}
Noting that $\calV(0)<\infty$ thanks to \eqref{hp:fhi0}-\eqref{hp:sigma0}, 
we can then apply Gr\"onwall's lemma to deduce the following 
a-priori bounds:
\begin{align} \notag
  & \norma{\fhi}_{\LIV}
   + \| \sigma \ln \sigma \|_{L^\infty(0,T;L^1(\Omega))} 
   + \norma{ F(\fhi) }_{\L\infty {\Lx1}}
   + \norma{ \sigma^{1/2} \nabla ( \ln \sigma {+ \chi (1-\fhi)} ) }_{\L2 H}\\ 
 \label{en:es:formal}
  & \quad 
   + \norma{\nabla \mu}_{\L2 {H}}
   + \norma{f(\fhi)}_{\L1 {\Lx1}}
   + \norma{\sigma^p \ln \sigma}_{\L1 {\Lx1}}
  \le c.
\end{align}
Here and below, it is intended that the constant $c>0$ on the \rhs,
whose explicit value may vary on occurrence, depends only 
on the known data of the problem, including the initial data, but is independent
of any hypothetical approximation parameter.

Next, going back to \eqref{co:18}, squaring its first row, and integrating in
time, using also the information resulting from \eqref{en:es:formal},
we infer
\begin{equation}\label{st:11}
  \norma{f(\fhi)}_{L^2(0,T;L^1(\Omega))} \le c.
\end{equation}
Then, we go back to \eqref{co:15}: squaring and integrating in time, 
using \eqref{en:es:formal} and \eqref{st:11} lead us to $\norma{\mu_{\Omega}}_{L^2(0,T)}\leq c$, 
and applying once more the Poincar\'e--Wirtinger inequality, we arrive at 
\begin{equation*}
  \norma{\mu}_{L^2(0,T;V)} \le c.
\end{equation*}
Next, comparing terms in \eqref{CH1b} and using in particular assumptions {\bf (A2)}
and \eqref{mob:bou}, it is not difficult to deduce 
\begin{equation}\label{st:13}
  \norma{\fhi_t}_{L^2(0,T;V^*)} \le c.
\end{equation}
Finally, we observe that \eqref{CH2b}, complemented with the no-flux boundary condition,
can be interpreted as a family of time-dependent elliptic problems
with maximal monotone perturbations of the form
\begin{equation}\label{CH2c}
  - \Delta \fhi + F_1'(\fhi) = \mu + \lambda\fhi + \chi\sigma.
\end{equation}
Here, we observe that, since we assumed $p\le 2$, the maximal summability
available for the \rhs\ is exactly the $L^p$-one. Hence, applying standard tools
(which basically correspond to testing \eqref{CH2c} 
by $|F_1'(\fhi)|^{p-1}\sign(F_1'(\fhi))$ and exploiting the monotonicity of $F_1'$),
we deduce the additional estimates
\begin{equation}\label{st:14}
  \norma{F_1'(\fhi)}_{L^p(0,T;L^p(\Omega))} 
  + \norma{\Delta\fhi}_{L^p(0,T;L^p(\Omega))} 
  \le c.
\end{equation}
Suppose now that $ b$ has a true logistic growth, that is \eqref{b:logi2} 
is fulfilled with $p=2$. Then, repeating the energy estimate in a decoupled fashion, 
i.e., by testing \eqref{CH2b} by $-\Delta \fhi$ and \eqref{nutrb}
by $\ln\sigma$ it is not difficult to arrive at
\begin{align*}\no
  & \ddt\io \sigma (\ln \sigma - 1)
   + 4 m_0 \| \nabla \sigma^{1/2} \|^2
   + b_0 \kappa_\infty\io \sigma^2 \ln \sigma
   + \| \Delta \fhi \|^2
   + \io f_1'(\fhi)|\nabla \fhi|^2 \\
 \no
  & \mbox{}~~~~~ \le -2\chi \io \sigma \Delta \fhi 
   + \kappa_0\io  \beta(\fhi) \sigma\ln\sigma
   + \io \nabla\mu\cdot\nabla \fhi
   - \lambda \io  |\nabla \fhi|^2\\
 \no
  & \mbox{}~~~~~ \le \frac12 \| \Delta \fhi \|^2
   + c \io \sigma^2 
   + c
   + c \| \nabla\mu \|\\
  & \mbox{}~~~~~ \le \frac12 \| \Delta \fhi \|^2
   + \frac {b_0 \kappa_\infty}2 \io \sigma^2 \ln \sigma 
   + c.
\end{align*}
Hence, the cross-diffusion terms are decoupled, both in dimension two
and in dimension three, and we obtain the additional estimate
\begin{align}
\label{st:62old}
  & \| \sigma^{1/2} \|_{L^2(0,T;V)} \le c.
\end{align}


\subsection{Approximation scheme}
\label{subsec:appro}

In this part, we outline a possible regularization scheme for system~\SYS. 
Usually, in Cahn--Hilliard-based models, approximation is provided by
smoothing out the singular term (here represented by $F_1'$) and replacing
it with a Lipschitz continuous function. In this way, at least locally in time,
existence of approximate solutions may be proved for instance by 
using a Faedo--Galerkin or time discretization scheme.
Here, however, a further difficulty arises because the coercivity
of the free energy~\eqref{freeenergy} is tied to the presence of the singular
potential $F_1$. In other words, if $F_1$ is smoothed out, it is
also necessary to intervene on the coupling term $\chi\sigma (1-\fhi)$ by suitably
truncating it; indeed, approximating $F_1$, the property $|\fhi|\le 1$ is 
lost and the coupling term becomes supercritical if it is not
smoothed out. This is why, similarly, e.g., to \cite{HH}, 
we propose a regularized scheme where also $\sigma$ is properly 
truncated. Namely, we consider the system
\begin{alignat}{2}\label{CH1n}
  & \fhi_t -\dive\big(\mobm(\fhi,\sigma) \nabla \mu\big) = S(\fhi,\sigma) \quad && \text{in $Q$,} \\
  \label{CH2n}
  & \mu = - \Delta \fhi + F_n'(\fhi) - \lambda\fhi - \chi T_n(\sigma) \quad && \text{in $Q$,}\\
 \label{nutrn}
  & T_n(\sigma)_t - \dive \big(\mobn (\fhi,\sigma)  \nabla \sigma \big)
  - \chi \dive \big(\sigma \mobn (\fhi,\sigma)  \nabla(1-\fhi) \big) 
    = \beta(\fhi) (\kappa_0\sigma - \kappa_\infty \sigma^p) \quad && \text{in $Q$,}
\end{alignat}
with $n\in\NN$ denoting the regularization parameter, 
intended to go to infinity in the limit. 
As for the boundary and initial conditions,
we consider the same as in \eqref{BC} and \eqref{init}.
In this part we will
avoid employing the subscript $n$ to denote 
the approximate solution for the sake of notational clarity.
Here, we assume the following properties. First of all, $\{ F_n\}$, 
with $F_n:\RR \to \RR$, 
is a family of convex and regular functions such that, as $n\to \infty$, $F_n$ 
tends to $F_1$ in the sense of {Mosco}. We refer to \cite[Chap.~3]{At}
for the necessary background in convex analysis; we just observe that 
a simple condition ensuring this property holds when, for every fixed
$r\in\RR$, $F_n(r)$ is increasingly monotone with respect to $n$ 
and converges to the limit value $F_1(r)$ (which is intended to be $+\infty$
as far $|r|$ is larger than $1$, cf.~{\bf (A1)}).
We also assume the normalization property
\begin{equation}\label{ap:11}
   F_n'(r)\sign r \ge n^3 (|r|-1) \quext{for every }\, |r|\ge 1. 
\end{equation}
Indeed, it is apparent that, for every potential $F_1$ compatible with
assumption~{\bf (A1)}, an approximation $F_n$ satisfying the above conditions
can be constructed by standard methods. For instance one could take the 
Yosida approximation (see \cite{Ba,Br}) of $F_1$ of order $n^{-1}$ and 
add to it $n^3 (|r|-1)_+\sign r$. 

Concerning the truncation operator $T_n$ we assume the following properties:
\begin{align}\label{Tn11}
  & T_n \in C^{1,1}(\RR); \qquad T_n(r)=r \quext{for every }\,r\le n,\\  
 \label{Tn12}
  & T_n~~\text{is strictly monotone and concave},\\
 \label{Tn13}
  & T_n(r) < n+1 \quext{for every }\,r\in\RR,
   \qquad \lim_{r\to\infty} T_n(r) = n+1. 
\end{align}
Explicit forms of $T_n$ can be also constructed very easily. 

We also need to introduce its inverse function, that is, $\gamma_n:=T_n^{-1}$. Then,
$\gamma_n\in C^1((-\infty,n+1);\RR)$ and $\gamma_n$ can also be interpreted 
as a maximal monotone graph in $\RR\times\RR$ so to apply the usual machinery
of maximal monotone operator theory. Then, we also set 
\begin{equation*}
   s:= T_n(\sigma) \qquext{so that }\,\sigma=\gamma_n(s)
\end{equation*}
and equation \eqref{nutrn} can be consequently restated in terms 
of the new variable $s$ in the following equivalent way:
\begin{equation}\label{nutrns}
  s_t - \dive \big(\mobn (\fhi,\gamma_n(s))  \nabla \gamma_n(s) \big)
  - \chi \dive \big(\gamma_n(s) \mobn (\fhi,\gamma_n(s))  \nabla(1-\fhi) \big) 
    = \beta(\fhi) (\kappa_0\gamma_n(s) - \kappa_\infty \gamma_n(s)^p).
\end{equation}
Of course, also relations \eqref{CH1n}-\eqref{CH2n} could be equivalently
reformulated in terms of $s$. 
Besides, by using the variable $s$, the initial conditions
may be expressed as follows:
\begin{equation}\label{initns}
  s|_{t=0} = s_0 = T_n(\sigma_0), \qquad 
  \fhi|_{t=0} = \fhi_0.
\end{equation}
Dealing with the regularized system \eqref{CH1n}-\eqref{nutrn},
complemented with the initial conditions \eqref{initns} and with the no-flux
boundary conditions, may still be nontrivial. 
Indeed, equation \eqref{nutrns} contains the {\sl singular}\/
function $\gamma_n$. A strategy {that} could be used in order to obtain 
at leas local in time existence can be sketched as follows:
\begin{itemize}
 \item[(A)] Smoothing out the function $\gamma_n$ (for instance by replacing in with its 
 Yosida regularization $\gamma_{n,\epsilon}$ for a
 regularization parameter $\epsilon$ intended to go 
 to zero in the limit); one may also need to add further regularizing terms
 to get better properties of approximating solutions;
 \item[(B)] Proving local in time existence to the obtained system {through} the 
 Faedo--Galerkin method;
 \item[(C)] Getting a-priori estimates uniform with respect to $\epsilon$ and,
 exploiting these, passing to the limit with respect to the approximation parameter 
 $\epsilon$ so to obtain a solution to \eqref{CH1n}-\eqref{nutrn} with the initial
 and boundary conditions. 
\end{itemize}
As said, even if the above strategy (A)-(C) may be not trivial, we believe that
the main difficulties are just of technical nature. Indeed, equations of the form
\begin{equation}\label{stefan}
  s_t - \Delta( \gamma(s) ) = f,
\end{equation}
with $\gamma$ maximal monotone graph (possibly, as here, of singular nature),
have been extensively studied in the literature and the proposed strategy in order 
to get local existence (i.e., smoothing $\gamma$, discretizing by Faedo--Galerkin,
then going back to the original $\gamma$) is very well established. Of course,
in our setting, equation \eqref{nutrns} is more complicated than \eqref{stefan}
and we also have the additional difficulties resulting from the coupling
with the CH system. Nevertheless, to reduce technical details,
we assume to have accomplished the above strategy and we just focus on what we
believe to be the main difficulty to get an existence theorem,
i.e., the passage to the limit $n\to\infty$.

For this purpose, we first have to reproduce the energy estimate by 
working on the regularized system~\eqref{CH1n}-\eqref{nutrn}.
Of course, we can use either the original nutrient variable $\sigma$
or the transformed (truncated) variable $s=T_n(\sigma)$ since the formulations
in terms of $\sigma$ and $s$ are equivalent at this level. 
Notice also that the proposed approximation is devised as to preserve the minimum
principle property; hence we can freely assume $s$ and $\sigma$
to be nonnegative.

That said, using the variable $\sigma$ so to get an estimate more similar
to that obtained in the previous section, we have to test
\eqref{CH1n} by $\mu$, \eqref{CH2n} by $\fhi_t$, and
\eqref{nutrn} by $\ln\sigma + \chi(1-\fhi)$. Then, by proceeding as before,
we arrive at the analogue of \eqref{co:13}, which takes now the form
\begin{align}\no
  & \ddt \calF_n(\fhi,\sigma)
   + m_0 \io \sigma \big| \nabla ( \ln \sigma {+ \chi (1-\fhi)} ) \big|^2 
   + m_0 \| \nabla \mu \|^2 
   + \kappa_\infty \io \beta(\fhi) \sigma^p \ln \sigma\\ 
 \label{co:13n}
  & \mbox{}~~~~~ \leq \io S(\fhi,\sigma) \mu
   + \kappa_0 \io \beta(\fhi)\sigma \ln \sigma 
   + \kappa_0 \chi \io \beta(\fhi)\sigma ( 1 -\fhi)
   + \kappa_\infty \chi \io \beta(\fhi) \sigma^p (\fhi - 1),
\end{align}
and where the approximated energy takes the expression
\begin{equation*}
  {\cal F}_n(\fhi, \sigma) = 
    \io \Big( \frac12 |\nabla \fhi |^2 + F_n(\fhi) 
   - \frac\lambda2 \fhi^2
   + L_n(\sigma)
   + \chi T_n(\sigma) (1-\fhi) \Big).
\end{equation*}
The function $L_n$ is defined as follows:
\begin{equation*}
  L_n(\sigma) 
   = \int_0^\sigma T_n'(r) \ln r \,\dir,
\end{equation*}
so that we have in particular
\begin{equation}\label{propLn1}
  L_n(\sigma) = \sigma (\ln\sigma - 1)~~\text{for }\,\sigma \le n
   \qquext{and }\,L_n(\sigma) \ge n (\ln n - 1)~~\text{for }\,\sigma >n.
\end{equation}
Let us now verify that the coupling term $\chi T_n(\sigma) (1-\fhi)$ can be controlled,
uniformly in $n$, by using the other integrands. We actually notice that 
\begin{align*}\no
   \chi \io T_n(\sigma) (1-\fhi)
  & = \chi \io T_n(\sigma) - \chi \io T_n(\sigma) \fhi\\
 \no
   & =  \chi \io T_n(\sigma)
   - \chi \int_{\{|\fhi|\le 2\}} T_n(\sigma) \fhi
    - \chi \int_{\{|\fhi|> 2\}} T_n(\sigma) \fhi\\
 \no
   & \ge 
    \chi \io T_n(\sigma)
    -2 \chi \int_{\{|\fhi|\le 2\}} T_n(\sigma)
     - (n+1) \chi \int_{\{{|\fhi| > 2}\}} \fhi\\
   & \ge - \chi \io T_n(\sigma)
     - 2 (n+1) \chi \int_{\{{|\fhi| > 2}\}} ( \fhi - 1).
\end{align*}
To control the \rhs, one can first verify that 
\begin{equation}\label{ap:14}
  \chi T_n(\sigma) \le \frac12 L_n(\sigma) + c,
\end{equation}
for every $\sigma>0$ and a suitable constant $c\ge 0$ independent of $n$.
Analogously, owing to \eqref{ap:11}, it is clear that,
for $c\ge 0$ as above, we have
\begin{equation}\label{ap:15}
   F_n(r) \ge \frac{n^3}2 (|r|-1)^2 - c \quext{for every }\,|r|\ge 1.
\end{equation}
Consequently, thanks also to Young's inequality, we have in particular
\begin{equation*}
  2 (n+1) \chi ( \fhi - 1) \le \frac14 F_n(\fhi) + c 
   \quext{for every }\,\fhi\ge 2. 
\end{equation*}
Based on the above considerations, and noting also that, by \eqref{ap:11},
\begin{equation*}
  \frac\lambda2 \fhi^2 \le \frac14 F_n(\fhi) + c,
\end{equation*}
again for $c\ge 0$ independent of $n$,
we conclude that there exists a constant 
$C$ independent of $n$ such that 
\begin{equation}\label{propenn}
  {\cal F}_n(\fhi, \sigma) 
    \ge \io \Big({\frac 12}|\nabla \fhi |^2 + \frac12 F_n(\fhi) + \frac12 L_n(\sigma) - C\Big).
\end{equation}
Namely, coercivity of the energy is preserved at the approximate level. 

Then, in order to deduce from relation \eqref{co:13n} an analogue of the 
energy estimate \eqref{en:es}, we need to {check} that we can still control the 
\rhs. To this aim, we start observing that the first integral can be managed 
similarly with \eqref{co:14}-\eqref{co:17}. We only notice that, when the 
analogue of \eqref{co:16} is performed, we can no longer use the uniform 
boundedness of $\fhi$. On the other hand, the mean property \eqref{medie} is preserved
also in the approximation. 
Then, the contribution corresponding to the last integral in \eqref{co:16}
is now managed as follows:
\begin{align*}\no
  \chi \io T_n(\sigma) (\fhi - \fhi\OO) 
   & = \chi \int_{ \{|\fhi|\le3/2\} } T_n(\sigma) ( \fhi - \fhi\OO )
   + \chi \int_{ \{|\fhi|> 3/2\} } T_n(\sigma) ( \fhi - \fhi\OO )\\
 \no
 & \le c \int_{\{|\fhi|\le3/2\}} | T_n(\sigma) |
  + c (n + 1) \int_{\{|\fhi|>3/2\}} ( |\fhi| - 1 )\\
 & \le c \io | L_n(\sigma) | 
  + c + c \io F_n(\fhi),
\end{align*} 
where we used \eqref{ap:14}, \eqref{ap:15} and the control \eqref{medie} on the spatial average
of $\fhi$, which is not affected by the approximation. 
Then we notice that the last integral
on the \rhs\ can be controlled by Gr\"onwall's lemma. In order to estimate
the remaining terms in \eqref{co:13n},
we first observe that, using {\bf (A3)} and in
particular \eqref{beta:bou3},
\begin{align*}\no
  & \kappa_\infty \chi \io \beta(\fhi) \sigma^p (\fhi - 1)
   = \kappa_\infty \chi \int_{\{|\fhi|\le2\}} \beta(\fhi) \sigma^p (\fhi - 1)\\
  & \qquad = \kappa_\infty \chi \int_{\{|\fhi|\le3/2\}} \beta(\fhi) \sigma^p (\fhi - 1)
   + \kappa_\infty \chi \int_{\{3/2<|\fhi|\le2\}} \beta(\fhi) \sigma^p (\fhi - 1)
   =: \II_1 + \II_2.
\end{align*} 
Now, using \eqref{beta:bou} with a generalized form of Young's inequality,
we have
\begin{equation}\label{ap:63}
  \II_1 \le c \int_{\{|\fhi|\le3/2\}} \sigma^p
   \le c + \frac{\kappa_\infty b_0}4 \int_{\{|\fhi|\le3/2\}} |\sigma^p \ln \sigma|.
\end{equation}
Similarly, we have
\begin{equation}\label{ap:64}
 \II_2 \le c \int_{\{3/2<|\fhi|\le2\}} \beta(\fhi) \sigma^p
   \le c + \frac{\kappa_\infty}4 \int_{\{3/2<|\fhi|\le2\}} 
   \beta(\fhi) |\sigma^p \ln \sigma|.
\end{equation}
The integrals on the \rhs s of \eqref{ap:63} and \eqref{ap:64} can be 
estimated by noting that the last term
on the \lhs\ of \eqref{co:13n} gives
\begin{equation}\label{ap:63b}
  \kappa_\infty \io \beta(\fhi) \sigma^p \ln \sigma
   \ge \kappa_\infty b_0 \int_{\{|\fhi|\le3/2\}} | \sigma^p \ln \sigma | 
    + \kappa_\infty \int_{\{3/2<|\fhi|\le 2\}} \beta(\fhi) | \sigma^p \ln \sigma | - c,
\end{equation}
as a straightforward check shows. The remaining two integrals on the \rhs\ 
of \eqref{co:13n} have a slower growth with respect to $\sigma$; hence they
can be controlled in a similar but in fact easier way. 

As a consequence, it is not difficult to obtain from \eqref{co:13n} the following
inequality:
\begin{align*}\no
  & \ddt \calF_n(\fhi,\sigma)
   + m_0 \io \sigma \big| \nabla ( \ln \sigma {+ \chi (1-\fhi)} ) \big|^2 
   +\frac{ m_0}{2} \| \nabla \mu \|^2 
   + \frac{\kappa_\infty b_0}2 \int_{\{|\fhi|\le3/2\}} | \sigma^p \ln \sigma | \\
  & \mbox{}~~~~~ 
   + \frac{\kappa_\infty}2 \int_{\{3/2<|\fhi|\le 2\}} \beta(\fhi) | \sigma^p \ln \sigma |
  \le c \| \nabla \fhi \|^2
   + c 
   + c \io F_n(\fhi)
   + c \io | L_n(\sigma) |.
\end{align*}
Estimating the last term similarly with \eqref{ap:63} we end up with
\begin{align}\no
  & \ddt \big( C_0 + \calF_n(\fhi,\sigma) \big)
   + m_0 \io \sigma \big| \nabla ( \ln \sigma {+ \chi (1-\fhi)} ) \big|^2 
   + \frac{m_0}{2} \| \nabla \mu \|^2 
   + \frac{\kappa_\infty b_0}4 \int_{\{|\fhi|\le3/2\}} | \sigma^p \ln \sigma | \\
 \label{co:13n3}
  & \mbox{}~~~~~ 
   + \frac{\kappa_\infty}2 \int_{\{3/2<|\fhi|\le 2\}} \beta(\fhi) | \sigma^p \ln \sigma |
  \le c \| \nabla \fhi \|^2
   + c 
   + c \io F_n(\fhi)
   + c \io | L_n(\sigma) |,
\end{align}
where $C_0>0$ is large enough so that $C_0 + \calF_n$ is coercive uniformly with respect 
to $n$ (cf.~\eqref{propenn}). 
Then, we can apply Gr\"onwall's lemma to the above relation
so to deduce the following bounds which are independent of the approximation parameter $n$:
\begin{align}\label{st:n11}
  & \| \fhi \|_{\LIV} \le c,\\
 \label{st:n12}
  & \| \nabla \mu \|_{\LDH} \le c,\\
 \label{st:n13}
  & \| F_n(\fhi) \|_{L^\infty(0,T;L^1(\Omega))} 
   + \| L_n(\sigma) \|_{L^\infty(0,T;L^1(\Omega))} \le c,\\
 \label{st:n13b}
  & \| \sigma^{1/2} \nabla ( \ln \sigma {+ \chi (1-\fhi)} ) \|_{\LDH} \le c.
\end{align}
As in the previous part, the uniform control 
\eqref{medie} of the spatial average of $\fhi$ permits us to 
improve \eqref{st:n12} leading to
\begin{equation}\label{st:n14}
  \| \mu \|_{\LDV} \le c.
\end{equation}
Now, from the first of \eqref{st:n13} and \eqref{ap:15} it is not difficult to deduce that
\begin{equation*}
  \big| \{ |\fhi(\cdot,t)| \ge 3/2 \} \big| \le c n^{-3}
   \quext{for a.e.~}\,t\in(0,T).
\end{equation*}
As a consequence, we have 
\begin{align*}\no
  \itt \io | \sigma^p \ln \sigma|
   & = \itt \int_{\{|\fhi(\cdot,t)|\le 3/2\}} | \sigma^p \ln \sigma|
     + \itt \int_{\{|\fhi(\cdot,t)|> 3/2\}} | \sigma^p \ln \sigma|\\
   & \le c + c n^p \ln n \big| \{|\fhi(\cdot,t)|> 3/2\} \big| 
     \le c + c n^{p-3} \ln n \le c,
\end{align*}
where we used the control \eqref{co:13n3} and the fact $p\le 2$.
Hence, we end up with 
\begin{equation}\label{st:n17}
  \| \sigma^p \ln \sigma \|_{L^1(0,T;L^1(\Omega))} \le c.
\end{equation}
A similar procedure, combined with the second bound in \eqref{st:n13},
permits us to deduce 
\begin{equation}\label{st:n18}
  \| \sigma \ln \sigma \|_{L^\infty(0,T;L^1(\Omega))} \le c.
\end{equation}
Finally, we notice that the analogues of \eqref{st:13},  \eqref{st:14}, and \eqref{st:62old}
can be obtained reasoning as in the previous section.
%


\subsection{Passage to the limit} 
\label{subsec:lim}

In this part, we assume to have a sequence $\{(\fhi_n,\mu_n,\sigma_n)\}_n$ of solutions
complying with the a-priori estimates uniformly with respect to the parameter $n$.
Such a sequence may be an outcome of the ``strategy'' (A)-(C) outlined before. 
In particular, we will assume $(\fhi_n,\mu_n,\sigma_n)$ to solve, at least locally
in time, system \eqref{CH1n}-\eqref{nutrn} 
complemented with homogeneous Neumann boundary conditions and suitable initial conditions.
Moreover, from now on, the dependence of
the approximate solution on the parameter $n$ is stressed.

Moreover, since
the estimates derived in the previous part are uniform in time, by standard 
extension arguments the solution obtained in the limit will acquire a global in
time character. For this reason, and for the sake of simplicity too, we will directly
assume to have a global solution at the approximate level.
 
That said, we observe that the approximated version of
estimates \eqref{st:13} and \eqref{st:14}, 
\eqref{st:n11}-\eqref{st:n14}, \eqref{st:n17},
with standard weak and weak star compactness results, imply that 
there exist limit functions $\fhi, \sigma, \mu,$ and $\xi$ such that, as $n\to\infty$,
\begin{alignat}{2}\label{lim:11}
  & \fhi_n \to \fhi \quad && \text{weakly-star in }\,
    \H1 \Vp \cap \L\infty V\cap L^p(0,T;W^{2,p}(\Omega)),\\
 \label{lim:12}
  & \sigma_n  \to \sigma \quad 
  && \text{weakly in } \L{p} {\Lx p},\\
 \label{lim:13}
  & \mu_n \to \mu \quad  && \text{weakly in }\, \L2 V,\\
 \label{lim:14}
  & F_{n}'(\fhi_n) \to \xi \quad  && \text{weakly in }\,\L p {\Lx p}.
\end{alignat}

The above convergence relations, as well as the ones that follow, are intended
to hold up to the extraction of non-relabelled subsequences of $n\to\infty$.
Since \eqref{Tn11}-\eqref{Tn13} imply in particular
$|T_n'(r)|\le 1$ for every $n\in\NN$ and $r\in \RR$, we also have 
\begin{equation*}
  s_n  \to s \quad \text{weakly in } \L{p} {\Lx p},
\end{equation*}
where, at this level, the functions $s$ and $\sigma$ need not be related to 
each other. 

Next, note that \eqref{lim:11}, applying the Aubin--Lions lemma, also gives
\begin{equation}\label{conv:01}
  \fhi_n \to \fhi \quad \text{strongly in }\, C^0([0,T];H^{1-\eps}(\Omega))
   ~~\text{for every }\,\eps>0.
\end{equation}

The above implies, in particular, pointwise (almost everywhere)
convergence. As we will see below, properties \eqref{lim:11}-\eqref{lim:14}
are sufficient to pass to the limit in the Cahn--Hilliard
system \eqref{CH1b}-\eqref{CH2b}. On the other hand,
as is common in Keller--Segel-type models,
the main difficulties arise when one considers the equation \eqref{nutrb} for the chemical
concentration. In particular, the key step stands in providing a suitable control
of the cross-diffusion term, which has a quadratic growth. To this aim, the choice
of a logistic source term is crucial and a suitable refined argument has to be devised. 

Before detailing the procedure to pass to the limit, we need some preparation.
In this direction, we set
$Z:=W^{1,2p/(p-1)}(\Omega)$ and we first notice that
\begin{align}\no
  & \big\| \sigma_n \mobn(\fhi_n,\sigma_n) \nabla (\ln \sigma_n + \chi (1-\fhi_n) ) 
       \big\|_{L^{2p/(p+1)}(Q)}\\
 \label{oss:11} 
  & \mbox{}~~~~~
   \le M \| \sigma_n^{1/2} \|_{L^{2p}(Q)}
      \big\| \sigma_n^{1/2} \nabla (\ln \sigma_n + \chi (1-\fhi_n) ) \big\|_{L^2(Q)}
   \le c,
\end{align}
the last inequality following from \eqref{st:n13b} and \eqref{st:n17}.

Next, we consider the second term in \eqref{nutrn} multiplied by $z\in Z$.
Integrating by parts and using the analogue on $\Omega$ of \eqref{oss:11}, 
we obtain
\begin{align}\no
  & \io \sigma_n \mobn(\fhi_n,\sigma_n)  \nabla (\ln \sigma_n + \chi (1-\fhi_n) ) \cdot \nabla z\\
 \no
  & \mbox{}~~~~~
   \le \big\| \sigma_n \mobn(\fhi_n,\sigma_n) 
      \nabla (\ln \sigma_n + \chi (1-\fhi_n) ) \big\|_{2p/(p+1)}
     \| \nabla z \|_{2p/(p-1)}\\
 \label{oss:12} 
  & \mbox{}~~~~~
   \le M \| \sigma_n \|_{p}^{1/2}
      \big\| \sigma_n^{1/2} \nabla (\ln \sigma_n + \chi (1-\fhi_n) ) \big\|
        \| z \|_{Z}.
\end{align}
Moreover, we observe that, if $p$ is as in the statement
of Theorem~\ref{THM:WEAK}, i.e., $p\in[3/2,2]$ if ${d}=2$ and $p\in[8/5,2]$ if ${d}=3$, 
then we also have $Z\subset L^\infty(\Omega)$ by Sobolev's embeddings. As a 
consequence, we have
\begin{align}
  \io \beta(\fhi_n) (\kappa_0\sigma_n - \kappa_\infty \sigma_n^p) z
   & \le c \big( 1 + \| \sigma_n \|^p_p \big) \| z \|_{\infty}
 \label{oss:13}  
   \le c \big( 1 + \| \sigma_n \|^p_p \big) \| z \|_{Z}.
\end{align}
Hence, recalling \eqref{st:n17} and \eqref{st:n13b}, it is not difficult to deduce
from \eqref{oss:12} and \eqref{oss:13} that
\begin{equation}\label{oss:14}
  \| s_{n,t} \|_{L^1(0,T;Z^*)} \le c.
\end{equation}
Now, to apply once again the Aubin--Lions lemma, we also need an estimate of
the gradient of $s_n$. To this aim, we need to decouple the information 
on the cross-diffusion term resulting from the energy estimate. In order to achieve
this goal, the constraints on the exponent $p$ in \eqref{b:logi2}
are essential. 

Indeed, we start noticing that, by the second in
\eqref{st:14} (or the corresponding
convergence \eqref{lim:11}), there holds
\begin{equation*} 
  \| \nabla\fhi_n \|_{L^p(0,T;L^{dp/(d-p)}(\Omega))} \le c,
\end{equation*}
where, in the critical case $p=d=2$, $dp/(d-p)$ is intended to be
replaced by any ${\mathfrak p}\in[1,\infty)$. 

Interpolating the above information with the $L^\infty(0,T;H)$-bound resulting from
the second in \eqref{lim:11}, it is then not difficult to obtain
\begin{equation} \label{st:33}
  \| \nabla\fhi_n \|_{L^{p(d+2)/d}(Q)} \le c,
\end{equation}
which holds also in the critical case $p=d=2$ thanks to Lady\v{z}enskaja's inequality
and the last of \eqref{lim:11}.

Combining this fact with the uniform $L^p$-bound for $\sigma_n$, we then deduce that
\begin{equation*} 
  \| \sigma_n \nabla\fhi_n \|_{L^{1}(Q)} \le c \quext{provided that }\,
   \frac1p + \frac{d}{p(d+2)}\le 1,
\end{equation*}
which corresponds exactly to the conditions $p\ge 8/5$ for $d=3$ and $p\ge 3/2$ for $d=2$ 
stated in Assumption~{\bf (A3)}. More precisely, under such conditions, we have
\begin{equation} \label{st:33b2}
  \| \sigma_n \nabla\fhi_n \|_{L^{\frac{p(d+2)}{2d+2}}(Q)} \le c. 
\end{equation}
We now compare \eqref{st:33b2} with \eqref{oss:11} and notice that, both 
for $d=2$ and for $d=3$, in the admissible range for $p$ we always 
have
\begin{equation*} 
  \frac{2p}{p+1} \ge \frac{p(d+2)}{2d+2},
\end{equation*}
whence we arrive at 
\begin{equation*} 
  \| \nabla \sigma_n \|_{L^{\frac{p(d+2)}{2d+2}}(Q)} 
   = \| \nabla \sigma_n \|_{L^{\frac{p(d+2)}{2d+2}}(0,T;L^{\frac{p(d+2)}{2d+2}}(\Omega))}
  \le c.
\end{equation*}
By $|T_n'|\le 1$ we then also have the corresponding estimate
\begin{equation} \label{st:a1b}
  \| \nabla s_n \|_{L^{\frac{p(d+2)}{2d+2}}(Q)} 
   = \| \nabla s_n \|_{L^{\frac{p(d+2)}{2d+2}}(0,T;L^{\frac{p(d+2)}{2d+2}}(\Omega))}
  \le c.
\end{equation}
Conditions \eqref{oss:14} and \eqref{st:a1b} allow us to apply the generalized
Aubin--Lions in the form \cite[Cor.~4, Sec.~8]{Si}, so obtaining, also thanks
to Sobolev's embeddings,
\begin{equation} \label{conv:21}
  s_n \to s \quad \text{strongly in }\,
   L^{\frac{p(d+2)}{2d+2}}(0,T;L^{ {\mathfrak p} }(\Omega))
   ~~\text{for every }\, \mathfrak p\in\Big[1,\frac{dp(d+2)}{2d(d+1)-p(d+2)}\Big).
\end{equation}
The complicated exponents above are not essential and in fact they can be improved.

Indeed, \eqref{conv:21} implies in particular convergence almost everywhere.
Moreover, the control of the last summand in \eqref{st:n17} 
provides the following uniform integrability estimate:
\begin{equation} \label{uni:int}
  \big\| \sigma_n \ln^{1/p} (1 + \sigma_n) \big\|_{L^p(Q)} \le c
\end{equation}
and, as before, the same bound holds for $s_n$. By Vitali's theorem \cite{Vi},
we then have
\begin{equation} \label{conv:22}
  s_n \to s \quad \text{strongly in }\, L^p(Q).
\end{equation}
We now show that, in fact, the functions $s$ and $\sigma$ do coincide. To this aim,
%
%
%
setting $\Omega_n^+=\Omega_n^+(t):=\{x\in \Omega:\sigma_n(x,t)\ge n\}$,  
thanks to \eqref{propLn1} there follows $|\Omega_n^+(t)|\le c/(n\ln n)$
for almost every $t\in(0,T)$. As a consequence,
\begin{align}\no
  \| \sigma_n - s \|_{L^1(Q)} 
   & \le \| \sigma_n - s_n \|_{L^1(Q)} + \| s_n - s \|_{L^1(Q)}\\
 \no
  & = \big\| \sigma_n - T_n(\sigma_n) \|_{L^1(Q)} + \| s_n - s \|_{L^1(Q)}\\
 \no
  & \le \int_0^T \int_{\Omega_n^+(t)} \sigma_n(\cdot,t)\,\dit 
   + \| s_n - s \|_{L^1(Q)}\\
 \no
  & \le \int_0^T \Big( |\Omega_n^+(t)|^{\frac{p-1}{p}} \| \sigma_n \|_{L^p(\Omega_n^+(t))} \Big)\,\dit
   + \| s_n - s \|_{L^1(Q)}\\
    \label{conv:71} 
  & \le \frac{c}{n^{\frac{p-1}{p}}} \| \sigma_n \|_{L^1(0,T;L^p(\Omega))}
   + \| s_n - s \|_{L^1(Q)},
\end{align}
and it is readily seen that the \rhs\ tends to zero in view of \eqref{uni:int}
and \eqref{conv:22} as $n\to\infty$. Comparing with \eqref{lim:12},
we then obtain the identification
$s\equiv \sigma$. In particular, the truncation operator $T_n$ disappears
in the limit. For this reason, we can drop the use of the letter $s$ in the 
limit and go back to the original variable $\sigma$. 
Notice also that, applying Vitali's theorem again,
we have
\begin{equation} \label{conv:22b}
  \sigma_n \to \sigma \quad \text{strongly in }\, L^p(Q).
\end{equation}
We are now ready to take the limit $n\to\infty$ in the Cahn--Hilliard system. To this aim, we first
notice that, by the facts that $\fhi_n\to\fhi$ and $\sigma_n\to\sigma$ almost
everywhere, combined with the boundedness and Lipschitz continuity of 
$h,\mobm,\mobn,$ and $\beta$, we may deduce, as $n\to\infty$,
\begin{equation} \label{conv:23}
  h(\fhi_n,\sigma_n),\mobm(\fhi_n,\sigma_n),\mobn(\fhi_n,\sigma_n),\beta(\fhi_n)
   \to h(\fhi,\sigma),\mobm(\fhi,\sigma),\mobn(\fhi,\sigma),\beta(\fhi)
   \quad \text{strongly in }\, L^{\mathfrak{p}}(Q),
\end{equation}
for every $\mathfrak{p}\in [1,\infty)$, thanks also to a generalized version
of Lebesgue's dominated convergence theorem.
Combining this with \eqref{lim:13} we have, by virtue of the 
weak-strong convergence principle,
\begin{equation} \label{conv:24}
  \mobm(\fhi_n,\sigma_n)\nabla \mu_n
   \to \mobm(\fhi,\sigma)\nabla \mu
   \quad \text{weakly in }\, L^2(0,T;H).
\end{equation}
Hence, in view of the above relations, testing \eqref{CH1n} by 
a generic test function $v\in V$ and integrating by parts, 
it is apparent that all terms pass to $n\to \infty$
so to obtain \eqref{wf:phi} in the limit. 

Concerning \eqref{wf:mu}, this is obtained by testing \eqref{CH2n}
by $v\in V$ and then letting $n\to \infty$. To check that 
this procedure work we just need to take care of the nonlinear term depending
on the configuration potential. In other words, going back to the weak convergence in \eqref{lim:14},
we need to identify the limit function $\xi$.
To this aim, we first notice that from \eqref{conv:01}
and Sobolev's embeddings there follows in particular
\begin{equation} \label{conv:25-0}
  \fhi_n \to \fhi
   \quad \text{strongly in }\, L^{{\mathfrak q}}(0,T;L^{{\mathfrak q}}(\Omega)),
\end{equation}
for any ${\mathfrak q}\in[1,6)$. Hence, under our assumptions on $p$, we have in particular
\begin{equation} \label{conv:25}
  \fhi_n \to \fhi
   \quad \text{strongly in }\, L^{{p'}}(0,T;L^{{p'}}(\Omega)),
\end{equation}
where $p'$ is the conjugate exponent to $p$.

Actually, the strong convergence \eqref{conv:25} combined with the weak convergence
\eqref{lim:14} guarantee the identification $\xi = F_1'(\fhi)$ by means of a suitable
version of the standard strong-weak compactness argument for maximal monotone operators.
Indeed, we recall that the assumed Mosco-convergence $F_{n} \to F_1$ implies
a convergence property on the maximal monotone operators induced by the 
derivatives $F_n'$. Referring once more to \cite[Chap.~3]{At} for the background,
what holds is the {\sl graph convergence}
\begin{equation*} 
  F_n' \to F_1' \quext{in }\,L^{p'}(Q) \times L^p(Q).
\end{equation*}
This corresponds to saying that, for every couple 
$[w,\eta]\in L^{p'}(Q) \times L^p(Q)$ such that $\eta=F_1'(w)$ a.e.~in~$Q$
there exists a sequence $\{[w_n,\eta_n]\}\subset L^{p'}(Q) \times L^p(Q)$, with 
$\eta_n=F_n'(w_n)$ a.e.~in~$Q$ and such that 
\begin{equation*} 
  [w_n,\eta_n]\to [w,\eta]
   \quext{strongly in }\, L^{p'}(Q) \times L^p(Q).
\end{equation*}
Thanks to this property, an appropriate version of the usual monotonicity 
argument in (reflexive) Banach spaces (cf., e.g., \cite{Ba}) permits us to 
achieve that, as $n\to\infty$,
\begin{equation*}
  F_n'(\fhi_n) \to F_1'(\fhi) \quext{weakly in }\, L^p(Q).
\end{equation*}
Hence, we can pass to the limit $n\to \infty$ in \eqref{CH2n}
so to obtain \eqref{wf:mu}. 

Finally, we need to take the limit in the equation for $\sigma$, 
which is a bit trickier. First of all, we go back to \eqref{nutrn}, test it by
$w\in \Wn$ and integrate by parts. Using the condition
$\dn w = 0$ on the boundary and \eqref{nablaN:identity}, we then get
\begin{align*} \no
  \io \mobn(\fhi_n,\sigma_n) \nabla \sigma_n \cdot \nabla w
    & = \io ( \nabla N(\fhi_n,\sigma_n) - \mobn_1(\fhi_n,\sigma_n) \nabla \fhi_n)
     \cdot \nabla w\\ 
   & = - \io N(\fhi_n,\sigma_n) \Delta w - \io \mobn_1(\fhi_n,\sigma_n) \nabla \fhi_n \cdot \nabla w,
\end{align*}
which leads us to the $n$-analogue of \eqref{wf:sigma}, namely
\begin{align} \no
  & \duav{s_{n,t}, w}_{{\Wn}} 
    - \int_\Omega N(\sigma_n,\fhi_n) \Delta w 
	- \int_\Omega \mobn_1(\fhi_n,\sigma_n)\nabla \fhi_n \cdot \nabla w 
	- \chi \int_\Omega \sigma_n \mobn(\fhi_n,\sigma_n) 
    \nabla \fhi_n\cdot \nabla w\\
 \label{sigma:wfn}
  & \mbox{}~~~~~
  = \int_\Omega \beta(\fhi_n) (\kappa_0 \sigma_n - \kappa_\infty \sigma_n^p) w. 
\end{align}
To take the limit in this relation, we first observe that,
by \eqref{conv:22b} and \eqref{conv:23},
\begin{equation} \label{conv:23b}
  \beta(\fhi_n)(\kappa_0 \sigma_n - \kappa_\infty \sigma_n^p)
   \to \beta(\fhi)(\kappa_0 \sigma - \kappa_\infty \sigma^p)
   \quad \text{weakly in }\, L^{1}(Q).
\end{equation}
Next, by \eqref{conv:22b}, \eqref{conv:25} and \eqref{Lip:N}, it turns out that
$N(\fhi_n,\sigma_n) \to N(\fhi,\sigma)$ almost everywhere. As a consequence of the 
generalized Lebesgue theorem we then deduce 
\begin{equation} \label{conv:31}
  N(\fhi_n,\sigma_n) \to N(\fhi,\sigma)
    \quext{strongly in }\,L^p(Q).
\end{equation}
Analogously, recalling \eqref{cresc:n1}, we infer
\begin{equation} \label{conv:31b}
  \mobn_1(\fhi_n,\sigma_n) \to \mobn_1(\fhi,\sigma)
    \quext{strongly in }\,L^{p}(Q).
\end{equation}
To deal with the cross-diffusion terms, for clarity we just consider the worst case,
corresponding, as said, to $d=3$ and $p=8/5$. In that case, the exponent of the space
in \eqref{st:33} reduces to $8/3$. 
Then, using \eqref{conv:23} with the uniform boundedness of $\mobn$ and \eqref{st:33},
we deduce
\begin{equation*} 
  \mobn(\fhi_n,\sigma_n) \nabla \fhi_n \to  \mobn(\fhi,\sigma) \nabla \fhi 
   \quext{weakly in }\,L^{8/3}(Q),       
\end{equation*}
whence, by virtue of \eqref{conv:22b}, as $n\to\infty$,
\begin{equation} \label{conv:33b}
  \sigma_n \mobn(\fhi_n,\sigma_n) \nabla \fhi_n \to \sigma \mobn(\fhi,\sigma) \nabla \fhi 
    \quext{weakly in }\,L^{1}(Q).
\end{equation}
Analogously, owing to \eqref{conv:31b}, we get, as $n\to\infty$,
\begin{equation} \label{conv:33c}
  \mobn_1(\fhi_n,\sigma_n) \nabla \fhi_n \to \mobn_1(\fhi,\sigma) \nabla \fhi 
    \quext{weakly in }\,L^{1}(Q).
\end{equation}
The above relations serve as a starting point to pass to the 
limit in \eqref{sigma:wfn}. Indeed, the diffusion terms are managed
by means of \eqref{conv:31} and \eqref{conv:33b}, whereas the \rhs\ goes to the desired
limit thanks to \eqref{conv:23b}. On the other hand, the best estimate we 
have on $s_{n,t}$ is given by \eqref{oss:14}. Hence, in order to take the limit
$n\to \infty$, we have to consider, as specified in the statement, 
$w\in C^1([0,T];\Wn)$ and integrate \eqref{sigma:wfn} with respect to
time between $0$ and $t\le T$ integrating by parts the first term.
In this way, the time derivative of $s_n$ disappears; nevertheless,
\eqref{oss:14} still 
does not suffice to take the desired limit, unless one uses a generalized
tool like Helly's selection principle. In particular, the limit function
$s$ is expected to be only $BV$ with respect to time, which would allow it
to have jumps with respect to the time variable. 

In order to exclude this fact, we need to refine a bit
the information on $s_{n,t}$ by exploiting in a suitable
way the uniform integrability property \eqref{uni:int}. This procedure will
allow us to recover also the initial datum in the sense \eqref{ini:2}. 
To this aim, we go back to \eqref{nutrn} and test it by $w\in\Wn$. Using, 
in particular, the fact $\Wn\subset W^{1,\infty}(\Omega)$, 
it is then not difficult to obtain
\begin{align} \no 
  \| s_{n,t}\|_{{{\cal W}^*_{\bn}}}
   & \le c \| \sigma_n^{1/2} \|
    \big\| \sigma_n^{1/2} \nabla (\ln \sigma_n + \chi (1-\fhi_n) ) \big\|
    + c \big( 1 + \| \sigma_n \|^p_p \big)\\
  \label{uni:11a}          
    & \le c \| \sigma_n^{1/2} \| \big\| \sigma_n^{1/2} \nabla (\ln \sigma_n + \chi (1-\fhi_n) ) \big\|
    + c + c \| \sigma_n \|^p_p 
    =: M_{1,n} + c + M_{2,n}.
\end{align}
Now, it is clear that $t\mapsto \| \sigma_n^{1/2}(t) \|$ is bounded in $L^{2p}(0,T)$ as a consequence
of \eqref{conv:22b} and that 
$t\mapsto \big\| \sigma_n^{1/2}(t) \nabla (\ln \sigma_n(t) + \chi (1-\fhi_n(t)) ) \big\|$
is bounded in $L^{2}(0,T)$ as a consequence of \eqref{st:n13b}. Combining these facts,
we readily obtain that
\begin{equation} \label{conv:33x}
  \| M_{1,n} \|_{L^{2p/(p+1)}(0,T)} \le c,
\end{equation}
with $c$ independent of $n$. Let us now set,
for $r>0$, $\Phi(r):=r \ln(e+r)$ and let us notice that $\Phi$ is 
convex and increasing.
Then, applying $\Phi$ to inequality \eqref{uni:11a} and integrating in time,
it is not difficult to check that 
\begin{align} \no
  \int_0^T \Phi\big( \| s_{n,t}\|_{{{\cal W}^*_{\bn}}} \big)
   & \le \int_0^T \Phi(M_{1,n} + c + M_{2,n})\\
 \no
   & \le c + c \int_0^T \Phi(M_{1,n}) + c \int_0^T \Phi(M_{2,n})\\
 \no
   & \le c + c \int_0^T \Phi\big(c \| \sigma_n \|^p_{p} \big)\\
 \label{uni:14}  
  & \le c + c \io \sigma_n^p \ln (e + \sigma_n) \le C,
\end{align}
where $C>0$ is a computable constant depending only on the known data 
of the problem. Notice also that, to estimate the first integral on the second row,
we used \eqref{conv:33x} with the fact $2p/(p+1)>1$.

Next, let $0\le \tau < t \le T$. Then, we have
\begin{equation*} 
 \frac{\| s_n(t) - s_n(\tau) \|_{{{\cal W}^*_{\bn}}}}{|t-\tau|}
   \le \int_\tau^t \frac1{|t-\tau|} \| s_{n,t}(r) \|_{{{\cal W}^*_{\bn}}} \,\dir.
\end{equation*}
Using that $\Phi$ is nondecreasing and convex, and applying
Jensen's inequality, we then deduce
\begin{align*} \no
  \Phi\bigg( \frac{\| s_n(t) - s_n(\tau) \|_{{{\cal W}^*_{\bn}}}}{|t-\tau|} \bigg)
   & \le \Phi\bigg( \int_\tau^t \frac1{|t-\tau|} \| s_{n,t}(r) \|_{{{\cal W}^*_{\bn}}} \,\dir \bigg)\\
 \no  
  & \le  \int_\tau^t \frac1{|t-\tau|} \Phi\big( \| s_{n,t}(r) \|_{{{\cal W}^*_{\bn}}} \big) \,\dir\\
  & \le \frac1{|t-\tau|} \int_0^T \Phi\big( \| s_{n,t}(r) \|_{{{\cal W}^*_{\bn}}} \big) \,\dir
    \le \frac C{|t-\tau|},
\end{align*}
where $C>0$ is the constant introduced in \eqref{uni:14}.

Then, using again the strict monotonicity of $\Phi$, we deduce 
\begin{equation*} 
  \frac{\| s_n(t) - s_n(\tau) \|_{{{\cal W}^*_{\bn}}}}{|t-\tau|} 
   \le \Phi^{-1} \Big(\frac{C}{|t-\tau|} \Big),
\end{equation*}
or, in other words,
\begin{equation*} 
  \| s_n(t) - s_n(\tau) \|_{{{\cal W}^*_{\bn}}}
   \le | t - \tau |\Phi^{-1}\Big(\frac C{|t-\tau|}\Big).
\end{equation*}
Then, noting that $\Phi^{-1}$ is strictly sublinear at infinity, as a direct check
shows, we may observe that there holds the following equicontinuity property:
for every $\epsi>0$ there exists $\delta>0$ such that for every $n\in \NN$ 
and every $0\le \tau < t \le T$ with $|t-\tau|<\delta$ there holds
\begin{equation*} 
  \| s_n(t) - s_n(\tau) \|_{{{\cal W}^*_{\bn}}} < \epsi.
\end{equation*}
Now, using \eqref{st:n18} with \eqref{Tn11}-\eqref{Tn13},
%
%
%
%
it is easy to deduce
\begin{equation*} 
  \| s_n \|_{L^\infty(0,T;L^1(\Omega))} \le c.
\end{equation*}
Hence, observing that $L^1(\Omega) \subset {\cal W}^*_{\bn}$ with compact embedding,
if we take as $\calZ$ a generic (reflexive) Banach space (which, of 
course, will have a negative order as a Sobolev space) such that 
\begin{equation*}
  L^1(\Omega) \subset \subset \calZ \subset {{\cal W}^*_{\bn}},
\end{equation*}
using some interpolation it is not difficult to check that Ascoli's theorem
can be applied to the sequence $\{s_n\}$ in the space 
$C^0([0,T];\calZ)$ so to obtain
\begin{equation} \label{uni:27}
  s_n \to s = \sigma \quext{strongly in }\,C^0([0,T];\calZ)
\end{equation}
and, a fortiori, in $C^0([0,T];{{\cal W}^*_{\bn}})$.
In particular, since $s_n|_{t=0} = T_n(\sigma_0)$ and $T_n(\sigma_0)$
tends to $\sigma_0$ in $L^1(\Omega)$ thanks to Lebesgue's dominated
convergence theorem, we obtain that the initial condition $\sigma|_{t=0} = \sigma_0$
is satisfied in a standard sense,
which excludes the occurrence of jumps of $\sigma$ with respect
to the time variable. Moreover, \eqref{uni:27} allows us to pass
to the limit in the time-integrated version of \eqref{sigma:wfn} so to
obtain \eqref{wf:sigma} (which, we remark, also incorporates the boundary
conditions). 
We incidentally notice that \eqref{reg:2:dtsigma} also follows
from the above procedure. In particular, the second regularity 
in \eqref{reg:2} is a consequence
of \eqref{st:n18} and an equiintegrability argument. 
Finally, \eqref{rego:add} follows from the analogue of \eqref{st:62old}.
This concludes the proof of Theorem~\ref{THM:WEAK}.


\section{Proof of the regularity results}
\label{sec:rego}

The proofs of the regularity results are mainly based on the derivation
of higher-order additional sets of a-priori estimates. It is worth observing from the very
beginning that these estimates will be derived in a formal way by working
on the ``original'' system~\SYS. We believe that, at this regularity
level, obtaining the estimates in a fully rigorous way would require a very
lengthy and technical adaptation of the approximation scheme. Since such a 
procedure would, however, present a very limited mathematical interest,
we prefer to proceed formally.

\smallskip

\begin{proof}[Proof of Theorem \ref{THM:WEAK:2d3d}]
We distinguish between the 2D and the 3D cases, which have to be managed
by different methods.

\smallskip
\noindent%
{\bf Two dimensional case.}~~%
For convenience, let us start with the two dimensional case 
recalling that now $p=2$. In that setting, 
we test \eqref{nutrb} by $\sigma$ to obtain
\begin{equation} \label{co:52}
  \frac12 \ddt \| \sigma \|^2
   + m_0 \| \nabla \sigma \|^2
   + \kappa_\infty b_0\norma{\sigma}^3_3
\le c \| \sigma \|^2
   + \chi \io \sigma \mobn(\fhi,\sigma) \nabla \fhi \cdot \nabla \sigma.
\end{equation}
Then, we test \eqref{CH2b} by $-\Delta \fhi$ to infer that
\begin{equation} \label{co:53}
  \io F_1''(\fhi)|\nabla \fhi|^2 
   + \| \Delta \fhi \|^2
  \le - \chi\io \sigma \Delta \fhi
   + \lambda \| \nabla \fhi \|^2
   + \| \nabla \fhi \| \| \nabla \mu \|,
\end{equation}
whence, squaring and using the previous estimates
with the monotonicity of $F_1'$, standard manipulations lead us to 
\begin{equation} \label{co:54}
  \| \Delta \fhi \|^4
   \le c \big(1 + \| \sigma \|^4 +  \| \nabla \mu \|^2 \big).
\end{equation}
Then, to control the last term on the \rhs\ of \eqref{co:52}, 
we observe that, by H\"older's and standard
interpolation inequalities holding for $d=2$, 
\begin{align}\no
  \chi\io \mobn(\fhi,\sigma) \sigma \nabla \fhi\cdot \nabla  \sigma
   & \le c \| \sigma \|_4
   \| \nabla \fhi \|_4
   \| \nabla \sigma \|\\
 \no
   & \le c \| \sigma \|^{1/2} \| \sigma \|_V^{1/2} 
   \| \fhi \|_V^{1/2} \| \fhi \|_{H^2(\Omega)}^{1/2}
      \| \nabla \sigma \|\\
 \no
   & \le c \| \sigma \|^{1/2} \big( \| \sigma \|^{1/2} + \| \nabla \sigma \|^{1/2} \big) 
   \big( 1 + \| \Delta \fhi \|^{1/2} \big)
      \| \nabla \sigma \|\\
 \label{co:55}
   & \le \frac{m_0}2 \| \nabla \sigma \|^2
    + \frac12 \| \Delta \fhi \|^4
    + c ( \| \sigma \|^4 +1),
\end{align}
where we used in particular that $t \mapsto\norma{\fhi(t)}_V$ is $L^\infty (0,T)$ and Young's inequality.
Summing \eqref{co:52} with \eqref{co:54} 
and using \eqref{co:55}, we then arrive at 
\begin{align*} 
  & \frac12 \ddt \| \sigma \|^2
   + \frac{m_0}2 \| \nabla \sigma \|^2
   + \kappa_\infty b_0\norma{\sigma}^3_3
   + \frac12 \| \Delta \fhi \|^4
   \le c \big (1 + \| \sigma \|^4 + \| \nabla \mu \|^2 \big). 
\end{align*}
Next, recalling that \eqref{reg:2} holds with $p=2$,
an application of Gr\"onwall's lemma, along with elliptic regularity results,
yields the additional regularity bounds
\begin{align} \label{st:71}
  & \| \fhi \|_{L^4(0,T;H^2(\Omega))} \le c,\\
 \label{st:72}
  & \| \sigma\|_{L^\infty(0,T;H) \cap L^2(0,T;V)} \le c,
\end{align}
where \eqref{ass:initialsigma:2d} has also been used.

Using \eqref{st:71}-\eqref{st:72} and comparing terms in \eqref{nutrb}, 
it is then a standard matter to derive that
\begin{align*}
  \| \sigma_t\|_{\L2 \Vp} \le c.
\end{align*}
This permits us to write the nutrient equation in the standard form
\eqref{wf:3} rather than in the integrated form \eqref{wf:sigma}; moreover, 
by classical results for second-order parabolic equations, this also gives
the continuity property in \eqref{reg:2d:2}.
By the above relations we also recover
the usual regularity scenario for the 
Cahn--Hilliard equation with singular potential under the ``energy regularity'' of 
the initial data in two space dimensions, i.e.,
\begin{equation*}
  \fhi \in H^1(0,T;\Vp) \cap L^\infty(0,T;V) \cap L^4(0,T;H^2(\Omega))
   \cap L^2(0,T;W^{2,q}(\Omega)),
\end{equation*}
for any $q\in[1,\infty)$, where the latter regularity property is obtained by 
considering once more \eqref{CH2b} as an elliptic equation with maximal monotone 
nonlinearity, namely
\begin{equation} \label{CH2:ell}
   -\Delta \fhi + F_1'(\fhi) = \lambda\fhi + \chi\sigma + \mu,
\end{equation}
and noting that the \rhs\ lies (or, in a suitable approximation, is uniformly bounded in)
$L^2(0,T;L^q(\Omega))$ thanks to continuity of the two-dimensional
embedding $V \hookrightarrow L^q(\Omega)$ for any $q\in[1,\infty)$.

\medskip
\noindent
{\bf Three dimensional case.}~~%
We now move to the three dimensional case. As said before, we proceed formally and,
to begin, we provide an auxiliary estimate which will play an important role in the sequel.
To this aim, we set $\gamma(\fhi) := - ({F'_1})_-^5(\fhi)$ (where $(\cdot)_-$ denotes the negative
part of a quantity). Then, noting that $\gamma$ is monotone and nonpositive, 
we test \eqref{CH2b} by $\gamma(\fhi)$ to obtain
\begin{equation}\label{co:seg:11}
  \io {F'_1}(\fhi) \gamma(\fhi) 
   + \io \gamma'(\fhi) |\nabla \fhi|^2 
   = \lambda \io \fhi \gamma(\fhi)
   + \chi \io \sigma \gamma(\fhi)
   + \io \mu \gamma(\fhi).
\end{equation}
Now, as $\gamma(\fhi) = -  {(F'_1)}_-^5(\fhi)=F_1'(\fhi)^5 \chi_{\{\fhi < 0\}}$
(recall the normalization property $F_1'(0)=0$, cf.~{\bf (A1)}), 
it is clear that 
\begin{equation}\label{co:seg:12}
  \io {F'_1}(\fhi) \gamma(\fhi) 
   = \| {(F'_1)}_-(\fhi) \|_6^6.
\end{equation}
Moreover, the second term on the \lhs\ of \eqref{co:seg:11}
is clearly nonnegative, while the second term on the 
\rhs\ is nonpositive due to \eqref{reg:2:nonneg}. By H\"older's 
and Young's inequalities we also have
\begin{align}\no
  \lambda \io \fhi \gamma(\fhi)
   + \io \mu \gamma(\fhi)
   & \le c \big( \| \fhi \|_6 + \| \mu \|_{6} \big) 
      \| \gamma(\fhi) \|_{{6/5}} 
   \le c \big( 1 + \| \mu \|_{6} \big)\| {(F'_1)}_-(\fhi) \|_{6}^5 \\
 \label{co:seg:13}
  & \mbox{}~~~~~
    \le c \big( 1 + \| \mu \|_{6}^6 \big)
    + \frac12 \| {(F'_1)}_-(\fhi) \|_{6}^6.
\end{align}
Hence, replacing \eqref{co:seg:12} and \eqref{co:seg:13} into \eqref{co:seg:11},
it is not difficult to deduce
\begin{equation*}
 \frac12 \| {(F'_1)}_-(\fhi) \|_{{6}}^6
    \le c \big( 1 + \| \mu \|_{{6}}^6 \big).
\end{equation*}
Taking the cubic root, using Sobolev's embeddings and recalling \eqref{reg:4} this implies 
\begin{equation}\label{re:14}
   {(F'_1)}_-(\fhi) \in L^2(0,T;L^6(\Omega)).
\end{equation}

\smallskip

\noindent%
Next, recalling assumptions \eqref{smallness}-\eqref{n:cost}, 
we may test \eqref{nutrb} (with $n\equiv 1$ and $p=2$) 
by $\sigma$ to obtain
\begin{equation}\label{co:seg:15}
 \frac12 \ddt \| \sigma \|^2
   + \| \nabla \sigma \|^2
   + \kappa_\infty b_0 \| \sigma \|_{3}^3
   \le c \| \sigma \|^2
  + \chi \io \sigma \nabla \fhi\cdot  \nabla \sigma
\end{equation}
and we need to properly manipulate the last term on the \rhs. 
To this aim, we integrate by parts and exploit the no-flux conditions with 
relation \eqref{CH2} to deduce
\begin{align*}\no
  \chi \io \sigma \nabla \fhi\cdot  \nabla \sigma
   & = \frac\chi2 \io \nabla \fhi \cdot \nabla (\sigma^2)
    = - \frac\chi2 \io \Delta \fhi \, \sigma^2 \\
 \no
  & = \frac\chi2 \io \mu\sigma^2 
   - \frac\chi2 \io {F'_1}(\fhi) \sigma^2
   + \frac{\lambda\chi}2 \io \fhi \sigma^2
   + \frac{\chi^2}2 \| \sigma \|_{3}^3\\
  & =: \II_1 + \II_2 + \II_3 + \II_4.
\end{align*}
We now provide a bound of the various terms on the \rhs. First of all, for every $\d>0$,
\begin{equation*}
  \II_1 \le \| \mu \|_{{6}} \| \sigma \|_{{3}} \| \sigma \|
   \le \d \| \sigma \|_{{3}}^2 + c_\d \| \mu \|_{V}^2 \| \sigma \|^2.
\end{equation*}
The second term is the key one. Using again that
$F_1'(\fhi)$ has the same sign as $\fhi$, we have
\begin{align*}\no
  \II_2 & = - \frac\chi2 \int_{\{\fhi{\geq}0\}} {F'_1}(\fhi) \sigma^2
   - \frac\chi2 \int_{\{\fhi<0\}} {F'_1}(\fhi) \sigma^2
   \le - \frac\chi2 \int_{\{\fhi<0\}} {F'_1}(\fhi) \sigma^2 \\
 \no
   & = \frac\chi2 \io |{(F'_1)}_-(\fhi)| \sigma^2 
   \le \frac\chi2 \| {(F'_1)}_-(\fhi) \|_{6} \| \sigma \|_{3} \| \sigma \|\\
   & \le \d \| \sigma \|_{3}^2 
         + c_\d \| {(F'_1)}_-(\fhi) \|_{6}^2 \| \sigma \|^2.
\end{align*}
The control of $\II_3$ is immediate, while $\II_4$ has to be moved
to the \lhs. Collecting the above considerations, then \eqref{co:seg:15} gives, 
for every ``small'' $\d >0$ and correspondingly ``large'' $c_\delta>0$,
\begin{align*}\no
 & \frac12 \ddt \| \sigma \|^2
   + \| \nabla \sigma \|^2
   + \Big( \kappa_\infty b_0 - \frac{\chi^2}2 \Big) \| \sigma \|_{3}^3
   \le c_\delta \Big( 1 +  \| {(F'_1)}_-(\fhi) \|_{6}^2 + \| \mu \|_{V}^2 \Big) \| \sigma \|^2
    + 3\d \| \sigma \|_{3}^2.
\end{align*}
Hence, under the compatibility condition \eqref{smallness},
recalling \eqref{reg:4} and the preliminary bound \eqref{re:14} we can adjust $\d \in (0,1)$ 
and apply Gr\"onwall's lemma to deduce
\begin{align*}
  \sigma \in \LIH \cap \LDV \cap \L3{\Lx3}.
\end{align*}
Finally, with this property at hand, the regularity of $\fhi$ can be bootstrapped 
easily by arguing as done above for the two dimensional case.
\end{proof}
\beos\label{rem:mobn0}
 It is not difficult to check that, in the three-dimensional case, the ``smallness''
 condition \eqref{smallness} might be avoided if one takes a superquadratic
 logistic term on the \rhs\ of \eqref{nutr}, i.e., 
 $\beta(\fhi)(\kappa_0\sigma - \kappa_\infty\sigma^{2+\rho})$, where $\rho>0$
 may be arbitrarily small.
\eddos
\beos\label{rem:mobn}
In the three dimensional case, even when the mobility $\mobn$ is not taken as a constant function,
something could still be said. Indeed, \eqref{co:seg:15} would then be replaced by
\begin{equation*}
 \frac12 \ddt \| \sigma \|^2
   + m_0 \| \nabla \sigma \|^2
   + \kappa_\infty b_0 \| \sigma \|_{3}^3
   \le c \| \sigma \|^2
  + \chi \io \mobn(\fhi,\sigma) \sigma \nabla \sigma \cdot \nabla \fhi
\end{equation*}
and the last term could be integrated by parts as follows:
\begin{equation}\label{n2:11}
  \chi \io \mobn(\fhi,\sigma) \sigma \nabla \sigma \cdot \nabla \fhi
    = - \chi \io N_2(\fhi,\sigma) \Delta \fhi 
     - \chi \io \mobn_2(\fhi,\sigma) | \nabla \fhi |^2,
\end{equation}
where we have set    
\begin{equation*}
  N_2(\fhi,\sigma) := 
    \int_0^\sigma \mobn(\fhi,s)s\,\dis, \qquad
   \mobn_2(\fhi,\sigma) := \int_0^\sigma 
    \partial_\fhi \mobn(\fhi,s) s \,\dis.
\end{equation*}
Then, the procedure performed above can be adapted at least 
when either $\mobn$ depends only on $\sigma$ (so that the last integral
in \eqref{n2:11} disappears) or $\mobn$ satisfies proper structure assumptions
ensuring that the last integral in \eqref{n2:11} is nonnegative
(so that it can be moved to the \lhs\ and does not need to be controlled).
On the other hand, in the general case (i.e., for $\mobn$ depending both on $\fhi$ and 
$\sigma$ with no sign conditions), the last integrand in \eqref{n2:11}
behaves like $\sigma^2|\nabla \fhi|^2$, which appears to have a supercritical
behavior in space dimension $d=3$, as the interested reader can verify.
\eddos
\noindent%
\begin{proof}[Proof of Theorem~\ref{THM:REG:1}]
Again, we derive additional a-priori estimates in a formal way and 
without referring to the proposed approximation.
We also recall that, at this level, weak solutions are already known to enjoy the 
regularity in \eqref{reg:1}-\eqref{reg:5} and \eqref{reg:2d:1}-\eqref{reg:2d:2}.
That said, we formally differentiate equation \eqref{CH2b} with 
respect to time obtaining the identity
\begin{equation}\label{differ}
  \mu_t = - \Delta \fhi_t + F_1''(\fhi)\fhi_t - \lambda \fhi_t - \chi\sigma_t.
\end{equation}
Next, we multiply \eqref{CH1b} (where $\mobm\equiv 1$)
by $\mu_t$, the above expression \eqref{differ} 
by $\fhi_t$, and add to both sides the term $\norma{\fhi_t}^2$.
Then, summing the resulting equalities together and integrating by parts we infer
\begin{align*}\notag
& \frac 12 \ddt \norma{\nabla \mu}^2 
  + \norma{\fhi_t}^2_V	+ \io F''_1(\fhi)|\fhi_t|^2\\ 
 & \quad  \notag
  = \io S(\fhi,\sigma) \mu_t
   + (1+\lambda) \norma{\fhi_t}^2
   + \chi \io \sigma_t \fhi_t\\
 & \quad  
  =  \ddt \io S(\fhi,\sigma) \mu
  - \io \partial_t (S(\fhi,\sigma)) \mu
  + (1+\lambda) \norma{\fhi_t}^2
  + \chi \io \sigma_t \fhi_t.
\end{align*}
Note that here we assumed, just for simplicity, that $F_1$ is twice differentiable,
which is unnecessarily true under our assumption {\bf (A1)}. However,
it is easy to see that, using standard convex analysis tools,
the argument might be adapted to work for nonsmooth, but convex, 
$F_1$ (as in our case). That said, we add to the above relation the result 
of \eqref{nutrb} (where $\mobn\equiv1$)
tested by $\sigma_t$. Using the specific expression of the source 
term in \eqref{b:logi2} with $p=2$, after some rearrangements we obtain 
\begin{align}\no
  & \ddt \Big( \frac 12\norma{\nabla \mu}^2 
   - \io S(\fhi,\sigma) \mu 
   + \frac 12\norma{\nabla \sigma}^2 \Big)
   + \norma{\fhi_t}^2_V
   + \norma{\sigma_t}^2\\
  \no
 & \quad 
  \leq (1+\lambda) \norma{\fhi_t}^2
  - \io \partial_t (S(\fhi,\sigma)) \mu
  + \chi \io \sigma_t \fhi_t
  - \chi\io \big( \nabla \sigma \cdot \nabla \fhi + \sigma \Delta \fhi\big) \sigma_t \\
 \label{key:11}
  & \quad \qquad
   + \io  \beta(\fhi) ( \kappa_0 \sigma -  \kappa_\infty \sigma^2) \sigma_t,
\end{align}
and we need to control the terms on the \rhs. First of all, 
by elementary interpolation, it is clear that
\begin{equation}\label{r2:10} 
  (1+\lambda)\norma{\fhi_t}^2
   \leq \frac 18 \norma{\fhi_t}_{V}^2
   + c\norma{ \fhi_t}_*^2.
\end{equation}
Next, by the global Lipschitz continuity of $S$ (cf.~{\bf (A2)})
and a well-known chain rule formula for Lipschitz functions in Sobolev 
spaces, we obtain
\begin{equation*}
  - \io \partial_t (S(\fhi,\sigma)) \mu
   \le c \io (| \fhi_t | + |\sigma_t|) | \mu |
   \le \frac18 \| \fhi_t \|^2 + \frac18 \| \sigma_t \|^2 + c \| \mu \|^2.
\end{equation*}
Analogously, owing to interpolation once again, we deduce
\begin{equation*}
   \chi \io \sigma_t \fhi_t
     =  \chi\duav{\sigma_t,\fhi_t}_{V} 
      \leq \frac18 \norma{\fhi_t}_{V}^2+ c \norma{  \sigma_t}_*^2.
\end{equation*}
The estimation of the remaining terms is just a bit more involved. Firstly
we notice that, by standard Sobolev's embeddings holding both in the two- 
and in the three-dimensional case, we have
\begin{align*} \no
  - \chi\io \big( \nabla \sigma \cdot \nabla \fhi + \sigma \Delta \fhi\big) \sigma_t
   & \leq c \big( \norma{\nabla \sigma} \norma{\nabla \fhi}_{{\infty}} \norma{\sigma_t} 
      + \norma{\sigma}_{{4}} \norma{ \Delta \fhi}_{{4}} \norma{ \sigma_t} \big)\\ 
  & \leq \frac 18 \norma{\sigma_t}^2
   + c \norma{\sigma}^2_V\norma{\fhi}^2_{W^{2,4}(\Omega)}.
\end{align*}
Next, using \eqref{beta:bou}, we infer
\begin{align} \no
  \io  \beta(\fhi) (  \kappa_0 \sigma -  \kappa_\infty \sigma^2) \sigma_t
   & \leq B \big(  \kappa_0 \| \sigma \| +  \kappa_\infty \| \sigma \|_{4}^2 \big) \norma{\sigma_t} \\ 
 \label{r2:14}
  & \leq \frac 18 \norma{\sigma_t}^2
   + c \big( 1 + \norma{\sigma}^4_V \big).
\end{align}
Replacing the outcome of \eqref{r2:10}-\eqref{r2:14} into \eqref{key:11},
we then arrive at
\begin{align}\no
  & \ddt \Big( \frac 12\norma{\nabla \mu}^2 
   - \io S(\fhi,\sigma) \mu 
   + \frac 12\norma{\nabla \sigma}^2 \Big)
   + \frac58 \norma{\fhi_t}^2_V
   + \frac58 \norma{\sigma_t}^2\\
 \label{key:11b}
 & \quad 
  \leq c + c \| \fhi_t \|_*^2
   + c \| \sigma_t \|_*^2
   + c \| \mu \|^2
   + c \big( 1 + \norma{\sigma}^2_V + \norma{\fhi}^2_{W^{2,4}(\Omega)} \big) \norma{\sigma}^2_V.
\end{align}
To close the estimate, we go back to \eqref{co:seg:15}. Neglecting some nonnegative terms
on the \lhs\ and performing standard manipulations, that relation implies
\begin{equation}\label{co:seg:15:2}
  \frac12 \ddt \| \sigma \|^2
   \le c \| \sigma \|^2
    + c\| \sigma \|_{{6}} \| \nabla \sigma \| \| \nabla \fhi \|_{{3}}
   \le c \big( 1 + \| \fhi \|_{H^2(\Omega)}^2 \big) \| \sigma \|^2_{V}.
\end{equation}
Adding this relation to \eqref{key:11b}, we recover the full $V$-norm of $\sigma$ 
on the \lhs. Next, we prove that the functional we get under time derivative is coercive.
In this direction, we employ \eqref{CH2}, Young's inequality and the uniform
boundedness of $S$ (in particular we can now use that $-1 \le \fhi \le 1$ 
almost everywhere) to infer that 
\begin{align}\notag
  - \io S(\fhi,\sigma) \mu 
   & = - \io S(\mu - \mu_\Omega) - \io S\mu_\Omega
  \geq - c \norma{\mu - \mu_\Omega}
  - c |\mu_\Omega|\\ 
 \no 
  & \geq - c \norma{\nabla \mu}- c |\mu_\Omega|
   \geq -\frac 18 \norma{\nabla \mu}^2 - c |\mu_\Omega| - c \\ 
 \no
  & \geq - \frac 18 \norma{\nabla \mu}^2  
  - c \big( \| F_1'(\fhi) \|_{1} 
     + \| \fhi \|_{1} 
     + \| \sigma \|_{1} \big)\\
 \label{proof:Sdt}
  & \geq - \frac 18 \norma{\nabla \mu}^2  
  - c \| F_1'(\fhi) \|_{1} 
  - c.
\end{align}
Next, testing once more \eqref{CH2b} by $\fhi-\fhi\OO$ and proceeding similarly with
\eqref{co:16}-\eqref{co:18}, we arrive at
\begin{equation}\label{co:18xx}   
  \alpha \| F_1'(\fhi) \|_{{1}} 
   \le c \big( 1 + \| \nabla \mu \| \big),
\end{equation}
where ${\alpha>0}$ is as in \eqref{co:17} and
$c>0$ on the \rhs\ also depends on other quantities that have
already been controlled uniformly with respect to time.
As a consequence, from \eqref{key:11b}-\eqref{co:seg:15:2}, we deduce the 
differential inequality
\begin{align} \no
  & \ddt \underbrace{\Big( \frac 12\norma{\nabla \mu}^2 
   - \io S(\fhi,\sigma) \mu 
   + \frac 12\norma{\sigma}_V^2 \Big)}_{=: {\cal J}}
   + \frac58 \norma{\sigma_t}^2
   \leq c 
   + c \| \fhi_t \|_*^2
   + c \| \sigma_t \|_*^2
   + c \| \mu \|^2\\
  \label{key:11c}
  & \quad\qquad
   + c \big( 1 + \norma{\sigma}^2_V + \norma{\fhi}^2_{W^{2,4}(\Omega)}
   + \norma{\fhi}^2_{H^{2}(\Omega)}\big) \norma{\sigma}^2_V,
\end{align}
where the functional $\calJ$, thanks to \eqref{proof:Sdt}-\eqref{co:18xx}, satisfies
\begin{align*}\no
  \calJ 
  & {\geq \frac 38 \norma{\nabla \mu}^2 
   - c \| \nabla \mu \|
   - c
   + \frac 12\norma{\sigma}_V^2 }
  \geq \frac 14 \norma{\nabla \mu}^2  
   + \frac 12\norma{\sigma}_V^2 - C,
\end{align*}
and $C>0$ depends only on quantities that have already been controlled
uniformly in time. Hence, for $C>0$ as above, \eqref{key:11c} can be 
rewritten in the form
\begin{align*} \no
  & \ddt ( \calJ + C) 
   + \frac58 \norma{\fhi_t}^2_V
   + \frac58 \norma{\sigma_t}^2\\
  & \quad \quad
   \leq c 
   + c \| \fhi_t \|_*^2
   + c \| \sigma_t \|_*^2
   + c \| \mu \|^2
   + c \big( 1 + \norma{\sigma}^2_V + \norma{\fhi}^2_{W^{2,4}(\Omega)}
   + \norma{\fhi}^2_{H^{2}(\Omega)}\big) (\calJ + C).
\end{align*}
Then, recalling \eqref{reg:4} and \eqref{reg:2d:1}-\eqref{reg:2d:2},
an application of Gr\"onwall's lemma yields the estimate
\begin{align*}
 \norma{\fhi}_{\H1 V }
  + \norma{\nabla \mu}_{\L\infty H}
  + \norma{\sigma}_{\H1 H \cap \L\infty V}
  \leq c,
\end{align*} 
provided that the functional $\calJ$ is finite at the initial time,
and we actually note that this follows, at least formally, from~\eqref{init:reg}. 

Next, using the control on the mean value of $\mu$ resulting from \eqref{co:18xx}, 
it is a standard matter to infer that
\begin{align*}
	\norma{\mu }_{\L\infty V}\leq c.
\end{align*}
In turn, this also allows us to improve the regularity of $\fhi$. Indeed,
we may go back to relation \eqref{CH2:ell} and notice that, now, the 
\rhs\ lies in the space $\LIV$. Then, arguing once more as in~\cite[Lemmas~7.3 and 7.4]{GGW},
we deduce that
\begin{align}
\label{fhi:Linftyq}
  \norma{F_1'(\fhi)}_{L^\infty(0,T;L^q(\Omega))} 
   + \norma{\fhi}_{\L\infty {W^{2,q}(\Omega)}} \leq c,
\end{align}
where $q=6$ if $d=3$ and $q\in [1,\infty)$ if $d=2$.
Finally, by a comparison of terms in \eqref{CH1b}, it is easy to check
that
\begin{align*}
  \norma{ \fhi_t}_{\L\infty \Vp} \leq c,
\end{align*}
whereas, applying elliptic regularity in \eqref{nutrb}, one can 
easily deduce
\begin{align*}
  \norma{ \sigma }_{L^2(0,T;H^2(\Omega))} \leq c.
\end{align*}
Noting that the continuity property in \eqref{reg:strong:2d:3} is, once more, a consequence of
standard regularity results, this concludes the proof of the theorem.
\end{proof}
\begin{proof}[Proof of Theorem~\ref{THM:REG:2}]
First of all, proceeding as in \cite[Lemmas~7.3 and 7.4]{GGW} and 
using the growth condition \eqref{F:growth} with the 
Trudinger--Moser inequality (see also \cite{NSY}),
we deduce
\begin{align}\label{proof:2d:F}
  \norma{F''_1(\fhi)}_{\L\infty {\Lx{q}}} \leq c,
\end{align}
for any $q \in [1,\infty)$.
This acts as a starting point to prove the additional regularity in the statement. 
As before, we proceed formally to avoid unnecessary technicalities, noting that rigorous estimates 
could be performed, e.g., by working on a time discrete level as done in \cite{GGW}.
In this direction, we differentiate in time \eqref{CH1b} 
(where, we recall, $\mobm\equiv 1$), to find
\begin{equation*}
   \fhi_{tt} = \Delta \mu_t + (S(\fhi,\sigma))_t = \Delta \mu_t - m \fhi_t 
     + \partial_\fhi h(\fhi,\sigma) \fhi_t
     + \partial_\sigma h(\fhi,\sigma) \sigma_t.
\end{equation*}
Then, we test the above equation by $\fhi_t$. Integrating by parts 
and using the Lipschitz continuity of $h$, we deduce
\begin{equation}\label{re2:12}
  \frac 12 \ddt \norma{\fhi_t}^2 
   + \io \nabla \mu_t \cdot \nabla \fhi_t 
   \le c \big( \| \fhi_t \|^2 + \| \sigma_t \|^2 \big).
\end{equation}
Next, differentiating \eqref{CH2b} in time and testing 
the result by $-\Delta \fhi_t$, we get
\begin{equation}\label{re2:13}
  \io \nabla \mu_t \cdot \nabla \fhi_t 
   = \norma{\Delta \fhi_t}^2
   - \io F_1''(\fhi)\fhi_t \Delta \fhi_t
   - \lambda \norma{\nabla \fhi_t}^2
   + \chi \int_\Omega \sigma_t \Delta \fhi_t.
\end{equation}
Combining \eqref{re2:12} with \eqref{re2:13} and performing standard
manipulations, it is easy to get
\begin{equation}\label{re2:14}
  \frac 12 \ddt \norma{\fhi_t}^2 
   + \frac12 \norma{\Delta \fhi_t}^2
   \le c \big( \| \fhi_t \|_V^2 + \| \sigma_t \|^2 \big)
   + \io F_1''(\fhi)\fhi_t \Delta \fhi_t
\end{equation}
and the last term can be controlled as follows:
\begin{align}\no
  \io F_1''(\fhi)\fhi_t \Delta \fhi_t
   & \le \| F_1''(\fhi) \|_{4}
    \| \fhi_t \|_{4} \| \Delta \fhi_t \| \\
 \label{re2:15}
  & \le c \| F_1''(\fhi) \|_{{4}}^2 \| \fhi_t \|_{V}^2 
     + \frac14 \| \Delta \fhi_t \|^2
     \le c \| \fhi_t \|_{V}^2 
     + \frac14 \| \Delta \fhi_t \|^2, 
\end{align}
the last inequality following from \eqref{proof:2d:F}. Hence, replacing 
\eqref{re2:15} into \eqref{re2:14}, using the known regularity properties
\eqref{reg:strong:2d:1} and \eqref{reg:strong:2d:3}, we deduce
\begin{equation}\label{re2:16}
  \| \fhi_t \|_{\LIH} + \| \fhi_t \|_{\LDHD} \le c,
\end{equation}
provided $\fhi_t$ lies in $H$ at the initial time. As before, 
this property has to be read by formally evaluating \eqref{CH1b}
at the time $t=0$. Then, by a direct check one can verify that 
this corresponds exactly to the condition on 
$\mu_0$ postulated in \eqref{init:reg:2}.

Then, viewing \eqref{CH1b} as a family of time-dependent 
elliptic equations whose \rhs s  lie in $\LIH \cap L^2(0,T;V)$
due to \eqref{re2:16}, \eqref{reg:strong:2d:3}
and the Lipschitz continuity of $h$, we deduce
\begin{equation*}
  \| \mu \|_{L^\infty(0,T;H^2(\Omega))} 
     + \| \mu \|_{L^2(0,T;H^3(\Omega))} \le c.
\end{equation*}
Note that the above, by Sobolev's embeddings, also gives
\begin{align}\label{mu:infty}
   \norma{\mu}_{L^\infty (Q)} \leq c.
\end{align}
Next, to improve the regularity of $\fhi$, we rewrite
\eqref{CH2b} as 
\begin{equation*}
  - \Delta \fhi = \mu - F_1'(\fhi) + \lambda\fhi + \chi\sigma.
\end{equation*}
Then, recalling \eqref{fhi:Linftyq} and \eqref{proof:2d:F}, a simple check permits us to
verify that the above \rhs\ lies (at least) in $\LIV$. Hence, by
elliptic regularity we deduce also 
\begin{align}\label{fhiinH3}
   \norma{\fhi}_{L^\infty(0,T;H^3(\Omega))} \leq c.
\end{align}
Next, to get the $L^\infty$-bound of $\sigma$, we
come back to \eqref{nutrb}, which, rearranging, can be rewritten as
\begin{equation*}
  \sigma_t - \Delta \sigma 
    = - \chi (\nabla \sigma \cdot \nabla \fhi + \sigma \Delta \fhi ) 
      + \beta(\fhi)( \kappa_0\sigma - \kappa_\infty \sigma^2 ) =: G.
\end{equation*}
We now claim that $G \in \L\infty  H$. To check this, we consider only the 
two cross-diffusion terms, the other ones being simpler to deal with.
Indeed, we first observe that
\begin{align*}
   \norma{\nabla \sigma \cdot \nabla \fhi}_{\L\infty H} 
   \leq c \norma{\nabla \sigma }_{\L\infty H} \norma{\nabla \fhi }_{L^\infty(Q)}
   \leq c,
\end{align*}
thanks to \eqref{fhiinH3}, \eqref{reg:strong:2d:3} and Sobolev's embeddings.
Analogously,
\begin{align*}
  \norma{\sigma \Delta \fhi}_{\L\infty H} 
   \leq c \norma{\sigma }_{\L\infty {\Lx4}} \norma{\Delta \fhi }_{\L\infty {\Lx4}}
   \leq c.
\end{align*}
Then, recalling the assumption \eqref{init:sigmareg} on the initial datum,
by an application of \cite[Thm.~7.1, p.~181]{LSU} we readily obtain \eqref{sig:infty}.
Finally, the above regularity allows us to obtain the separation property \eqref{separation}.
To this aim, we go back to the expression \eqref{CH2c} 
and notice that the \rhs, thanks to \eqref{mu:infty}, \eqref{fhiinH3} and \eqref{sig:infty}, is 
now bounded in the $L^\infty(Q)$-norm. Hence, \eqref{separation} 
can be obtained by reasoning exactly as in the proof of \cite[Thm.~2.2]{CSS}.
This concludes the proof.
\end{proof}
\beos\label{su:rego}
With the separation property \eqref{separation} at disposal, the singular character
of $F_1'$ at $\pm 1$ is essentially lost and the term $F_1'(\fhi)$ in \eqref{CH2b} behaves 
like a smooth function of $\fhi$ with controlled growth. Thanks to this fact,
the regularity of solutions may be further improved at least as far as 
the nonlinear terms (like $h$, or $F_1$ itself) satisfy additional
regularity properties (e.g., $C^k$ for large $k$).
\eddos


\section{Uniqueness of strong solutions}
\label{SEC:UQ}

This section is devoted to the proof of Theorem~\ref{THM:UNIQ:2d}.
We first recall that in Subsection~\ref{SEC:NOT} was introduced the 
operator $\N:V_0'\to V_0$ representing, in a suitable weak sense, 
the inverse of (minus) the Neumann Laplacian acting on the 
functions with zero spatial average. 
Moreover, as anticipated in Remark~\ref{oss:altuniq},
we just consider the case $d=3$, noting that the conditions may be relaxed
in the two-dimensional setting.

\begin{proof}[Proof of Theorem \ref{THM:UNIQ:2d}]
Let us assume to have a couple of solutions $(\fhi_1,\mu_1,\sigma_1)$ 
and $(\fhi_2,\mu_2,\sigma_2)$ fulfilling the assumptions of the theorem and 
let us correspondingly set
\begin{align}\notag
  \fhi & := \fhi_1-\fhi_2, \qquad 
   \mu := \mu_1-\mu_2, \qquad 
   \sigma  := \sigma_1-\sigma_2,\\
 \label{not:diff}
   S_i & := S(\fhi_i ,\sigma_i)~~\text{for $i=1,2,$}
 \qquad \fhi_0  := \fhi_{0,1}-\fhi_{0,2},
 \qquad \sigma_0  := \sigma_{0,1}-\sigma_{0,2}.
\end{align}
Then, under the assumptions of the 
theorem the triplet $(\fhi,\mu,\sigma)$ turns out to solve the system
\begin{alignat}{2}\label{CH1:cd}
  & \fhi_t = \Delta \mu + (S_1-S_2) && \qquad \text{ in }\, Q,\\
 \label{CH2:cd}
  & \mu = - \Delta \fhi + \big(f(\fhi_1)-f(\fhi_2)\big) - \chi\sigma&& \qquad \text{ in }\, Q,\\
 \label{nutr:cd}
  & \sigma_t - \Delta \sigma + \chi \dive (\sigma \nabla \fhi_1 + \sigma_2 \nabla \fhi )
    = \kappa_0 \sigma - \kappa_\infty \sigma (\sigma_1 + \sigma_2) && \qquad \text{ in }\, Q,\\
 \label{BC:cd}
  & \dn \fhi = \dn \mu = \dn \sigma = 0&& \qquad \text{ on } \Sigma,\\
 \label{init:cd}
  & \fhi|_{t=0} = \fhi_0, \quad
 \sigma|_{t=0} = \sigma_0 
  &&\qquad \text{ in } \Omega.
\end{alignat}
We then start by integrating \eqref{CH1:cd} over $\Omega$ to find that
\begin{align}\label{uq:1}
  \fhi_\Omega' = ( S_1 - S_2 )\OO
   = \frac 1 {|\Omega|} \io \big(S(\fhi_1,\sigma_1)-S(\fhi_2,\sigma_2)\big).
\end{align}
Testing the above by $\fhi_\Omega$ and using Young's inequality 
with the Lipschitz continuity of $S$, we easily deduce
\begin{align}\label{uq:2}
  \frac 12 \ddt |\fhi_\Omega|^2 \leq |\fhi_\Omega|^2  + c (\norma{\fhi}^2+\norma{\sigma}^2).
\end{align}
Next, we subtract \eqref{uq:1} from \eqref{CH1:cd} and test the resulting 
equality by $\N(\fhi - \fhi_\Omega)$ obtaining that
\begin{align} \notag
  & \frac 12 \ddt \norma{\fhi - \fhi_\Omega}^2_*
    + \io (\fhi- \fhi_\Omega)(\mu -\mu_\Omega)
   = \io \big( (S_1-S_2) - (S_1 - S_2)_\Omega \big) \, \N(\fhi-\fhi_\Omega)\\ 
 \label{uq:3}
  & \quad \leq c \norma{\fhi}^2
   + c \norma{\sigma}^2 
   + c \norma{\fhi - \fhi_\Omega}^2_*.
\end{align}
Let us now point out that, by the Poincar\'e--Wirtinger inequality and some
elementary interpolation, 
\begin{align} \no
  c \| \fhi \|^2 
   & \le c \big( \| \fhi- \fhi\oo \|^2 + |\fhi\oo|^2 \big)
   \le c \| \fhi- \fhi\oo \|_V \| \fhi- \fhi\oo \|_* + c|\fhi\oo|^2 \\
 \label{uq:3s}
   & \le \delta \| \nabla \fhi\|^2 + c_\delta \| \fhi- \fhi\oo \|_*^2 + c|\fhi\oo|^2 
\end{align}
for ``small'' $\delta>0$ and correspondingly ``large'' $c_\delta>0$. 

Next, noting that $\io \mu_\Omega(\fhi - \fhi_\Omega) = 0$, we may
use \eqref{CH2:cd} to obtain that
\begin{align} \label{uq:4}
  \io (\fhi- \fhi_\Omega)(\mu -\mu_\Omega)
   = \norma{\nabla \fhi}^2
    + \io (\fhi - \fhi_\Omega)(f(\fhi_1) - f (\fhi_2)) 
    - \chi \io (\fhi - \fhi_\Omega) \sigma.
\end{align}
Using also \eqref{uq:3s}, we deduce 
\begin{equation}\label{uniq:11}
  \chi \bigg| \io (\fhi - \fhi_\Omega) \sigma \bigg|
   \leq c \norma{\sigma}^2 + c \norma{\fhi-\fhi_\Omega}^2  
   \leq \d \norma{\nabla \fhi}^2 + \cd \norma{\fhi-\fhi_\Omega}_*^2 +c \norma{\sigma}^2,
\end{equation}
for $\delta>0$ and $c_\delta>0$ as above. 

Summing \eqref{uq:2} with \eqref{uq:3} and using \eqref{uq:4}, \eqref{uniq:11}
and \eqref{uq:3s} again, we deduce
\begin{align*} \notag
  & \frac 12 \ddt \big( \norma{\fhi - \fhi_\Omega}^2_* + | \fhi\OO |^2 \big)
    + \io (\fhi - \fhi_\Omega) (f(\fhi_1) - f (\fhi_2)) 
    + (1 - 2\delta) \| \nabla \fhi \|^2\\
  & \quad\quad \leq c_\delta \big( \norma{\fhi - \fhi_\Omega}^2_* 
    + \norma{\sigma}^2 + |\fhi\OO|^2 \big).
\end{align*}
Next, decomposing $f$ into its monotone and remainder parts, 
it is not difficult to get
\begin{align} \notag
  &    \frac 12 \ddt \big( \norma{\fhi - \fhi_\Omega}^2_* + | \fhi\OO |^2 \big)
    + (1 - 3 \delta) \| \nabla \fhi \|^2\\
 \label{uq:3c}
  & \quad\quad \leq c_\delta \big( \norma{\fhi - \fhi_\Omega}^2_* + \norma{\sigma}^2 + |\fhi\OO|^2 \big)
     + \io (F_1'(\fhi_1) - F_1'(\fhi_2)) \fhi_\Omega.
\end{align}

\smallskip

\noindent%
We now move to the estimation of $\sigma$. 
Integrating \eqref{nutr:cd} over $\Omega$ we obtain
\begin{equation} \label{uniq:a1}
  (\sigma\oo)_t = \kappa_0 \sigma\oo - \kappa_\infty \big( \sigma_1^2 - \sigma_2^2 \big)\oo.
\end{equation}
Subtracting the above from \eqref{nutr:cd}, we then get
\begin{equation} \label{uniq:a2}
  (\sigma - \sigma\oo)_t 
   - \Delta \sigma 
    + \chi \dive (\sigma \nabla \fhi_1 + \sigma_2 \nabla \fhi )
    = \kappa_0 ( \sigma - \sigma\oo)
    - \kappa_\infty \big( \sigma_1^2 - \sigma_2^2 - (\sigma_1^2)\oo + (\sigma_2^2)\oo \big).
\end{equation}
Testing \eqref{uniq:a1} by $\sigma\oo$, it is not difficult to obtain
\begin{align} \notag
    \frac12 \ddt |\sigma\oo|^2  
    & \le \kappa_0 |\sigma\oo|^2
     + \frac{\kappa_\infty}{|\Omega|} | \sigma\oo | \io | \sigma | |\sigma_1 + \sigma_2|\\
 \no
    & \le \kappa_0 |\sigma\oo|^2 
     + c| \sigma\oo | \big( \| \sigma - \sigma\oo \| + | \sigma\oo | \big) \| \sigma_1 + \sigma_2 \| \\
 \label{uniq:a3}
   & \le \eta \| \sigma - \sigma\oo \|^2 
     + c_\eta \big( 1 + \|\sigma_1 \|^2 + \| \sigma_2 \|^2 \big) | \sigma\oo |^2,
\end{align}
where $\eta>0$ denotes a positive constant whose value will be 
fixed at the end.
Next, we test \eqref{uniq:a2} by $\calN ( \sigma - \sigma\oo )$ to deduce that
\begin{align} \notag
  & \frac 12 \ddt \norma{\sigma - \sigma_\Omega}^2_* 
   + \| \sigma - \sigma_\Omega \|^2 
   \leq \chi \io \sigma \nabla \fhi_1 \cdot \nabla \calN (\sigma - \sigma_\Omega)
   + \chi \io \sigma_2 \nabla \fhi \cdot \nabla \calN (\sigma - \sigma_\Omega)\\
 \label{uniq:a4}
  & \quad\quad + \kappa_0 \norma{\sigma - \sigma_\Omega}^2_*
  - \kappa_\infty \io \big( \sigma_1^2 - \sigma_2^2 - (\sigma_1^2)\oo + (\sigma_2^2)\oo \big) 
   \calN (\sigma - \sigma\oo).
\end{align}
As for the \rhs, we first notice that 
\begin{align} \no
  & \chi \io \sigma \nabla \fhi_1 \cdot \nabla \calN (\sigma - \sigma_\Omega)
   \le c \| \sigma \| \| \nabla \fhi_1 \|_{\infty} \| \nabla \calN (\sigma - \sigma_\Omega) \|\\
  \no
   & \qquad\quad
    \le c \| \sigma \| \| \nabla \fhi_1 \|_{\infty} \| \sigma - \sigma_\Omega \|_*
    \le \eta \| \sigma \|^2
   +  c_\eta \| \nabla \fhi_1 \|^2_{\infty} \| \sigma - \sigma_\Omega \|_*^2\\
  \label{uniq:a5} 
   & \qquad\quad
    \le 2\eta \| \sigma - \sigma\oo \|^2 + c\eta |\sigma\oo|^2
    + c_\eta \| \fhi_1 \|^2_{W^{2,6}(\Omega)} \| \sigma - \sigma_\Omega \|_*^2.
\end{align}
To control the second integral, several strategies are possible,
leading to different assumptions on $\sigma_2$. Under the conditions in the statement,
we may proceed by noting that
\begin{align} \no
  & \chi \io \sigma_2 \nabla \fhi \cdot \nabla \calN (\sigma - \sigma_\Omega)
   \le c \| \sigma_2 \|_{6} \| \nabla \fhi \| 
       \| \nabla \calN (\sigma - \sigma_\Omega) \|_{{3}} \\
 \no
  & \qquad\quad \le c \| \sigma_2 \|_{6} \| \nabla \fhi \| 
       \| \calN (\sigma - \sigma_\Omega) \|_{V}^{1/2}
           \| \calN (\sigma - \sigma_\Omega) \|_{H^2(\Omega)}^{1/2}\\
 \no
  & \qquad\quad \le c \| \sigma_2 \|_{6} \| \nabla \fhi \| 
       \| \sigma - \sigma_\Omega \|_{*}^{1/2}
           \| \sigma - \sigma_\Omega \|^{1/2}\\
 \label{uniq:a6b} 
  & \qquad\quad \le \eta \| \nabla \fhi \|^2
   + \eta \| \sigma - \sigma_\Omega \|^2
  + c_\eta \| \sigma_2 \|_{6}^4 \| \sigma - \sigma_\Omega \|_{*}^2.
\end{align}
Next, we move to the last term in \eqref{uniq:a4}, which can be treated as 
follows:
\begin{align} \no
  & - \kappa_\infty \io \big( \sigma_1^2 - \sigma_2^2 - (\sigma_1^2)\oo + (\sigma_2^2)\oo \big) 
   \calN (\sigma - \sigma\oo)\\
 \no  
  & \qquad\quad
    \le c \big\| \sigma_1^2 - \sigma_2^2 - (\sigma_1^2)\oo + (\sigma_2^2)\oo \|_{1}
    \| \calN (\sigma - \sigma\oo) \|_{\infty}\\
  \no
   & \qquad\quad
    \le c \| \sigma \| \big( \| \sigma_1 \| + \| \sigma_2 \| \big)
    \| \calN(\sigma - \sigma\oo) \|_{V}^{1/2} \| \calN(\sigma - \sigma\oo) \|_{H^2(\Omega)}^{1/2}\\
  \no
   & \qquad\quad
    \le c \big( \| \sigma - \sigma\oo \| + | \sigma\oo | \big)
       \big( \| \sigma_1 \| + \| \sigma_2 \| \big)
    \| \sigma - \sigma\oo \|_{*}^{1/2} \| \sigma - \sigma\oo \|^{1/2}\\
  \label{uniq:a7} 
   & \qquad\quad
    \le c_\eta \big( 1 + \| \sigma_1 \|^4 + \| \sigma_2 \|^4 \big)
    \| \sigma - \sigma\oo \|_{*}^2 + \eta \| \sigma - \sigma\oo \|^2
      + c | \sigma\oo |^2.
\end{align}
Next, we replace \eqref{uniq:a5}, \eqref{uniq:a6b} and
\eqref{uniq:a7} into \eqref{uniq:a4} to deduce that
\begin{align*} \notag
  & \frac 12 \ddt \norma{\sigma - \sigma_\Omega}^2_* 
   + ( 1 - 4 \eta )\| \sigma - \sigma_\Omega \|^2 \\
  & \quad\quad  
   \le c_\eta \big( 1 + \| \sigma_1 \|^4 + \| \sigma_2 \|_{6}^4 
           + \| \fhi_1 \|^2_{W^{2,6}(\Omega)} \big)\| \sigma - \sigma_\Omega \|_*^2
    + \eta \| \nabla \fhi \|^2 + c | \sigma\oo |^2.
\end{align*}
Adding \eqref{uniq:a3} to the above relation gives
\begin{align} \notag
  & \frac12 \ddt \big( \norma{\sigma - \sigma_\Omega}^2_* + |\sigma\oo|^2  \big)
   + ( 1 - 5 \eta )\| \sigma - \sigma_\Omega \|^2
   \le c_\eta \big( 1 + \|\sigma_1 \|^2 + \| \sigma_2 \|^2 \big) | \sigma\oo |^2 \\
 \label{uniq:b2b}
  & \quad\quad  
   + c_\eta \big( 1 +  \| \sigma_1 \|^4 + \| \sigma_2 \|_{6}^4
           + \| \fhi_1 \|^2_{W^{2,6}(\Omega)} \big)\| \sigma - \sigma_\Omega \|_*^2
    + \eta \| \nabla \fhi \|^2.
\end{align}
We then take $\delta=1/6$ in \eqref{uq:3c} and multiply that relation by $\zeta>0$
to be chosen below. Finally, we add the result to \eqref{uniq:b2b}. This yields
\begin{align*} \notag
  & \frac12 \ddt \big( \norma{\sigma - \sigma_\Omega}^2_* + |\sigma\oo|^2
   + \zeta \norma{\fhi - \fhi_\Omega}^2_* + \zeta | \fhi\OO |^2 \big)
   + \frac\zeta2 \| \nabla \fhi \|^2
   + ( 1 - 5 \eta )\| \sigma - \sigma_\Omega \|^2\\
 \no
  & \mbox{}~~~~~ \le c_\eta \big( 1 + \|\sigma_1 \|^2 + \| \sigma_2 \|^2 \big) | \sigma\oo |^2 \\
 \no
  & \mbox{}~~~~~\quad + c_\eta \big( 1  + \| \sigma_1 \|^4 + \| \sigma_2 \|_{6}^4 
           + \| \fhi_1 \|^2_{W^{2,6}(\Omega)} \big)\| \sigma - \sigma_\Omega \|_*^2
    + \eta \| \nabla \fhi \|^2\\
  & \mbox{}~~~~~\quad + c \zeta \norma{\fhi - \fhi_\Omega}^2_* 
    + c_1 \zeta \norma{\sigma - \sigma\oo}^2 + c \zeta | \sigma\oo |^2
    + c \zeta |\fhi\OO|^2
     + \zeta \bigg| \io (F_1'(\fhi_1) - F_1'(\fhi_2)) \fhi_\Omega \bigg|,
\end{align*}
where $c_1>0$ is a computable constant independent of $\zeta$ and $\eta$.
Then, choosing first $\zeta\le \min\{1,1/2c_1\}$, we arrive at
\begin{align*} \notag
  & \frac12 \ddt \big( \norma{\sigma - \sigma_\Omega}^2_* + |\sigma\oo|^2
   + \zeta \norma{\fhi - \fhi_\Omega}^2_* + \zeta | \fhi\OO |^2 \big)
   + \frac\zeta2 \| \nabla \fhi \|^2
   + \Big( \frac12 - 5 \eta \Big )\| \sigma - \sigma_\Omega \|^2\\
 \no
  & \mbox{}~~~~~ \le c_\eta \big( 1 + \|\sigma_1 \|^2 + \| \sigma_2 \|^2 \big) | \sigma\oo |^2 \\
 \no
  & \mbox{}~~~~~\quad
    + c_\eta \big( 1 + \| \sigma_1 \|^4 + \| \sigma_2 \|_{6}^4
           + \| \fhi_1 \|^2_{W^{2,6}(\Omega)} \big)\| \sigma - \sigma_\Omega \|_*^2
    + \eta \| \nabla \fhi \|^2\\
  & \mbox{}~~~~~\quad
    + c \norma{\fhi - \fhi_\Omega}^2_* 
    + c | \sigma\oo |^2
    + c |\fhi\OO|^2
    + \bigg| \io (F_1'(\fhi_1) - F_1'(\fhi_2)) \fhi_\Omega \bigg|.
\end{align*}
Next, choosing $\eta\le \min\{1/20,\zeta/4\}$, we get
\begin{align} \notag
  & \frac12 \ddt \big(\norma{\sigma - \sigma_\Omega}^2_* + |\sigma\oo|^2
   + \zeta \norma{\fhi - \fhi_\Omega}^2_* + \zeta | \fhi\OO |^2 \big)
   + \frac\zeta4 \| \nabla \fhi \|^2
   + \frac14 \| \sigma - \sigma_\Omega \|^2\\
 \no
  & \mbox{}~~~~~
   \le c \big( 1 + \|\sigma_1 \|^2 + \| \sigma_2 \|^2 \big) | \sigma\oo |^2
    + c \big( 1 + \| \sigma_1 \|^4 + \| \sigma_2 \|_{6}^4 
           + \| \fhi_1 \|^2_{W^{2,6}(\Omega)} \big)\| \sigma - \sigma_\Omega \|_*^2\\
 \label{uniq:b5b}
  &  \mbox{}~~~~~~~~~~
    + c \norma{\fhi - \fhi_\Omega}^2_* 
    + c | \sigma\oo |^2
    + c |\fhi\OO|^2
    + \bigg| \io (F_1'(\fhi_1) - F_1'(\fhi_2)) \fhi_\Omega \bigg|.
\end{align}
To obtain a contractive estimate, we need to manage the last term. This is
treated in two different ways depending on the assumption on $h$. Indeed,
if $h$ is a constant, we may proceed as in \cite{GGM} since in that
case the ODE relation \eqref{uq:1} reduces to 
\begin{align}\label{uq:1b}
  \fhi_\Omega' + m \fhi\OO = 0.
\end{align}
For constant $h$ we may then proceed by noting that 
\begin{equation} \label{uniq:22}
  \bigg| \io (F_1'(\fhi_1) - F_1'(\fhi_2)) \fhi_\Omega \bigg|
   \leq \big( \norma{F_1'(\fhi_1)}_{1} + \norma{F_1'(\fhi_2)}_{1} \big)
    |\fhi_\Omega|.
\end{equation}
Then, testing \eqref{uq:1b} by the sign of $\fhi\OO$
(this standard procedure may be also justified by approximation), 
summing the result to \eqref{uniq:b5b},
and using \eqref{uniq:22}, we deduce 
\begin{align*} \notag
  & \frac12 \ddt \big( \norma{\sigma - \sigma_\Omega}^2_* + |\sigma\oo|^2
   + \zeta \norma{\fhi - \fhi_\Omega}^2_* + \zeta | \fhi\OO |^2 + |\fhi\oo| \big)
   + \frac\zeta4 \| \nabla \fhi \|^2
   + \frac14 \| \sigma - \sigma_\Omega \|^2\\
 \no
  & \mbox{}~~~~~
   \le c \big( 1 + \|\sigma_1 \|^2 + \| \sigma_2 \|^2 \big) | \sigma\oo |^2
    + c \big( 1 + \| \sigma_1 \|^4 + \| \sigma_2 \|_{6}^4 
           + \| \fhi_1 \|^2_{W^{2,6}(\Omega)} \big)\| \sigma - \sigma_\Omega \|_*^2\\
  &  \mbox{}~~~~~~~~~~
    + c \norma{\fhi - \fhi_\Omega}^2_* 
    + c |\fhi\OO|^2
    + \big( \norma{F_1'(\fhi_1)}_{1} + \norma{F_1'(\fhi_1)}_{1} \big)|\fhi_\Omega|,
\end{align*}
where we recall that $\zeta$ is a positive constant whose value has already been fixed. 
Next, we observe that, in view of \eqref{reg:5} and \eqref{cond:un1}-\eqref{cond:un3},
we may apply Gr\"onwall's lemma, which gives the statement (and, more 
generally, the continuous dependence estimate \eqref{cont:dep1}).
This concludes the analysis of the first case.

On the other hand, when $h$ is nonlinear, it does not seem to 
be possible to proceed as above. For this reason, we need to 
provide a different control of the last integral term in \eqref{uniq:b5b}.
Namely, we may first notice that a simple computation shows, as we are assuming $F \in C^2(-1,1)$, that
\begin{equation*}
  F_1'(\fhi_1) - F_1'(\fhi_2)
   = \ell \fhi, \quext{with}~~
  \ell = \int_0^1 F_1''( s\fhi_1 + (1-s) \fhi_2)\,\dis.
\end{equation*}
Consequently, by the Young, H\"older and Poincar\'e--Wirtinger inequalities we infer 
\begin{align*} \no
  & \bigg| \io (F_1'(\fhi_1) - F_1'(\fhi_2)) \fhi_\Omega \bigg| 
  = \bigg| \fhi\OO \io \ell \fhi \bigg| 
   \le | \fhi\OO | \| \fhi \| \| \ell \|\\
 \no
  & \quad\quad \le | \fhi\OO | \big( \| \fhi - \fhi\OO \| + |\fhi\OO| \big)  \| \ell \|
    \le c | \fhi\OO | \big( \| \nabla \fhi \| + |\fhi\OO| \big) \| \ell \|\\
  & \quad\quad\le \frac\zeta8 \| \nabla \fhi \|^2  
   + c \big( 1 + \| \ell \|^2 \big) |\fhi\OO|^2 
  \le \frac\zeta8 \| \nabla \fhi \|^2  + c |\fhi\OO|^2 
   \big( 1 + \| F_1''(\fhi_1) \|^2 + \| F_1''(\fhi_2) \|^2 \big), 
\end{align*}
where the value of $\zeta$ was assigned before (and the last constants
$c>0$ also depend on it).
Replacing this into \eqref{uniq:b5b}, we then get 
\begin{align*} \notag
  & \frac12 \ddt \big(\norma{\sigma - \sigma_\Omega}^2_* + |\sigma\oo|^2
   + \zeta \norma{\fhi - \fhi_\Omega}^2_* + \zeta | \fhi\OO |^2 \big)
   + \frac\zeta8 \| \nabla \fhi \|^2
   + \frac14 \| \sigma - \sigma_\Omega \|^2\\
 \no
  & \mbox{}~~~~~
   \le c \big( 1 + \|\sigma_1 \|^2 + \| \sigma_2 \|^2 \big) | \sigma\oo |^2
    + c \big( 1 + \| \sigma_1 \|^4 + \| \sigma_2 \|_{6}^4 
           + \| \fhi_1 \|^2_{W^{2,6}(\Omega)} \big)\| \sigma - \sigma_\Omega \|_*^2\\
  &  \mbox{}~~~~~~~~~~
    + c \norma{\fhi - \fhi_\Omega}^2_* 
    + c |\fhi\OO|^2 
       \big( 1 + \| F_1''(\fhi_1) \|^2 + \| F_1''(\fhi_2) \|^2 \big).
\end{align*}
Once again, using also the additional assumption \eqref{cond:un4}, 
Gr\"onwall's lemma gives the thesis.

Finally, with reference to Remark~\ref{oss:altuniq}, we
notice that the second integral on the \rhs\ of \eqref{uniq:a4} 
can alternatively managed in the following way employing H\"older's inequality:
\begin{align*} \no
  & \chi \io \sigma_2 \nabla \fhi \cdot \nabla \calN (\sigma - \sigma_\Omega)
   \le c \| \sigma_2 \|_{3+\eps} \| \nabla \fhi \| 
       \| \nabla \calN (\sigma - \sigma_\Omega) \|_{\frac{6+2\eps}{1+\eps}}\\
  & \qquad \le \delta \| \nabla \fhi \|^2 
    + c_\delta \| \sigma_2 \|_{L^\infty(0,T;L^{3+\eps}(\Omega))}^2
          \| \nabla \calN (\sigma - \sigma_\Omega) \|_{\frac{6+2\eps}{1+\eps}}^2,
\end{align*}
where $\eps>0$ is arbitrarily small (but fixed)
and the \rhs\ can be managed by using interpolation and accordingly adjusting the 
magnitude of the occurring constants as the interested reader may verify. 
\end{proof}

\bigskip
\noindent
\noindent
{\bf Acknowledgments.}~
This research has been performed in the framework of the MIUR-PRIN Grant 2020F3NCPX 
``Mathematics for industry 4.0 (Math4I4)''. The present paper also benefits from the support of 
the GNAMPA (Gruppo Nazionale per l'Analisi Matematica, la Probabilit\`a e le loro Applicazioni)
of INdAM (Istituto Nazionale di Alta Matematica).

\bigskip



%
%
%
%
%
%
%

\end{document}